\title{Repetitions of Pak-Stanley Labels in $G$-Shi Arrangements}
\author{Cara Bennett}
\address{School of Mathematics, Georgia Tech}
\email{\textcolor{blue}{\href{mailto:cara.bennett@gatech.edu}{cara.bennett@gatech.edu}}}
\author{Lucy Martinez}
\address{Department of Mathematics, Rutgers University}
\email{\textcolor{blue}{\href{mailto:lucy.martinez@rutgers.edu}{lucy.martinez@rutgers.edu}}}
\author{Ava Mock}
\address{Department of Mathematics, Wellesley College}
\email{\textcolor{blue}{\href{mailto:am6@wellesley.edu}{am6@wellesley.edu}}}
\author{Gordon Rojas Kirby}
\address{ Department of Mathematics and Statistics, San Diego State University}
\email{\textcolor{blue}{\href{mailto: gkirby@sdsu.edu}{gkirby@sdsu.edu}}}
\author{Robin Truax}
\address{Department of Mathematics, Stanford University}
\email{\textcolor{blue}{\href{mailto:truax@stanford.edu}{truax@stanford.edu}}}
\date{\today}
\begin{document}
\begin{abstract}
Given a simple graph $G$, one can define a hyperplane arrangement called the $G$-Shi arrangement. The Pak-Stanley algorithm labels the regions of this arrangement with $G_\bullet$-parking functions. When $G$ is a complete graph we recover the full Shi arrangement, and the Pak-Stanley labels give a bijection with ordinary parking functions. However, for proper subgraphs $G \subset K_n$, while the Pak-Stanley labels still include every $G_{\bullet}$-parking function, some appear more than once. These repetitions of Pak-Stanley labels are a topic of interest in the study of $G$-Shi arrangements and $G_{\bullet}$-parking functions. Furthermore, $G_{\bullet}$-parking functions are connected to many other combinatorial objects (for example, superstable configurations in chip-firing). In studying these repetitions, we can draw on existing results about these objects such as Dhar's Burning Algorithm. Conversely, our results have implications for the study of these objects as well. \\

The key insight of our work is the introduction of a combinatorial model called the Three Rows Game. Analyzing the histories of this game and the ways in which they can induce the same outcomes allows us to characterize the multiplicities of the Pak-Stanley labels. Using this model, we develop a classification theorem for the multiplicities of the Pak-Stanley labels of the regions in the $P_n$-Shi arrangement, where $P_n$ is the path graph on $n$ vertices. Then, we generalize the Three Rows Game into the $T$-Three Rows Game. This allows us to study the multiplicities of the Pak-Stanley labels of the regions in $T$-Shi arrangements, where $T$ is any tree. Finally, we discuss the possibilities and difficulties in applying our method to arbitrary graphs. In particular, we analyze multiplicities in the case when $G$ is a cycle graph, and prove a uniqueness result for maximal $G_{\bullet}$-parking functions for all graphs using the Three Rows Game.
\end{abstract}

\vspace{-1cm}
\maketitle
\vspace{-0.5cm}
\section{Introduction}
A \textit{hyperplane} in $\R^n$ is an affine subspace of dimension $n-1$. A \textit{hyperplane arrangement} is a collection of finitely many hyperplanes. 
Given a hyperplane arrangement $\mathcal{A}$, its complement $\R^n \setminus \bigcup_{H\in \mathcal{A}}H$ splits into connected components called \textit{regions}. 
The focus of this paper is on the combinatorial properties of the regions of the \textit{$G$-Shi arrangement} and their labels by $G_\bullet$-parking functions, as defined by Duval, Klivans, and Martin \cite{GShiDefinition}. The $G$-Shi arrangement $\mathscr{S}(G)$ is defined for any graph $G = (V,E)$ by
$$\mathscr{S}(G)=\{ x_i - x_j = 0,1 \mid \{i,j\}\in E \text{ with } i<j\}. $$
When $G = K_n$, the $G$-Shi arrangement is the \textit{Shi arrangement}, which has $(n+1)^{n-1}$ regions. See \cite{ShiSurvey_Fishel} for a survey of the Shi arrangement. 

Since the regions of the Shi arrangement are equinumerous with parking functions of length $n$, a natural problem is to find a bijection between regions of the Shi arrangement and parking functions of length $n$. Pak and Stanley (1996) and
Athanasiadis and Linusson (1999) gave two such bijections \cite{Stanley},\cite{ATHANASIADIS}. The Pak-Stanley algorithm, defined later, labels the regions of the $G$-Shi arrangement with $G_\bullet$-parking functions in such a way that every $G_\bullet$-parking function appears. When $G$ is complete, each $G_\bullet$-parking function appears exactly once as a Pak-Stanley label on a region of the $G$-Shi arrangement. If $G$ is not complete, some $G_\bullet$-parking functions appear more than once as Pak-Stanley labels in the $G$-Shi arrangement. Consider the examples of $G = K_3$ and $G = P_3$ in Figures \ref{fig:labeled-G-Shi-K_3} and \ref{fig:labeled-G-Shi-P_3} respectively. 

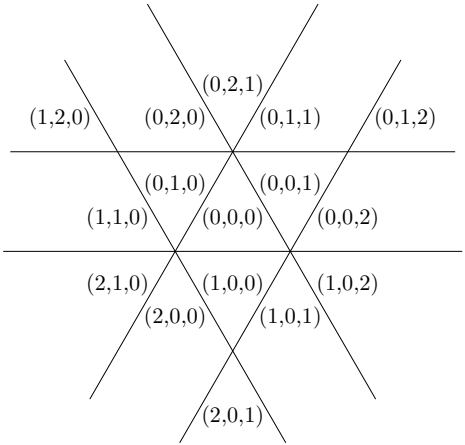
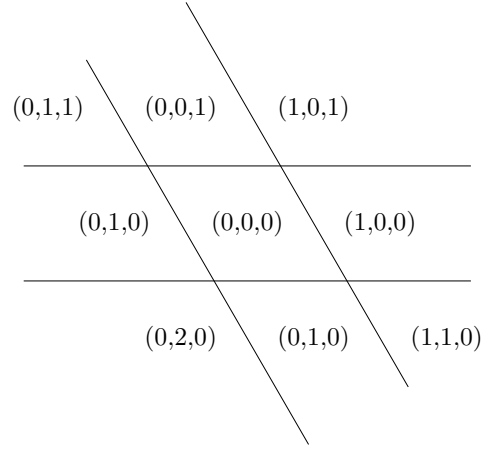
\begin{figure}[H]
    \centering
    \begin{subfigure}[b]{0.40\textwidth}
        \centering
        \resizebox{\linewidth}{!}{






\begin{tikzpicture}[]
\node[] (0) at (-1.561, -2.704) {};
\node[] (1) at (2.561, 4.436) {};
\node[] (2) at (0, -3.464) {};
\node[] (3) at (4, 3.464) {};
\node[] (4) at (-3, 1.732) {};
\node[] (5) at (5, 1.732) {};
\node[] (6) at (-3.123, 0) {};
\node[] (7) at (5.123, 0) {};
\node[] (8) at (-2, 3.464) {};
\node[] (9) at (2, -3.464) {};
\node[] (10) at (-0.561, 4.436) {};
\node[] (11) at (3.561, -2.704) {};

\node[] () at (1, 0.577) {(0,0,0)};
\node[] () at (0, 1.155) {(0,1,0)};
\node[] () at (2, 1.155) {(0,0,1)};
\node[] () at (1, -0.577) {(1,0,0)};

\node[] () at (-1, 0.577) {(1,1,0)}; 
\node[] () at (0, -1.155) {(2,0,0)}; 
\node[] () at (2, -1.155) {(1,0,1)}; 
\node[] () at (3, 0.577) {(0,0,2)}; 
\node[] () at (2, 2.309) {(0,1,1)}; 
\node[] () at (0, 2.309) {(0,2,0)}; 

\node[] () at (-1, -0.577) {(2,1,0)};
\node[] () at (1, -2.877) {(2,0,1)};
\node[] () at (3, -0.577) {(1,0,2)};
\node[] () at (4, 2.309) {(0,1,2)};
\node[] () at (1, 2.877) {(0,2,1)};
\node[] () at (-2, 2.309) {(1,2,0)};

\path [-](0) edge (1);
\path [-](2) edge (3);
\path [-](4) edge (5);
\path [-](6) edge (7);
\path [-](8) edge (9);
\path [-](10) edge (11);

\end{tikzpicture}}
        \caption{Labelling the $K_3$-Shi arrangement.}
        \label{fig:labeled-G-Shi-K_3}
    \end{subfigure}
    \hfill
    \begin{subfigure}[b]{0.40\textwidth}
        \centering
        \resizebox{\linewidth}{!}{\begin{tikzpicture}[]
\node[] (0) at (-3, 0) {};
\node[] (1) at (4, 0) {};
\node[] (2) at (-3, 1.732) {};
\node[] (3) at (4, 1.732) {};
\node[] (4) at (-2, 3.464) {};
\node[] (5) at (1.5, -2.598) {};
\node[] (6) at (-0.5, 4.33) {};
\node[] (7) at (3, -1.732) {};

\node[] () at (-2.5, 2.598) {(0,1,1)};
\node[] () at (-0.5, 2.598) {(0,0,1)};
\node[] () at (1.5, 2.598) {(1,0,1)};
\node[] () at (-1.5, 0.866) {(0,1,0)};
\node[] () at (0.5, 0.866) {(0,0,0)};
\node[] () at (2.5, 0.866) {(1,0,0)};
\node[] () at (-0.5, -0.866) {(0,2,0)};
\node[] () at (1.5, -0.866) {(0,1,0)};
\node[] () at (3.5, -0.866) {(1,1,0)};

\path [-](0) edge (1);
\path [-](2) edge (3);
\path [-](4) edge (5);
\path [-](6) edge (7);

\end{tikzpicture}}
        \caption{Labelling the $P_3$-Shi arrangement.}
        \label{fig:labeled-G-Shi-P_3}
    \end{subfigure}
    \caption{Example Pak-Stanley Labels for $G$-Shi Arrangements}
\end{figure}

Figures \ref{fig:labeled-G-Shi-K_3} and \ref{fig:labeled-G-Shi-P_3} depict $G$-Shi arrangements. Notice that on the left, each label appears once (has multiplicity 1), but on the right, the label $(0,1,0)$ appears twice (has multiplicity 2). In this paper, we work towards a characterization of the ``multiplicities" of the Pak-Stanley labels in $G$-Shi arrangements by introducing a combinatorial model called the Three Rows Game defined in Section \ref{sec:three-rows-game}

In Section \ref{sec:three}, we introduce the Shi adjacency digraph to describe the adjacencies of regions in the $G$-Shi arrangement and their relationship to the Pak-Stanley labels of these regions. Section \ref{sec:four} introduces the Three Rows game, a combinatorial model for analyzing multiplicities of Pak-Stanley labels in $P_n$-Shi arrangements for path graphs $P_n$. In particular, we prove the following result to characterize multiplicities of the Pak-Stanley labels in $G$-Shi arrangements for path graphs.
\begin{theorem*}[Path Multiplicity Theorem]
Suppose ${\bf p} = (p_1,\dots,p_n)$ is a
Pak-Stanley label of a region of $\mathscr{S}(P_n)$. A \textit{run} ${\bf r}$ of length $k$ in ${\bf p}$ is a section of ${\bf p}$ of the form $(0,1,\dots,1,0)$ with $k$ $1$s. If the length of a run ${\bf r}$ is denoted $\ell({\bf r})$, then the multiplicity of the label ${\bf p}$ in in the $P_n$-Shi arrangement is
\begin{equation*}
    \mu({\bf p}) = \prod_{\text{runs ${\bf r}$ in ${\bf p}$}} (\ell({\bf r})+1).
\end{equation*}
\end{theorem*} 
Besides analyzing path graphs, we also study multiplicities of Pak-Stanley labels in $G$-Shi arrangements for all trees in Section \ref{sec:five}, for example characterizing multiplicities for star graphs with a similar theorem. Then in Section \ref{sec:six}, we discuss how the Three Rows Game can be played on all graphs, not just on trees, and the added complexity of such an extension. In particular, we completely characterize multiplicities of the Pak-Stanley labels in $G$-Shi arrangements for cycle graphs with an analogous Cycle Multiplicity Theorem. Finally, we prove the following fact about multiplicities of the maximal labels of regions of $G$-Shi arrangements. 

\begin{corollary*}
Let $G$ be a graph and ${\bf p}$ a maximal $G_\bullet$-parking function. Then $\bf p$ has multiplicity $1$.
\end{corollary*}
\section{Background}
We begin with a discussion of some prerequisite topics: the $G$-Shi arrangement, the Pak-Stanley algorithm, chip-firing, and superstable configurations. An important note is that for the remainder of the paper, all of our graphs are assumed to be connected, finite, and simple.
\subsection{The $G$-Shi Arrangement and the Pak-Stanley Algorithm}

First, we formally define the $G$-Shi arrangement.

\begin{definition}[Shi Arrangement]\label{def:shi-arrangement}
The \textit{Shi arrangement} $\mathscr{S}_n$ is the hyperplane arrangement in $\R^n$ with hyperplanes $x_i-x_j=0$ and $x_i-x_j=1$ for each $i,j\in\{0,1,2,\dots,n-1\}$ with $i<j$.
\end{definition}
\begin{definition}[$G$-Shi Arrangement]
Given a graph $G = (V,E)$ with $V = \{0,\dots,n-1\}$, the \textit{$G$-Shi arrangement} $\mathscr{S}(G)$ is the hyperplane arrangement in $\R^n$ with hyperplanes $x_i-x_j=0$ and $x_i-x_j=1$ for each $\{i,j\}\in E$ with $i<j$ \cite{Hopkins-Perkinson}.  In this case, $G$ is called the \textit{defining graph} of the arrangement.
\end{definition}

It is important to note that the $G$-Shi arrangement depends on the labelling $0,\dots,n-1$ of the vertices of $G$. Indeed, consider the isomorphic graphs $K_4 \setminus \{0,1\}$ and $K_4 \setminus \{0,2\}$. The former has a $G$-Shi arrangement with 84 regions, whereas the latter has a $G$-Shi arrangement with 85 regions. Over the course of this paper, we will demonstrate that in certain special cases the multiplicities do not depend on the labelling of the vertices.

Figure \ref{fig:labeled-G-Shi-K_3} illustrates a projection of the Shi arrangement onto the hyperplane $x_0 + x_1 + x_2 = 0$, allowing us to visualize it in $2$ dimensions without losing any regions or adjacency relations. This projection trick is also used in Figure \ref{fig:labeled-G-Shi-P_3}.



Next, we discuss $G$-parking functions and the Pak-Stanley algorithm (which surjectively assigns $G_\bullet$-parking functions to the regions of the $G$-Shi arrangement).

\begin{definition}[Outdegree]\label{def:outdegree}
Given a graph $G = (V,E)$, a subset $S$ of $V$, and a vertex $v$, the \textit{outdegree $\outdeg_S(v)$ of $v$ with respect to $S$} is the number of edges from $v$ to vertices outside $S$. In particular, $\outdeg_{\varnothing}(v)$ is the outdegree of $v$ (which equals $\deg(v)$ when $G$ is undirected).
\end{definition}

\begin{definition}[$G$-Parking Function]\label{def:G-parking-function}
Let $G$ be an undirected graph on vertices $V=\{0,1,\hdots,n-1,q\}$. In this case, $q$ is called the \textit{sink}. A \textit{$G$-parking function} is an $n$-tuple $(a_0,a_1,\hdots,a_{n-1})$ such that for any non-empty subset $S \subseteq \{0,\dots,n-1\}$, there exists $v\in S$ such that $a_v < \outdeg_S(v)$. 
\end{definition}

\begin{definition}[$G_{\bullet}$]\label{def:G_bullet}
Let $G=(V,E)$ be a graph. Then, $G_{\bullet}$ is the graph obtained from $G$ by adding a \textit{sink} vertex $q$ and an edge between $q$ and each $v\in V$.
\end{definition}



Next, we will discuss the Pak-Stanley algorithm \cite{Stanley}, whose behavior is the central topic of our paper.

\begin{definition}[Pak-Stanley Algorithm]\label{def:pak-stanley-algorithm}
The \textit{Pak-Stanley algorithm} maps $G_{\bullet}$-parking functions to the regions of the $G$-Shi arrangement \cite{Stanley}. It assigns each region $R$ an $n$-tuple $\lambda(R)$ of nonnegative integers, called its \textit{Pak-Stanley label} as follows: 
\begin{enumerate}
    \item The region in which $x_0>x_1>\dots>x_{n-1}$ and $x_0-x_{n-1}<1$ is called the \textit{base region} and denoted $R_0$. Define $\lambda(R_0)=(0,0,\dots,0)$.
    \item Suppose $\lambda(R)$ has been defined, and that $R'$ is a region such that 
    \begin{enumerate}
        \item $R$ and $R'$ share a boundary facet, which is part of a hyperplane $H \in \mathscr{S}(G)$.
        \item $R$ lies in the same half-space of $H$ as $R_0$.
        \end{enumerate}
    In this case, we define
    $\lambda(R') = 
    \begin{cases}
     \lambda(R) + e_i & \text{if } H \text{ is given by } x_i-x_j=0 \text{ with } i<j,\\
     \lambda(R) + e_j &  \text{if } H\text{ is given by }x_i-x_j=1\text{ with }i<j.
    \end{cases}$
\end{enumerate}

Here, $e_i$ denotes the $i$th standard basis vector of $\R^n$.

The set of Pak-Stanley labels for the regions of the $G$-Shi arrangement are called \textit{the Pak-Stanley labels for $G$}; these are the same as the $G_\bullet$-parking functions \cite{Hopkins-Perkinson}.
\end{definition}

The examples of $G = K_3$ and $G = P_3$ are displayed in Figures \ref{fig:labeled-G-Shi-K_3} and \ref{fig:labeled-G-Shi-P_3}. Notice every $(K_3)_\bullet$-parking function appears in Figure \ref{fig:labeled-G-Shi-K_3}, and every $(P_3)_\bullet$-parking function appears in Figure \ref{fig:labeled-G-Shi-P_3}. Furthermore, each $(K_3)_\bullet$-parking function appears exactly once in the $K_3$-Shi arrangement. As it turns out, both of these are general patterns that are proved in \cite{Hopkins-Perkinson}. Specifically, we have the following:

\begin{theorem}[\cite{Hopkins-Perkinson}, Corollary 2.8]\label{thm:every-G-parking-functions-occurs-as-label}
Every $G_\bullet$-parking function occurs as a label in the Pak-Stanley algorithm on the $G$-Shi arrangement.
\end{theorem}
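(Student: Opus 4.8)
\subsection*{Proof proposal}

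The plan is to show the Pak-Stanley map is surjective by producing, for each $G_\bullet$-parking function $\mathbf{p}$, an explicit generic point $x\in\R^n$ whose region is labelled $\mathbf{p}$. The first step is to record the closed form of the label obtained by unwinding Definition \ref{def:pak-stanley-algorithm} along a straight-line path from a base-region point to $x$: since the relevant affine function is crossed at most once by such a path, each hyperplane of $\mathscr{S}(G)$ is crossed at most once, and
\begin{equation*}
    \lambda(R)_i=\#\{\,j>i:\{i,j\}\in E,\ x_i<x_j\,\}+\#\{\,j<i:\{j,i\}\in E,\ x_j-x_i>1\,\}.
\end{equation*}
Equivalently, each edge $\{i,j\}$ with $i<j$ contributes $e_i$ when $x_i<x_j$, contributes $e_j$ when $x_i-x_j>1$, and contributes nothing when $0<x_i-x_j<1$. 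Thus realizing $\mathbf{p}$ amounts to assigning to every edge one of these three states so that the states are jointly realizable by a single point and exactly $p_v$ edges at each vertex $v$ land in the state contributing $e_v$.

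The key simplification is that joint realizability becomes automatic once we commit to a total order of the coordinates. If we fix a chain $x_{w(1)}>\cdots>x_{w(n)}$ and, for each consecutive pair, declare its gap to be either ``short'' (in $(0,1)$) or ``long'' (greater than $1$), then taking all short gaps equal to a common small $\varepsilon<1/n$ makes the system realizable: for any pair, $x_a-x_b>1$ precisely when a long gap separates them in the chain, and otherwise $x_a-x_b\in(0,1)$. So the problem reduces to choosing a permutation $w$ and a placement of long gaps that produce the counts $p_v$.

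To choose the order I will use the superstable characterization of $G_\bullet$-parking functions together with Dhar's burning algorithm: since $\mathbf{p}$ is superstable, burning from the sink $q$ ignites all of $V$, and at the moment a vertex $v$ catches fire it has at least $p_v$ already-burnt graph-neighbours (one further burnt neighbour, the sink, accounts for the strict inequality in the burning rule). Taking the burning sequence $u_1,\dots,u_n$ as the chain and inserting the vertices one at a time, each new vertex $u_k$ sees at least $p_{u_k}$ of its neighbours already placed, which is exactly the supply needed to switch on $p_{u_k}$ contributing edges at $u_k$ through the short/long gap choice. Carrying out this insertion so that the counts come out \emph{exactly} right, and not merely with the correct inequalities, is the technical heart of the argument.

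The main obstacle is precisely this exact-count bookkeeping. Because a gap is a property of a position in the chain rather than of an individual edge, toggling one edge between its neutral and contributing states can disturb others that span the same position; moreover the three states are tied to the integer labels $i<j$, so placing $u_k$ below an already-placed neighbour can force a contribution, and placing it above one can retroactively add a contribution to an earlier, already-finalized vertex. A naive greedy placement can therefore overshoot $p_{u_k}$. Resolving this requires exploiting the freedom in the burning order and the burning ``slack'' to control the forced contributions; I expect the cleanest route to be an induction along the burning sequence maintaining the invariant that the placed vertices realize the restriction of $\mathbf{p}$, with the superstability condition guaranteeing at each step that a valid insertion height exists.
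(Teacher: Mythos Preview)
The paper does not prove this statement; it simply records it as Corollary~2.8 of Hopkins--Perkinson and moves on. So there is no ``paper's own proof'' to compare against, and the question is only whether your sketch can be completed into a proof.

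It cannot, as written, and the problem is earlier than the bookkeeping step you flag. Your key simplification---restricting to points where consecutive gaps in the chain are either a common $\varepsilon<1/n$ or greater than $1$---is too coarse: the regions of $\mathscr S(G)$ reachable this way do \emph{not} carry every $G_\bullet$-parking function. Take $G=K_3$ and the label $\mathbf p=(0,0,1)$. For every edge to stay off vertices $0$ and $1$ you need $0<x_0-x_1<1$ and $x_1-x_2>0$; to pick up the single contribution at vertex $2$ you need either $x_0-x_2>1$ or $x_1-x_2>1$. The second option is impossible (it would force $x_0-x_2>1$ while you also need $0<x_0-x_2<1$), so the \emph{only} region with label $(0,0,1)$ is $0<x_0-x_1<1$, $0<x_1-x_2<1$, $x_0-x_2>1$. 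In your parameterization the order is $x_0>x_1>x_2$ with both consecutive gaps short, which forces $x_0-x_2<1$. Thus no point of your special form lands in this region, and the label $(0,0,1)$ is never produced. The reduction ``permutation plus a set of long-gap positions'' simply does not give enough independent control over edges that span several consecutive positions.

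Even setting this aside, the proposal is by your own admission incomplete: the paragraph on the ``main obstacle'' correctly identifies that inserting $u_k$ can retroactively change contributions at earlier vertices, but it offers only an expectation that superstability provides enough slack, not an argument. If you want to salvage the burning-order idea, you will need a different realizability scheme---one that lets you tune the state of each edge crossing the newly inserted vertex independently---and a genuine inductive invariant. The Hopkins--Perkinson argument proceeds through bigraphical arrangements and is worth reading before attempting a direct construction.
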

It is well known that there are $(n+1)^{n-1}$ regions in the Shi arrangement $\mathscr{S}_n$. Similarly, there are $(n+1)^{n-1}$ parking functions (and therefore $(n+1)^{n-1}$ $(K_n)_\bullet$-parking functions). Thus, every $(K_n)_\bullet$-parking function occurs as a label precisely once in the Pak-Stanley algorithm on the $K_n$-Shi arrangement.

\subsection{Chip-Firing, Superstable Configurations, and Dhar's Burning Algorithm}
We begin this section by recounting some of the definitions in Chapter 2 of \textit{The Mathematics of Chip-firing} by Klivans \cite{Klivans}. For more detail, we refer the reader to this book.

In this section, $G = (V,E)$ is a graph on $n+1$ vertices with sink vertex $q$.

\begin{definition}[Chip Configuration]\label{def:chip-configuration}
A \textit{chip configuration} for $G$ is a non-negative integer vector
\begin{equation*}
    {\bf c} = (c_0,c_1,\dots,c_{n-1}) \in \Z^n_{\geq 0}
\end{equation*}
where each coordinate of the vector corresponds to the number of ``chips" at a particular vertex. That is, the $i$th coordinate $c_i$ represents the number of chips at the vertex $v_i$. Notice that we do not consider the number of chips on the sink vertex $q$.
\end{definition}

\begin{definition}[Firing]\label{def:firing}
Given a chip configuration ${\bf c}$ for $G$, a non-sink vertex $v$ \textit{fires} by sending one chip to each of its neighbors (possibly including the sink). That is, the configuration ${\bf c}$ is replaced by ${\bf c'} = (c_0',\dots,c_{n-1}') \in \Z^n$ where
\begin{equation*}
    c_i' = \begin{cases}
     c_i - \deg i & i = v \\
     c_i + 1 & \{i,v\} \in E \\
     c_i & \text{otherwise.}
    \end{cases}
\end{equation*}
This is a \textit{legal fire} if ${\bf c'}$ is a chip configuration.
\end{definition}

\begin{definition}[Stable]\label{def:stable}
A chip configuration ${\bf c}$ is \textit{stable} if there are no legal fires.
\end{definition}

Now let us transition to discussing \textit{superstable configurations}. 

\begin{definition}[Graph Laplacian]\label{def:the-graph-laplacian}
Let $G=(V,E)$ be a graph on $n$ vertices $v_0,\dots,v_{n-1}$. The graph \textit{Laplacian} $\Delta(G)$ is the $n\times n$ matrix given by
\begin{equation*}
    \Delta_{ij} = \begin{cases}
        \deg(v_i) & i = j\\
        -1 & i\neq j \text{ and } \{v_i,v_j\} \in E \\
        0 & \text{otherwise.}
    \end{cases}
\end{equation*}
\end{definition}

\begin{definition}[Reduced Laplacian]\label{def:reduced-laplacian}
Let $\Delta$ be the Laplacian of a graph $G$ with sink $q$. Then, the \textit{reduced graph Laplacian of $G$ with respect to $q$}, denoted $\Delta_q(G)$ or $\Delta_q$, is the matrix obtained from $\Delta$ by deleting the row and column corresponding to $q$.
\end{definition}


\begin{definition}[Cluster-Fire]\label{def:cluster-fire}
Let $G$ be a graph on $n+1$ vertices with sink $q$. Consider a chip configuration ${\bf c} = (c_0,\dots,c_{n-1})$. Let $S$ be a subset of the vertices of $G$. A \textit{cluster-fire at $S$} replaces ${\bf c}$ with a new configuration ${\bf c}'$ given by sending a chip to each neighbor of $v$ for each $v \in S$. Formally, 
\begin{equation*}
    {\bf c}' = {\bf c} - \Delta_q(G)\chi_S
\end{equation*}
where $\chi_S \in \R^n$ is the characteristic vector of $S$; that is, $\chi_S$ has $i$th coordinate equal to $1$ if $i \in S$ and $0$ if $i \not\in S$. Such a cluster-fire is called \textit{legal} if ${\bf c'}$ is still a chip configuration.
\end{definition}

\begin{definition}[Superstable Configuration]\label{def:superstable-configuration}
Let a graph $G$ be given with a chip configuration ${\bf c}$. The configuration ${\bf c}$ is called \textit{superstable} if there are no legal cluster-fires from ${\bf c}$.
\end{definition}

\begin{example}
The left-hand configuration is not superstable, as one can fire at the three vertices whose chip labels are bolded. However, the right-hand side is superstable.

\begin{figure}[H]
    \centering
    \includegraphics[width=0.5\textwidth]{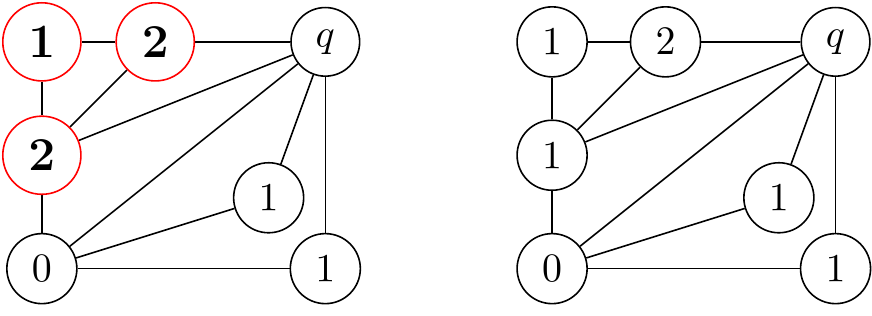}
    \caption{Non-Superstable and Superstable Configurations.}
    \label{fig:Non-Superstable-and-Superstable-Configurations}
\end{figure}
\end{example}
Now, the main reason why superstable configurations are central to our paper is the following result: 

\begin{theorem}[\cite{Klivans}, Theorem 3.6.3]\label{thm:superstable-configurations-are-G-parking-functions}
If $G$ is a graph with sink vertex $q$, then the $G$-parking functions of $G$ are precisely the set of superstable configurations of $G$.
\end{theorem}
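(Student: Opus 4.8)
The plan is to show that the defining condition for a $G$-parking function is literally the negation of the existence of a legal cluster-fire, so that the two notions coincide subset-by-subset over all nonempty $S \subseteq \{0,\dots,n-1\}$. The entire argument reduces to computing the effect of a single cluster-fire and reading off exactly when it is legal.

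First I would fix a chip configuration $\mathbf{c}$ and a nonempty $S \subseteq \{0,\dots,n-1\}$ and compute the coordinates of $\mathbf{c}' = \mathbf{c} - \Delta_q(G)\chi_S$ from Definition~\ref{def:cluster-fire}. Expanding $(\Delta_q(G)\chi_S)_v = \sum_{u \in S}(\Delta_q)_{vu}$ and using Definitions~\ref{def:the-graph-laplacian} and~\ref{def:reduced-laplacian}, I would separate two cases. For $v \notin S$, only off-diagonal entries contribute, each equal to $-1$ for a neighbor in $S$, so $(\Delta_q(G)\chi_S)_v \le 0$ and hence $c'_v \ge c_v \ge 0$; these coordinates can never obstruct legality. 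For $v \in S$, the diagonal term contributes $\deg(v)$ and each neighbor of $v$ lying in $S$ contributes $-1$, giving $(\Delta_q(G)\chi_S)_v = \deg(v) - \lvert\{u \in S : \{u,v\} \in E\}\rvert$.

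The crux is to recognize the right-hand side as $\outdeg_S(v)$: here $\deg(v)$ counts all edges at $v$ (including the edge to the sink $q$), and subtracting the neighbors of $v$ inside $S$ leaves exactly the edges from $v$ to vertices outside $S$, which is $\outdeg_S(v)$ by Definition~\ref{def:outdegree}. Being careful that $\deg$ is taken in the graph with the sink present, so that the bookkeeping matches Definition~\ref{def:G-parking-function}, is the one place where a sign slip could creep in, and so this identity is the main thing to verify cleanly. With it in hand, $c'_v = c_v - \outdeg_S(v)$ for $v \in S$, and since the $v \notin S$ coordinates are automatically nonnegative, the cluster-fire at $S$ is legal precisely when $c_v \ge \outdeg_S(v)$ for every $v \in S$.

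Finally I would take the contrapositive. By Definition~\ref{def:superstable-configuration}, $\mathbf{c}$ is superstable iff no nonempty $S$ admits a legal cluster-fire, i.e.\ iff for every nonempty $S$ there exists $v \in S$ with $c_v < \outdeg_S(v)$, which is exactly the condition in Definition~\ref{def:G-parking-function}. (The case $S = \varnothing$ gives $\mathbf{c}' = \mathbf{c}$ and is excluded as it is never a genuine fire.) Hence superstable configurations and $G$-parking functions coincide. I expect no serious obstacle beyond the Laplacian computation; the content is entirely in identifying $(\Delta_q(G)\chi_S)_v$ with $\outdeg_S(v)$ on $S$ and observing that the complementary coordinates only ever gain chips.
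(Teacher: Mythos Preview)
Your proposal is correct and is exactly the approach the paper takes: its entire proof reads ``By definition; see Definitions~\ref{def:G-parking-function} and~\ref{def:superstable-configuration}.'' You have simply unpacked that one-line remark by computing $(\Delta_q\chi_S)_v$ explicitly and matching it against $\outdeg_S(v)$, which is the verification the paper leaves to the reader.
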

\begin{proof}
By definition; see Definitions \ref{def:G-parking-function} and \ref{def:superstable-configuration}.
\end{proof}

We conclude with a discussion of some important results about superstable configurations. First, we will discuss Dhar's Burning Algorithm. 

\begin{definition}[Dhar's Burning Algorithm]\label{def:dhars-burning-algorithm}
Let $G$ be a graph with sink $q$, and ${\bf c}$ be a chip configuration on $G$. Envision the chips as firefighters protecting the vertices that they are on. Then, light the sink $q$ on fire, and repeat the following process: 
\begin{enumerate}
    \item If a vertex $v$ is on fire, the fire spreads along all the edges of $v$ towards $v$'s neighbors.
    \item Each firefighter can stop the fire along exactly one edge.
    \item A vertex $v'$ does not light on fire as long as there is a firefighter stopping the fire at each burning incident edge of $v'$. However, if there are more incident burning edges at $v'$ than firefighters, $v'$ catches on fire.
\end{enumerate}
\end{definition}


\begin{theorem}[\cite{Klivans}, Theorem 2.6.24]\label{def:dhars-burning-algorithm-detects-superstability}
Let $G$ be a finite graph with sink $q$. Start a fire at $q$. A configuration is superstable if and only if every vertex is eventually on fire.
\end{theorem}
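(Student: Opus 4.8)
The plan is to run Dhar's Burning Algorithm to completion, let $S$ denote the set of vertices that never catch fire, and exploit a precise dictionary between the stopping condition of the burning process and the cluster-fire inequalities that define superstability (Definitions~\ref{def:cluster-fire} and \ref{def:superstable-configuration}). First I would record that the process is monotone: the fire only ever spreads, and an ignited vertex stays ignited, so adding a burning vertex can only help further vertices ignite and never prevents an ignition. Hence the terminal set of burnt vertices is the unique closure of $\{q\}$ under the ignition rule, independent of the order in which the firefighters act; in particular ``every vertex is eventually on fire'' is well-defined. Write $V$ for the full vertex set of $G$ (including $q$), let $B \subseteq V$ be the terminal burnt set, and set $S = V \setminus B$; since $q \in B$ we have $S \subseteq \{0,\dots,n-1\}$. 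The crucial observation is that, at termination, the burning incident edges of a vertex $v$ are exactly the edges from $v$ into $B$, i.e.\ the edges from $v$ to vertices outside $S$ (the sink included), so the number of burning incident edges of $v$ equals $\outdeg_S(v)$.

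With this dictionary I would prove the two implications by contraposition. Suppose first that ${\bf c}$ is not superstable, so by Definitions~\ref{def:superstable-configuration} and \ref{def:G-parking-function} there is a nonempty $S_0 \subseteq \{0,\dots,n-1\}$ with $c_v \geq \outdeg_{S_0}(v)$ for every $v \in S_0$. I claim the fire never enters $S_0$, which I would show by induction on the burning process: initially only $q$ burns and $q \notin S_0$, and if at some stage the burnt set avoids $S_0$, then for any $v \in S_0$ its burning neighbors all lie outside $S_0$, so its number of burning incident edges is at most $\outdeg_{S_0}(v) \leq c_v$ and $v$ does not ignite. Thus $S_0$ stays unburnt, and not every vertex burns.

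Conversely, suppose not every vertex burns, so the unburnt set $S = V \setminus B$ is nonempty. Because the process has terminated, no $v \in S$ ignites, which by the ignition rule means each $v \in S$ has at most $c_v$ burning incident edges; by the dictionary this reads $c_v \geq \outdeg_S(v)$ for all $v \in S$. I would then check that the cluster-fire at $S$ is legal: using the reduced Laplacian (Definition~\ref{def:reduced-laplacian}), the configuration ${\bf c} - \Delta_q(G)\chi_S$ has $i$th coordinate $c_i - \outdeg_S(v_i)$ for $i \in S$ and $c_i + (\text{number of neighbors of } v_i \text{ in } S) \geq c_i$ for $i \notin S$, so nonnegativity reduces exactly to the inequalities $c_v \geq \outdeg_S(v)$ on $S$. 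Hence the cluster-fire at $S$ is legal and ${\bf c}$ is not superstable. Combining the two contrapositives yields the stated equivalence.

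The main obstacle I anticipate is bookkeeping rather than conceptual: I must verify carefully that ``burning incident edges of $v$'' coincides with $\outdeg_S(v)$, which hinges on the sink always being on fire (so that edges to $q$ are counted both among the burning edges and in $\outdeg_S$), and on the Laplacian computation identifying the legality of a cluster-fire at $S$ with the inequalities $c_v \geq \outdeg_S(v)$ for $v \in S$. A secondary point worth stating explicitly is the order-independence of the terminal burnt set, which guarantees that the unburnt set $S$ used above is canonical and that the phrase ``eventually on fire'' is unambiguous.
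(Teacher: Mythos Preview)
Your proposal is correct and follows essentially the same approach as the paper: both arguments identify the terminal unburnt set $S$ with a legal cluster-fire when $S \neq \varnothing$, and for the other direction your inductive claim that the fire never enters a cluster-firable set $S_0$ is the contrapositive of the paper's observation that the first vertex $u$ of any $U$ to ignite must satisfy $c_u < \outdeg_U(u)$. The paper's proof is terser and omits your careful bookkeeping (monotonicity, the $\outdeg_S$ dictionary, the Laplacian computation), but the underlying argument is the same.
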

\begin{proof}
Begin with a chip configuration ${\bf c}$ and let $S$ denote the set of vertices which are not on fire when the algorithm concludes. If $S \neq \varnothing$, then there is a legal cluster-fire at $S$. To see why, notice that each vertex $v \in S$ must start with $\outdeg_S(v)$ firefighters to prevent itself from lighting on fire, and it loses precisely $\outdeg_S(v)$ chips in a cluster-fire at $S$. On the other hand, suppose $S = \varnothing$. Then, there is no legal cluster-fire at any $U \subseteq V \setminus \{q\}$. For, if $u$ is the first vertex of $U$ to catch on fire, ${\bf c}_u < \outdeg_U(u)$, so a cluster-fire at $U$ would leave $u$ with negative chips.
\end{proof}

Next, we discuss a powerful concept called critical-superstable duality and its implications.


\begin{definition}[Critical Configurations]\label{def:critical-configuration}
A chip configuration ${\bf c}$ is called \textit{critical} if it is stable and arises from the chip-firing process from a configuration ${\bf c}'$ in which every vertex is ready to fire.
\end{definition}

\begin{theorem}[\cite{Klivans}, Theorem 2.6.19]\label{prop:critical-superstable-duality}
Let ${\bf c}_{\text{max}}$ denote the configuration given by $({\bf c}_{\text{max}})_i = \deg(i) - 1$. Then, for any graph $G$, ${\bf c}$ is critical if and only if ${\bf c}_{\text{max}} - {\bf c}$ is superstable.
\end{theorem}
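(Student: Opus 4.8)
The plan is to turn the multiplicity $\mu({\bf p})$, which by definition counts the regions $R$ of $\mathscr{S}(G)$ with $\lambda(R)={\bf p}$, into a count of combinatorial objects that I can pin down exactly. First I would establish the standard crossing-count description of the Pak--Stanley label coming from Definition \ref{def:pak-stanley-algorithm}: the coordinate $\lambda(R)_v$ equals the number of hyperplanes separating $R$ from the base region $R_0$ that increment the $v$-th coordinate. Concretely, in a region $R$ each edge $\{i,j\}$ with $i<j$ lies in exactly one of three states --- $x_i<x_j$ (state $A$), $x_j<x_i<x_j+1$ (state $B$), or $x_i>x_j+1$ (state $C$) --- and a direct check against $R_0$ (where $x_i>x_j$ and $x_i-x_j<1$) shows that state $A$ contributes $e_i$, state $B$ contributes nothing, and state $C$ contributes $e_j$. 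Summing over edges gives $\sum_v \lambda(R)_v = |E| - b(R)$, where $b(R)$ is the number of state-$B$ edges of $R$. Hence every label has coordinate sum at most $|E|$, with equality exactly for the regions having no state-$B$ edge.

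Next I would identify the maximal (componentwise-maximal) $G_\bullet$-parking functions with the labels of coordinate sum $|E|$. One direction is immediate: since $|E|$ is the largest attainable coordinate sum, any label of coordinate sum $|E|$ is automatically maximal. For the converse I would invoke critical--superstable duality (Theorem \ref{prop:critical-superstable-duality}): the order-reversing involution ${\bf c}\mapsto {\bf c}_{\max}-{\bf c}$ carries maximal superstable configurations to minimal critical configurations, and by the standard sandpile correspondence the minimal critical configurations of $G_\bullet$ are exactly those coming from acyclic orientations of $G_\bullet$ with unique sink $q$. Such an orientation orients every edge $i\to q$ and so restricts to an acyclic orientation $\omega$ of $G$; writing $d_i$ for the in-degree of $i$ in $\omega$ and using $({\bf c}_{\max})_i=\deg_{G_\bullet}(i)-1=\deg_G(i)$, the associated maximal $G_\bullet$-parking function is $p_i=\deg_G(i)-d_i=\outdeg_\omega(i)$. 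In particular $\sum_i p_i=|E|$, so indeed every maximal label has coordinate sum $|E|$, and it is precisely the out-degree vector of an acyclic orientation of $G$.

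Combining these, fix a maximal label ${\bf p}$. Its coordinate sum is $|E|$, so by the first paragraph every region carrying ${\bf p}$ has $b(R)=0$, i.e. no state-$B$ edge. In such a region each edge is forced into state $A$ or $C$ purely by the sign of $x_i-x_j$, so the region determines, and is determined by, the orientation $\omega$ of $G$ that points each edge from its smaller to its larger coordinate. This orientation is automatically acyclic (a directed cycle would force $x_{v_1}<\cdots<x_{v_1}$), and conversely every acyclic orientation is realized by exactly one such region, because the induced system of strict difference inequalities $x_{\mathrm{head}}>x_{\mathrm{tail}}$ (with an extra $+1$ gap on state-$C$ edges) is feasible precisely when $\omega$ is acyclic. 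Under this correspondence the label is exactly $\outdeg_\omega$, so $\mu({\bf p})$ equals the number of acyclic orientations of $G$ whose out-degree vector is ${\bf p}$.

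It then remains only to prove the combinatorial crux: distinct acyclic orientations of $G$ have distinct out-degree vectors. I would argue by symmetric difference. If $\omega,\omega'$ are acyclic with $\outdeg_\omega=\outdeg_{\omega'}$, let $D$ be the set of edges on which they disagree. Equality of out-degrees forces, at each vertex, the number of $D$-edges pointing out under $\omega$ to equal the number pointing in, so $\omega$ restricted to $D$ is balanced and hence decomposes into directed cycles; acyclicity of $\omega$ forces $D=\varnothing$, i.e. $\omega=\omega'$. Therefore each maximal label is the out-degree vector of a unique acyclic orientation, hence of a unique region, giving $\mu({\bf p})=1$. The main obstacle is the converse half of the maximality characterization --- that a componentwise-maximal parking function must attain coordinate sum $|E|$ --- since this does not follow from the geometry alone (a region with a state-$B$ edge need not border the hyperplane one would cross to increment its label), and this is exactly where critical--superstable duality does the essential work; the out-degree lemma above then supplies the uniqueness.
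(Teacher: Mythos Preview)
Your proposal does not prove the stated theorem at all. Theorem~\ref{prop:critical-superstable-duality} is the critical--superstable duality: ${\bf c}$ is critical if and only if ${\bf c}_{\max}-{\bf c}$ is superstable. You never establish either implication. What you have written is instead a proof of the \emph{Corollary} that every maximal $G_\bullet$-parking function has multiplicity~$1$ as a Pak--Stanley label, and in the course of that argument you explicitly \emph{invoke} Theorem~\ref{prop:critical-superstable-duality} (``I would invoke critical--superstable duality''). So viewed as a proof of the theorem you were asked to prove, the argument is circular.

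For context, the paper does not supply a proof of Theorem~\ref{prop:critical-superstable-duality} either: it is quoted from Klivans' book and used as a black box. There is thus no ``paper's own proof'' to compare against; the correct response here would have been either to reproduce the standard sandpile argument from \cite{Klivans} or to note that the result is imported without proof.

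As a side remark, if your intended target was actually the Corollary $\mu({\bf p})=1$ for maximal ${\bf p}$, your argument via acyclic orientations and out-degree vectors is a reasonable and essentially correct alternative to the paper's route, which uses the $G$-Three Rows Game and a swap/cycle argument on histories. Your approach leans on external chip-firing facts (the bijection between minimal critical configurations and acyclic orientations with unique sink) not developed in the paper, whereas the paper's proof is self-contained in the Three Rows Game language. But none of that salvages the proposal as a proof of the duality theorem itself.
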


\begin{proposition}[Chips in Maximal Superstable Configurations]\label{prop:maximal-superstable-conf}
Suppose that $G$ is a graph on $n$ vertices with $m$ edges. Then every maximal superstable configuration on $G_\bullet$ has $m$ chips. Conversely, every superstable configuration on $G_\bullet$ with $m$ chips is maximal.
\end{proposition}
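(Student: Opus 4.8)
The plan is to prove the following sharper statement, from which both assertions of the proposition follow at once: every superstable configuration on $G_\bullet$ has at most $m$ chips, and a superstable configuration on $G_\bullet$ is maximal if and only if it has exactly $m$ chips. The engine is Dhar's Burning Algorithm (Theorem \ref{def:dhars-burning-algorithm-detects-superstability}) together with an edge count. One could also extract the inequality $|{\bf c}| \le m$ from critical-superstable duality (Theorem \ref{prop:critical-superstable-duality}), since a critical configuration of $G_\bullet$ carries $\sum_i (\deg_{G_\bullet}(i) - 1) = 2m$ chips minus those of its superstable dual; but the burning argument is self-contained and also yields the maximality characterization directly.

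First I would prove the upper bound. Let ${\bf c}$ be superstable on $G_\bullet$. By Theorem \ref{def:dhars-burning-algorithm-detects-superstability}, lighting $q$ eventually ignites every vertex; processing the fire one vertex at a time, record the vertices in their order of ignition as $q = v_0, v_1, \dots, v_n$, where for each $k \ge 1$ the vertex $v_k$ catches fire because strictly more than $c_{v_k}$ of its neighbors are already burning. Writing $d_k$ for the number of neighbors of $v_k$ lying in $\{v_0,\dots,v_{k-1}\}$, we have $c_{v_k} \le d_k - 1$ for all $k \ge 1$. Every edge of $G_\bullet$ is counted exactly once among $d_1,\dots,d_n$ — at the endpoint of larger index, using that both endpoints of an edge cannot equal $v_0 = q$ — so $\sum_{k=1}^n d_k = |E(G_\bullet)| = m + n$. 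Summing the inequalities gives $|{\bf c}| = \sum_{k=1}^n c_{v_k} \le (m+n) - n = m$.

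Next I would establish the maximality characterization. If ${\bf c}$ is superstable with $|{\bf c}| = m$, then any configuration strictly dominating ${\bf c}$ carries more than $m$ chips and so fails to be superstable by the bound just proved; hence ${\bf c}$ is maximal. Conversely, suppose ${\bf c}$ is superstable with $|{\bf c}| < m$; I claim it is not maximal. Since $\sum_{k=1}^n (d_k - 1 - c_{v_k}) = m - |{\bf c}| > 0$ with nonnegative summands, some index $j$ has $c_{v_j} \le d_j - 2$. Let ${\bf c}'$ be ${\bf c}$ with one extra chip at $v_j$. I would verify that the ignition sequence $v_0, v_1, \dots, v_n$ is still valid for ${\bf c}'$: the vertices $v_k$ with $k \neq j$ have unchanged chip counts and ignite exactly when they did before, while at the stage where $v_0,\dots,v_{j-1}$ have burned, $v_j$ sees $d_j \ge c'_{v_j} + 1$ burning neighbors and so ignites. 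Thus every vertex burns under ${\bf c}'$, so ${\bf c}'$ is superstable and strictly dominates ${\bf c}$, and ${\bf c}$ is not maximal.

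The proposition then follows: $G_\bullet$-parking functions coincide with superstable configurations of $G_\bullet$ (Theorem \ref{thm:superstable-configurations-are-G-parking-functions}), and among these the maximal ones are exactly the maximal superstable configurations, which we have identified as precisely those with $m$ chips. I expect no deep obstacle, only two points needing care: the edge-counting identity $\sum_k d_k = |E(G_\bullet)|$ (in particular handling the sink correctly), and the verification in the last step that adding a chip at $v_j$ leaves the \emph{entire} ignition sequence valid — the crucial observation being that adding chips can only postpone an ignition, never cancel one that was already scheduled, so the fire still reaches every vertex.
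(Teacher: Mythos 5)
Your proof is correct, but it takes a genuinely different route from the paper. The paper deduces the proposition from critical--superstable duality (Theorem \ref{prop:critical-superstable-duality}): it cites Proposition 2.6.22 of Klivans to get that minimal critical configurations on $G_\bullet$ carry $|E(G_\bullet)|-\deg(q)=m$ chips, computes that ${\bf c}_{\text{max}}$ carries $2m$ chips, and concludes that maximal superstable configurations carry $2m-m=m$ chips, with the converse following from the fact that any superstable configuration sits below a maximal one of the same total weight. You instead run Dhar's Burning Algorithm directly: linearizing the ignition order and counting each edge of $G_\bullet$ at its later-burning endpoint gives $\sum_k d_k = m+n$ and hence the clean bound $|{\bf c}|\le m$ for \emph{every} superstable configuration, after which the equivalence ``maximal $\iff$ exactly $m$ chips'' falls out — the forward direction from the bound, the reverse by adding a chip at a vertex with slack and observing that the same ignition sequence still certifies superstability (which is legitimate, since the set of eventually-burnt vertices is order-independent, so exhibiting one valid sequence suffices). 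What your approach buys is self-containedness — it replaces the black-box citation of Klivans' Proposition 2.6.22 and the duality theorem with an elementary edge count — and it makes the paper's ``the converse follows immediately'' fully explicit, at the cost of being somewhat longer. Both arguments are sound; yours is arguably the more instructive one to include if the duality machinery is not otherwise needed.
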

\begin{proof}
Since $G_\bullet$ has $m+n$ edges and $\deg(q) = n$, Proposition 2.6.22 of Klivans yields that any minimal critical configuration on $G_\bullet$ has $m$ chips \cite{Klivans}. On the other hand, the sum of the degrees of all non-sink vertices of $G_\bullet$ is $2m+n$, and to compute ${\bf c}_{\text{max}}$ we subtract $1$ for each of the $n$ non-sink vertices, then $c_{\text{max}}$ on $G_\bullet$ has $2m$ chips. Therefore, by Theorem \ref{prop:critical-superstable-duality}, we have that every maximal superstable configuration has $2m - m = m$ chips. The converse follows immediately.
\end{proof}

\section{The Shi Adjacency Digraph}\label{sec:three}

In this section, we introduce the Shi adjacency digraph and discuss how we use it to perform the Pak-Stanley algorithm.

\subsection{Shi Adjacency Digraphs}
When following the Pak-Stanley algorithm, we assign labels to new regions based on the labels of adjacent regions. 
The order in which we assign the labels depends on the placement of the regions with respect to the base region $R_0$. 
To encode this information about region adjacency and the order of assigning labels, we created the Shi adjacency digraph. 

\begin{definition}[Shi Adjacency Digraph]
Given a graph $G$, called the \textit{defining graph}, we construct the \textit{Shi adjacency digraph} $\Gamma\mathscr{S}(G)$ from the $G$-Shi arrangement as follows: 
\begin{enumerate}
    \item Each region $R_i$ in the $G$-Shi arrangement is mapped to the vertex $v_i$ in $\Gamma\mathscr{S}(G)$.
    \item Two vertices $v_i$ and $v_j$ are connected by a directed edge if and only if they correspond to regions $R_i$ and $R_j$ which share a face given by a hyperplane $H$. 
    \begin{enumerate}
        \item If $R_0$ and $R_i$ lie on the same side of $H$, the directed edge is $(v_i,v_j)$. 
        \item If $R_0$ and $R_j$ lie on the same side of $H$, the directed edge is $(v_j,v_i)$.
    \end{enumerate}
\end{enumerate}
\end{definition}

Another way to think about the Shi adjacency digraph is to realize that the $G$-Shi arrangement endows $\R^n$ with a natural cell structure, and the Shi adjacency digraph (or rather, the undirected graph underlying it) is the $1$-skeleton of the dual of that cell structure. 

We can also label each of the directed edges with a value corresponding to the Pak-Stanley algorithm so that we can recover the Pak-Stanley labels of vertices directly from the graph. If a directed edge corresponds to hyperplane $x_i-x_j=c$ separating adjacent regions then we label it $i$ if $c=0$ or $j$ if $c=1$. Then, we label the source vertex $(0,0,\dots, 0)$, and the label of $v_j$ is equal to the label of $v_i$ incremented by 1 at the index assigned to the edge $(v_i,v_j)$. 

To generate these graphs, we created the SageMath class \lstinline{G_Shi}. The class takes in a defining graph $G$, constructs the Shi adjacency digraph, and uses the above algorithm to compute the Pak-Stanley labels. The code and documentation for this program is in Appendix \ref{appendix:g_shi_code}.

\begin{figure}[H]
    \centering
    \begin{subfigure}[b]{0.49\textwidth}
        \centering
        \resizebox{.95\linewidth}{!}{\begin{tikzpicture}[scale=2, font=\scriptsize]
\node[] (0) at (-1.645, -0.848) {};
\node[] (1) at (0.49, 2.848) {};
\node[] (2) at (-0.817, -1.415) {};
\node[] (3) at (1.394, 2.415) {};
\node[] (4) at (0.817, -1.415) {};
\node[] (5) at (-1.394, 2.415) {};
\node[] (6) at (1.645, -0.848) {};
\node[] (7) at (-0.49, 2.848) {};
\node[] (8) at (-2.211, 1) {};
\node[] (9) at (2.211, 1) {};
\node[] (10) at (-2.134, 0) {};
\node[] (11) at (2.134, 0) {};

\path [-](0) edge (1);
\path [-](2) edge (3);
\path [-](4) edge (5);
\path [-](6) edge (7);
\path [-](8) edge (9);
\path [-](10) edge (11);

\end{tikzpicture}}
        \label{step1er}
    \end{subfigure}
    \begin{subfigure}[b]{0.49\textwidth}
    \centering
    \begin{tikzpicture}[scale=2]
\node[shape=circle,draw=black, fill=black] (1) at (0, 0.667) {};
\node[shape=circle,draw=black, fill=black] (2) at (-0.577, 0.333) {};
\node[shape=circle,draw=black, fill=black] (3) at (0.577, 0.333) {};
\node[shape=circle,draw=black, fill=black] (4) at (0, 1.333) {};
\node[shape=circle,draw=black, fill=black] (5) at (1.155, 0.667) {};
\node[shape=circle,draw=black, fill=black] (6) at (0.577, 1.667) {} ;
\node[shape=circle,draw=black, fill=black] (7) at (-0.577, 1.67) {} ;
\node[shape=circle,draw=black, fill=black] (8) at (-1.155, 0.667) {} ;
\node[shape=circle,draw=black, fill=black] (9) at (-0.577, -0.333) {} ;
\node[shape=circle,draw=black, fill=black] (10) at (0.577, -0.333) {} ;
\node[shape=circle,draw=black, fill=black] (11) at (1.155, 1.333) {} ;
\node[shape=circle,draw=black, fill=black] (12) at (0, 2.667) {} ;
\node[shape=circle,draw=black, fill=black] (13) at (-1.155, 1.333) {} ;
\node[shape=circle,draw=black, fill=black] (14) at (-1.732, -0.333) {} ;
\node[shape=circle,draw=black, fill=black] (15) at (0, -0.667) {} ;
\node[shape=circle,draw=black, fill=black] (16) at (1.732, -0.333) {} ;

\path [->, thick](1) edge node [fill=white] {0} (2);
\path [->, thick](1) edge node [fill=white] {2} (3);
\path [->, thick](1) edge node [fill=white] {1} (4);
\path [->, thick](4) edge node [fill=white] {1}  (6);
\path [->, thick](4) edge node [fill=white] {0}  (7);
\path [->, thick](2) edge node [fill=white] {0} (8) ;
\path [->, thick](2) edge node [fill=white] {2} (9);
\path [->, thick](3) edge node [fill=white] {1} (5);
\path [->, thick](3) edge node [fill=white] {2} (10);
\path [->, thick](5) edge node [fill=white] {1} (11);
\path [->, thick](6) edge node [fill=white] {2} (11);
\path [->, thick](6) edge node [fill=white] {0} (12);
\path [->, thick](7) edge node [fill=white] {1} (12);
\path [->, thick](7) edge node [fill=white] {0} (13);
\path [->, thick](8) edge node [fill=white] {1} (13);
\path [->, thick](8) edge node [fill=white] {2} (14);
\path [->, thick](9) edge node [fill=white] {0} (14);
\path [->, thick](9) edge node [fill=white] {2} (15);
\path [->, thick](10) edge node [fill=white] {0} (15);
\path [->, thick](10) edge node [fill=white] {1} (16);
\path [->, thick](5) edge node [fill=white] {2} (16);

\end{tikzpicture}
    \label{fig:labeled-adjacency-graph}
    \end{subfigure}
    \caption{The $G$-Shi arrangement (left) and its Shi adjacency digraph (right) of the defining graph $K_3$ with edges labelled by the index to be incremented.}
\end{figure}
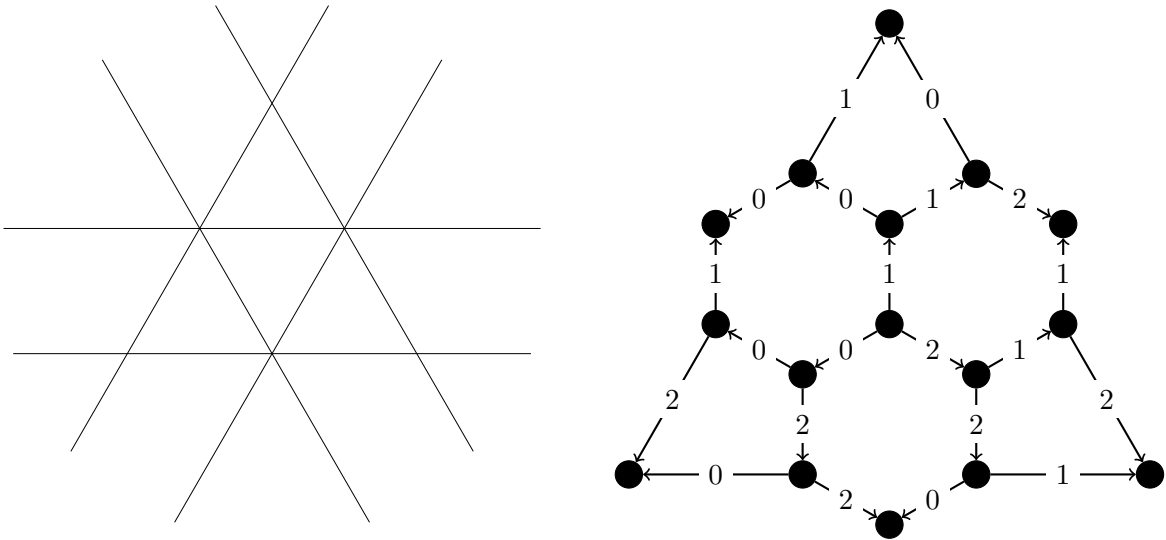



In this paper, Shi adjacency digraphs are mostly used as a computational tool, but they do have a few interesting properties as graphs. $\Gamma \mathscr{S} (G)$ is always acyclic. If $n$ is the number of vertices of $G$, then $\Gamma \mathscr{S} (G)$ is $(n-1)$-edge connected. Finally, we have the following result: 

\begin{theorem}
$\Gamma \mathscr{S}(G)$ is planar if and only if $G$ has $n \leq 3$ vertices.
\end{theorem}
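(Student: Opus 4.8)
The plan is to analyze the two directions separately, using the fact that $\Gamma\mathscr{S}(G)$ is (the $1$-skeleton of) the dual cell complex of the arrangement $\mathscr{S}(G)$ in $\R^n$. The only subtlety is that the arrangement lives in $\R^n$ but all hyperplanes contain the line $x_0 = x_1 = \cdots = x_{n-1}$, so the essential part of the picture is $(n-1)$-dimensional; this is exactly the projection trick used in Figures \ref{fig:labeled-G-Shi-K_3} and \ref{fig:labeled-G-Shi-P_3}.

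\medskip

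\emph{The ``if'' direction ($n \le 3$ implies planar).} For $n \le 2$ the arrangement has at most one hyperplane and $\Gamma\mathscr{S}(G)$ has at most two vertices, so it is trivially planar. For $n = 3$, every hyperplane of $\mathscr{S}(G)$ contains the line $x_0=x_1=x_2$, so after quotienting by this line we get an arrangement of (at most six) lines in the $2$-plane $x_0+x_1+x_2=0$. The Shi adjacency digraph is then the dual graph of a planar subdivision of $\R^2$, and the dual of any planar subdivision of the plane is planar (place one dual vertex in each region — including, if one likes, the unbounded region — and draw one dual edge crossing each bounded facet). Hence $\Gamma\mathscr{S}(G)$ is planar. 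I would cite Figure \ref{fig:labeled-adjacency-graph} as the worked example $G = K_3$.

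\medskip

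\emph{The ``only if'' direction ($n \ge 4$ implies nonplanar).} This is the main obstacle, and I would handle it by exhibiting a $K_5$ or $K_{3,3}$ minor. Since all graphs are connected, for $n\ge 4$ the defining graph $G$ contains a path on four vertices, and it suffices to show $\Gamma\mathscr{S}(P_4)$ (more precisely, the relevant piece of $\Gamma\mathscr{S}(G)$ coming from four hyperplanes) is nonplanar, because adding more hyperplanes only subdivides cells and thus only refines the dual graph — planarity of $\Gamma\mathscr{S}(G)$ would force planarity of every such ``sub-digraph,'' and subdivision/minor operations preserve nonplanarity in the direction we need. Concretely, I would either (a) directly compute $\Gamma\mathscr{S}(P_4)$ using the \lstinline{G_Shi} code and locate a $K_{3,3}$ subdivision in it, or (b) argue structurally: four hyperplanes through a common codimension-$?$ flat, once we pass to the essential $3$-dimensional quotient, cut $\R^3$ into enough cells with enough shared facets that the dual $1$-skeleton contains $K_5$ or $K_{3,3}$. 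Option (a) is the safest and I would lead with it.

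\medskip

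The key steps in order: (1) reduce to the essential arrangement by quotienting out the all-ones line, reducing dimension from $n$ to $n-1$; (2) for $n \le 3$, invoke planarity of duals of plane subdivisions; (3) for $n \ge 4$, reduce to a fixed small configuration of four hyperplanes (coming from $P_4 \subseteq G$) by a monotonicity/minor argument showing that deleting a hyperplane from $\mathscr{S}(G)$ corresponds to contracting the corresponding facets in $\Gamma\mathscr{S}(G)$, hence passes to a minor; (4) exhibit a $K_{3,3}$ or $K_5$ minor in $\Gamma\mathscr{S}(P_4)$ explicitly, finishing by Kuratowski/Wagner. I expect step (3) — making the ``deleting a hyperplane gives a graph minor'' claim precise, including how facets merge when a hyperplane is removed — to be the part requiring the most care, with step (4)'s explicit minor search being routine but tedious (and best offloaded to the computer).
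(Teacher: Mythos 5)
Your overall strategy matches the paper's: project to the plane for $n \le 3$, and for $n \ge 4$ reduce to a fixed four-vertex configuration via a monotonicity argument plus an explicit nonplanarity check. However, your reduction in the ``only if'' direction has a genuine gap: it is not true that every connected graph on $n \ge 4$ vertices contains a path on four vertices. The star $K_{1,3}$ (and every star $K_{1,m}$ with $m \ge 3$) is connected, has at least four vertices, and its longest path has only three vertices, so your appeal to a $P_4$ subgraph fails exactly for these graphs. The paper sidesteps this by brute-forcing nonplanarity for \emph{all} connected graphs on four vertices, not just $P_4$, and only then invoking the subgraph monotonicity. The fix for your argument is small: every connected graph on at least four vertices contains a connected subgraph with four vertices and three edges, i.e.\ a tree on four vertices, and the only two such trees are $P_4$ and $K_{1,3}$. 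By the discussion in Section \ref{sec:SAD-of-trees}, both have the same underlying Shi adjacency graph (the subdivided $3$-cube on vertex set $\{-1,0,1\}^3$), so a single nonplanarity certificate covers both cases. As written, though, step (3) of your plan is built on a false premise.

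Two further remarks. First, your formulation of the monotonicity step as a \emph{minor} relation is actually more defensible than the paper's: deleting hyperplanes merges regions, which corresponds to contracting the dual edges crossing the deleted hyperplanes, and the convexity of each merged region guarantees that each contracted vertex set is connected; so $\Gamma\mathscr{S}(H)$ is a minor of $\Gamma\mathscr{S}(G)$ and nonplanarity propagates upward by Wagner's theorem. (The paper instead asserts that $\Gamma\mathscr{S}(H)$ is a ``subdivision'' of $\Gamma\mathscr{S}(G)$, which is not the right notion for a coarsening of the cell structure.) Second, you never actually exhibit the $K_{3,3}$ or $K_5$ minor in the four-vertex case, deferring it to computation; this is no worse than the paper's ``verify by brute force,'' but it means the one genuinely new content of your step (4) is still owed.
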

\begin{proof}
If $G$ has at most $3$ vertices, the $G$-Shi arrangement can be represented in $\R^2$ by projecting onto the plane $x_0 + x_1 + x_2 = 0$; planarity is then obvious. On the other hand, one may verify by brute force that $\Gamma \mathscr{S}(G)$ is not planar if $n = 4$. Yet if $H$ is a subgraph of $G$, $\Gamma \mathscr{S}(H)$ is a subdivision of $\Gamma \mathscr{S}(G)$ by construction. Therefore, by Kuratowski's Theorem, if $\Gamma \mathscr{S}(H)$ is not planar, then $\Gamma \mathscr{S}(G)$ is not planar. Thus, if $G$ has more than $4$ vertices, $\Gamma \mathscr{S}(G)$ cannot be planar, as desired.
\end{proof}

\subsection{The Shi Adjacency Digraphs of Trees}\label{sec:SAD-of-trees}

In this section, we consider the structure of the $G$-Shi arrangement and the Shi adjacency digraph when $G$ is a tree. 

When constructing the $G$-Shi arrangement, we include the pair of parallel hyperplanes $x_i-x_j=0$ and $x_i-x_j=1$ for each edge $\{i,j\}$, where $i<j$, in the defining graph $G$. 
Each of these pairs of hyperplanes divides $\mathbb{R}^n$ into three distinct regions: a region $0<x_i-x_j<1$ between the hyperplanes, and two regions $x_i-x_j<0$ and $x_i-x_j>1$ on each side of the hyperplanes. Since trees are acyclic, this choice is made independently for each pair of parallel hyperplanes. Thus, as a tree on $n$ vertices has $n-1$ edges, the result is that the Shi adjacency digraphs of trees take on the form of the subdivided $(n-1)$-dimensional cube $[-1,1]^{n-1}$ with vertices $\{-1,0,1\}^{n-1}$, as demonstrated in Figure \ref{fig:adjacency-trees-all}.

Furthermore, the base region lies between each pair of parallel hyperplanes; that is, it lies in the region $0<x_i-x_j<1$ for each $\{i,j\} \in E$. Therefore, whenever a vertex lies between a pair of parallel hyperplanes (that is, it satisfies $0<x_i-x_j<1$), it has two outward-bound edges: one pointing to a vertex satisfying $x_i-x_j<0$ and one pointing to a vertex satisfying $x_i-x_j>1$. This demonstrates that the edges of the cube are always directed towards the corners.

\begin{figure}[H]
    \centering
    \resizebox{.95\textwidth}{!}{\begin{tikzpicture}
		\node [shape=circle,draw=black, fill=black] (0) at (10.5, -0.25) {};
		\node [shape=circle,draw=black, fill=black] (1) at (12.5, 0.25) {};
		\node [shape=circle,draw=black, fill=black] (2) at (8.5, -0.75) {};
		\node [shape=circle,draw=black, fill=black] (3) at (10, -1.5) {};
		\node [shape=circle,draw=black, fill=black] (4) at (12, -1) {};
		\node [shape=circle,draw=black, fill=black] (5) at (14, -0.5) {};
		\node [shape=circle,draw=black, fill=black] (6) at (7, 0) {};
		\node [shape=circle,draw=black, fill=black] (7) at (9, 0.5) {};
		\node [shape=circle,draw=black, fill=black] (8) at (11, 1) {};
		\node [shape=circle,draw=black, fill=black] (9) at (11, 3) {};
		\node [shape=circle,draw=black, fill=black] (10) at (11, 5) {};
		\node [shape=circle,draw=black, fill=black] (11) at (12.5, 2.25) {};
		\node [shape=circle,draw=black, fill=black] (12) at (12.5, 4.25) {};
		\node [shape=circle,draw=black, fill=black] (13) at (14, 1.5) {};
		\node [shape=circle,draw=black, fill=black] (14) at (14, 3.5) {};
		\node [shape=circle,draw=black, fill=black] (15) at (10.5, 1.75) {};
		\node [shape=circle,draw=black, fill=black] (16) at (10.5, 3.75) {};
		\node [shape=circle,draw=black, fill=black] (17) at (9, 2.5) {};
		\node [shape=circle,draw=black, fill=black] (18) at (9, 4.5) {};
		\node [shape=circle,draw=black, fill=black] (19) at (7, 2) {};
		\node [shape=circle,draw=black, fill=black] (20) at (7, 4) {};
		\node [shape=circle,draw=black, fill=black] (21) at (8.5, 1.25) {};
		\node [shape=circle,draw=black, fill=black] (22) at (8.5, 3.25) {};
		\node [shape=circle,draw=black, fill=black] (23) at (10, 0.5) {};
		\node [shape=circle,draw=black, fill=black] (24) at (12, 1) {};
		\node [shape=circle,draw=black, fill=black] (25) at (12, 3) {};
		\node [shape=circle,draw=black, fill=black] (26) at (10, 2.5) {};
		\node [shape=circle,draw=black, fill=black] (27) at (2, 2) {};
		\node [shape=circle,draw=black, fill=black] (28) at (2, 0) {};
		\node [shape=circle,draw=black, fill=black] (29) at (2, 4) {};
		\node [shape=circle,draw=black, fill=black] (30) at (4, 4) {};
		\node [shape=circle,draw=black, fill=black] (31) at (4, 2) {};
		\node [shape=circle,draw=black, fill=black] (32) at (4, 0) {};
		\node [shape=circle,draw=black, fill=black] (33) at (0, 0) {};
		\node [shape=circle,draw=black, fill=black] (34) at (0, 2) {};
		\node [shape=circle,draw=black, fill=black] (35) at (0, 4) {};
		\node [shape=circle,draw=black, fill=black] (36) at (-3, 2) {};
		\node [shape=circle,draw=black, fill=black] (37) at (-3, 0) {};
		\node [shape=circle,draw=black, fill=black] (38) at (-3, 4) {};
		\node [shape=circle,draw=black, fill=black] (39) at (-6, 2) {};
		\path [->, thick] (0) edge (4);
		\path [->, thick] (0) edge (7);
		\path [->, thick] (7) edge (8);
		\path [->, thick] (0) edge (1);
		\path [->, thick] (4) edge (5);
		\path [->, thick] (4) edge (3);
		\path [->, thick] (0) edge (2);
		\path [->, thick] (7) edge (6);
		\path [->, thick] (2) edge (6);
		\path [->, thick] (2) edge (3);
		\path [->, thick] (1) edge (8);
		\path [->, thick] (1) edge (5);
		\path [->, thick] (11) edge (1);
		\path [->, thick] (9) edge (8);
		\path [->, thick] (11) edge (9);
		\path [->, thick] (11) edge (13);
		\path [->, thick] (13) edge (5);
		\path [->, thick] (13) edge (14);
		\path [->, thick] (11) edge (12);
		\path [->, thick] (9) edge (10);
		\path [->, thick] (12) edge (14);
		\path [->, thick] (12) edge (10);
		\path [->, thick] (18) edge (10);
		\path [->, thick] (16) edge (12);
		\path [->, thick] (15) edge (0);
		\path [->, thick] (15) edge (17);
		\path [->, thick] (15) edge (11);
		\path [->, thick] (17) edge (9);
		\path [->, thick] (15) edge (16);
		\path [->, thick] (17) edge (18);
		\path [->, thick] (18) edge (20);
		\path [->, thick] (17) edge (19);
		\path [->, thick] (15) edge (21);
		\path [->, thick] (19) edge (6);
		\path [->, thick] (19) edge (20);
		\path [->, thick] (21) edge (22);
		\path [->, thick] (21) edge (2);
		\path [->, thick] (21) edge (19);
		\path [->, thick] (22) edge (20);
		\path [->, thick] (16) edge (22);
		\path [->, thick] (16) edge (18);
		\path [->, thick] (17) edge (7);
		\path [->, thick] (24) edge (13);
		\path [->, thick] (15) edge (24);
		\path [->, thick] (24) edge (4);
		\path [->, thick] (23) edge (3);
		\path [->, thick] (21) edge (23);
		\path [->, thick] (24) edge (23);
		\path [->, thick] (22) edge (26);
		\path [->, thick] (25) edge (26);
		\path [->, thick] (23) edge (26);
		\path [->, thick] (24) edge (25);
		\path [->, thick] (16) edge (25);
		\path [->, thick] (25) edge (14);
		\path [->, thick] (27) edge (29);
		\path [->, thick] (27) edge (28);
		\path [->, thick] (29) edge (30);
		\path [->, thick] (28) edge (32);
		\path [->, thick] (27) edge (34);
		\path [->, thick] (29) edge (35);
		\path [->, thick] (28) edge (33);
		\path [->, thick] (34) edge (33);
		\path [->, thick] (34) edge (35);
		\path [->, thick] (31) edge (30);
		\path [->, thick] (31) edge (32);
		\path [->, thick] (27) edge (31);
		\path [->, thick] (36) edge (38);
		\path [->, thick] (36) edge (37);
\end{tikzpicture}}
    \caption{The Shi adjacency digraph of trees on 1, 2, 3, and 4 vertices.}
    \label{fig:adjacency-trees-all}
\end{figure}
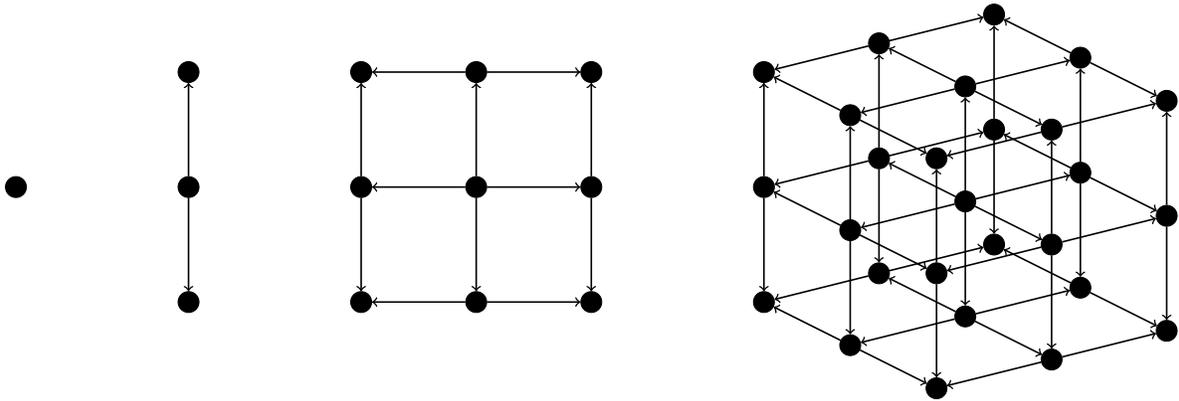


\section{Path Graphs and the Three Rows Game}\label{sec:four}
In this section, we introduce the Three Rows Game, a tool for analyzing multiplicities in the Pak-Stanley labelling. We will use the specific case of the path graph to demonstrate how the Three Rows Game translates the problem of computing Pak-Stanley labels into a combinatorial model. Later, we will generalize the Three Rows Game to be played on trees and general graphs. 

For the remainder of this section, we label the vertices of the path graph $P_n$ as follows unless otherwise stated.
\[
\begin{tikzpicture}
\draw 
    (0,0) to (3.3,0)
    (4.1,0) to (5.9,0);
\fill 
    (0,0) circle (3pt)
    (1.5,0) circle  (3pt)
    (3,0) circle  (3pt)
    (4.4,0) circle (3pt)
    (5.9,0) circle (3pt)
    (3.5,0) circle (1pt)
    (3.7,0) circle (1pt)
    (3.9,0) circle (1pt);
    \node at (0,-0.5) {$0$};
    \node at (1.5,-0.5) {$1$};
    \node at (3,-0.5) {$2$};
    \node at (4.4,-0.5) {$n-2$};
    \node at (5.9,-0.5) {$n-1$};
\end{tikzpicture}
\]

\subsection{Superstable Configurations on $(P_n)_\bullet$}
We begin by counting the number of superstable configurations on $(P_n)_\bullet$. 

First, let $F_n$ denote the \textit{$n$th Fibonacci number}, given by $F_0=F_1=1$ and $F_n = F_{n-1} + F_{n-2}$. We begin with two identities of Fibonacci numbers.

\begin{lemma}
\label{lem:identities-of-fibonacci-numbers}
Let $F_n$ denote the $n$th Fibonacci number. Then, 
\begin{equation*}
    F_n = 2F_{n-1} - F_{n-3} \qquad \text{ and } \qquad F_n = 3F_{n-2}-F_{n-4}.
\end{equation*}
\end{lemma}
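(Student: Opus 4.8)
The statement is the pair of identities $F_n = 2F_{n-1} - F_{n-3}$ and $F_n = 3F_{n-2} - F_{n-4}$, where the convention here is $F_0 = F_1 = 1$ and $F_n = F_{n-1} + F_{n-2}$. Both are elementary consequences of the recurrence, and the plan is simply to unfold the recurrence a couple of times. For the first identity, start from $F_n = F_{n-1} + F_{n-2}$ and rewrite $F_{n-2} = F_{n-1} - F_{n-3}$ (which is just the recurrence $F_{n-1} = F_{n-2} + F_{n-3}$ solved for $F_{n-2}$); substituting gives $F_n = F_{n-1} + (F_{n-1} - F_{n-3}) = 2F_{n-1} - F_{n-3}$.

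For the second identity, I would start from the first identity $F_n = 2F_{n-1} - F_{n-3}$ and expand $F_{n-1} = F_{n-2} + F_{n-3}$, obtaining $F_n = 2F_{n-2} + 2F_{n-3} - F_{n-3} = 2F_{n-2} + F_{n-3}$. Then replace the remaining $F_{n-3}$ using $F_{n-3} = F_{n-2} - F_{n-4}$ (again the recurrence $F_{n-2} = F_{n-3} + F_{n-4}$ rearranged), which yields $F_n = 2F_{n-2} + F_{n-2} - F_{n-4} = 3F_{n-2} - F_{n-4}$. Alternatively one can get the same thing by iterating the first identity, but the direct substitution is cleanest.

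There is essentially no obstacle here; the only thing to be slightly careful about is the indexing convention (the shifted Fibonacci numbers $F_0 = F_1 = 1$) and the requirement that $n$ be large enough for the referenced terms to be defined — one should state the identities for $n \geq 3$ in the first case and $n \geq 4$ in the second, or simply note they hold whenever all subscripts are nonnegative. A fully rigorous write-up could also be phrased as an induction on $n$, but since each identity follows from at most two applications of the defining recurrence, a direct algebraic verification is preferable and shorter.
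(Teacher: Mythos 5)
Your proposal is correct and takes essentially the same approach as the paper: both identities are obtained by a couple of direct substitutions of the defining recurrence (the paper derives the second identity by shifting indices in the first and adding $F_{n-2}$, while you substitute twice into the first identity, but this is an immaterial difference). Your remark about restricting to $n$ large enough for all subscripts to be defined is a reasonable precaution that the paper leaves implicit.
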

\begin{proof}
By definition, $F_{n-2} + F_{n-3} = F_{n-1}$ implies $F_{n-2} = F_{n-1} - F_{n-3}$. Combining these two equalities yields $F_n = 2F_{n-1} - F_{n-3}$. Notice that shifting indices yields $F_{n-1} = 2F_{n-2} - F_{n-4}$. Thus, $F_n = 3F_{n-2} - F_{n-4}$.
\end{proof}
Next, we work through some determinant computations.

\begin{lemma}
\label{lem:lemma-2}
Let $M_n$ denote the $n\times n$ matrix given by
\begin{equation*}
    \begin{bmatrix}
        3 & -1 & 0 & \cdots & 0 & 0 \\
        -1 & 3 & -1 & \cdots & 0 & 0 \\
        0 & -1 & 3 & \cdots & 0 & 0 \\
        \vdots & \vdots & \vdots & \ddots & \vdots & \vdots \\
        0 & 0 & 0 & \cdots & 3 & -1 \\
        0 & 0 & 0 & \cdots & -1 & 2
    \end{bmatrix}.
\end{equation*}
That is, $M_n$ has a 2 in the $n$th row, $n$th column, 3's along the diagonal in rows $1,\dots n-1$. It has $-1$'s along the super and subdiagonal, and 0's elsewhere. Then, $\det(M_n) = F_{2n+1}$ for any positive integer $n$.
\end{lemma}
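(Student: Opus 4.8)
The plan is to reduce $\det(M_n)$ to a two-term linear recurrence and then identify it with the Fibonacci numbers via the identities in Lemma~\ref{lem:identities-of-fibonacci-numbers}. Concretely, I would aim to show that $\det(M_n) = 3\det(M_{n-1}) - \det(M_{n-2})$. Since Lemma~\ref{lem:identities-of-fibonacci-numbers} gives $F_m = 3F_{m-2} - F_{m-4}$, the sequence of odd-indexed Fibonacci numbers satisfies exactly this recurrence, so the claim follows by induction once the two base cases $\det(M_1)$ and $\det(M_2)$ are checked directly against the closed form.

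To obtain the recurrence I would introduce the auxiliary family $N_k$, the $k\times k$ tridiagonal matrix with $3$'s all along the diagonal and $-1$'s on the super- and subdiagonals (so $N_k$ is $M_k$ with the corner entry $2$ replaced by a $3$). Expanding $\det(N_k)$ along its first row gives the classical continuant recurrence $\det(N_k) = 3\det(N_{k-1}) - \det(N_{k-2})$ with $\det(N_0) = 1$ and $\det(N_1) = 3$; together with Lemma~\ref{lem:identities-of-fibonacci-numbers} and the base cases, this identifies $\det(N_k)$ with the appropriate odd-indexed Fibonacci number. Then I would expand $\det(M_n)$ along its last row, whose only nonzero entries are the $-1$ in column $n-1$ and the $2$ in column $n$: the minor of the corner entry is exactly $N_{n-1}$ (the top-left $(n-1)\times(n-1)$ block of $M_n$), while a second expansion of the minor of the $-1$ entry yields $\pm\det(N_{n-2})$. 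Tracking the cofactor signs produces $\det(M_n) = 2\det(N_{n-1}) - \det(N_{n-2})$, after which the other identity of Lemma~\ref{lem:identities-of-fibonacci-numbers}, $F_m = 2F_{m-1} - F_{m-3}$, collapses the right-hand side to a single Fibonacci number. (Equivalently, substituting the continuant recurrence for $N_k$ into this expression recovers $\det(M_n) = 3\det(M_{n-1}) - \det(M_{n-2})$ directly, bypassing the auxiliary family.)

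I expect the only real obstacle to be bookkeeping rather than ideas: one must handle the cofactor signs correctly across the two nested expansions and keep the index shifts between $\det(M_n)$, $\det(N_k)$, and the Fibonacci subscripts straight, where small off-by-one slips are easy to introduce. Once the recurrence $\det(M_n) = 3\det(M_{n-1}) - \det(M_{n-2})$ is in hand, the identification with the claimed Fibonacci number is immediate from Lemma~\ref{lem:identities-of-fibonacci-numbers} and the base cases.
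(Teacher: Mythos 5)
Your proposal is correct and rests on the same idea as the paper's proof: derive the three-term recurrence $\det(M_n)=3\det(M_{n-1})-\det(M_{n-2})$ by cofactor expansion and close the induction with the identity $F_m=3F_{m-2}-F_{m-4}$ from Lemma~\ref{lem:identities-of-fibonacci-numbers}. The only difference is cosmetic: the paper expands along the \emph{first} row, whose minors stay inside the family $\{M_k\}$ (deleting the first row and column preserves the corner entry $2$), so it never needs your auxiliary continuant family $N_k$ or the intermediate formula $\det(M_n)=2\det(N_{n-1})-\det(N_{n-2})$.
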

\begin{proof}
The result follows by induction. The base case $\det(M_1) = F_3 = 2$ is immediate. Therefore, assume that $\det(M_k) = F_{2k+1}$ for all $k < n$. Then, by cofactor expansion along the top row, 
\begin{equation*}
   \det(M_n) = 3 \det(M_{n-1}) + \begin{vmatrix}
        -1 & -1 & \cdots & 0 & 0 \\
        0 & 3 & \cdots & 0 & 0 \\
        \vdots & \vdots & \ddots & \vdots & \vdots & \\
        0 & 0 & \cdots & 3 & -1 \\
        0 & 0 & \cdots & -1 & 2
    \end{vmatrix}=3 \det(M_{n-1})-\det(M_{n-2}).
\end{equation*}
Thus, $\det(M_n) = 3F_{2n-1} - F_{2n-3} = F_{2n+1}$ by Lemma \ref{lem:identities-of-fibonacci-numbers}, as desired.
\end{proof}

\begin{lemma}
\label{lem:lemma-3}
Let $M'_n$ denote the $n \times n$ matrix given by
\begin{equation*}
    \begin{bmatrix}
        2 & -1 & 0 & \cdots & 0 & 0 \\
        -1 & 3 & -1 & \cdots & 0 & 0 \\
        0 & -1 & 3 & \cdots & 0 & 0 \\
        \vdots & \vdots & \vdots & \ddots & \vdots & \vdots \\
        0 & 0 & 0 & \cdots & 3 & -1 \\
        0 & 0 & 0 & \cdots & -1 & 2
    \end{bmatrix}.
\end{equation*}
Then, $\det(M'_n) = F_{2n}$ for any positive integer $n$.
\end{lemma}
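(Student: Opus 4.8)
The plan is to reduce the computation of $\det(M'_n)$ to that of $\det(M_n)$, which is already pinned down by Lemma \ref{lem:lemma-2}, using a single cofactor expansion along the first row. Since the only nonzero entries of the first row of $M'_n$ are $a_{11}=2$ and $a_{12}=-1$, expansion gives
\begin{equation*}
    \det(M'_n) = 2\det(A) + \det(B),
\end{equation*}
where $A$ is the minor obtained by deleting row $1$ and column $1$ and $B$ is the minor obtained by deleting row $1$ and column $2$; here the cofactor sign $(-1)^{1+2}$ on the second term exactly cancels the entry $a_{12}=-1$. Note that a direct induction on $M'_n$ alone does not close, because deleting the first row and column does not return a matrix of the same shape, which is why leaning on Lemma \ref{lem:lemma-2} is the cleanest route.

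The next step is to identify the two minors. Deleting the first row and first column of $M'_n$ leaves precisely the matrix $M_{n-1}$ of Lemma \ref{lem:lemma-2}: a tridiagonal matrix with $3$'s on the diagonal in all but the last position, a $2$ in the bottom-right corner, and $-1$'s on the super- and subdiagonal. Hence $\det(A)=\det(M_{n-1})=F_{2n-1}$. Deleting the first row and second column of $M'_n$ instead leaves an $(n-1)\times(n-1)$ matrix whose first column is $(-1,0,\dots,0)^{\top}$ (inherited from the first column of $M'_n$ with its top entry removed) and whose lower-right $(n-2)\times(n-2)$ block is exactly $M_{n-2}$. Expanding this minor along its first column then yields $\det(B) = -\det(M_{n-2}) = -F_{2n-3}$.

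Combining the two computations gives $\det(M'_n) = 2F_{2n-1} - F_{2n-3}$, and applying the identity $F_m = 2F_{m-1}-F_{m-3}$ from Lemma \ref{lem:identities-of-fibonacci-numbers} with $m = 2n$ produces $\det(M'_n) = F_{2n}$, as claimed. The remaining loose ends are just the small cases: for $n=1$ one checks $\det(M'_1)=2$ by hand, and for $n=2$ the argument still runs provided one reads $M_0$ as the empty matrix with $\det(M_0)=1$. I do not expect a real obstacle; the one place that demands care is verifying the claimed block structure of the minor $B$ and tracking its sign correctly, which is routine but slightly fiddly bookkeeping rather than a conceptual difficulty.
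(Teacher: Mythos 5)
Your proof is correct and follows essentially the same route as the paper's: a single cofactor expansion along the first row giving $\det(M'_n)=2\det(M_{n-1})-\det(M_{n-2})=2F_{2n-1}-F_{2n-3}$, finished off by the identity $F_m=2F_{m-1}-F_{m-3}$ from Lemma \ref{lem:identities-of-fibonacci-numbers}. Your explicit handling of the $n=1$ and $n=2$ base cases (reading $\det(M_0)=1$) is a small bookkeeping point the paper glosses over, but the argument is the same.
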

\begin{proof}
The result follows from cofactor expansion. Indeed, 
\begin{equation*}
    \det(M'_n) = 2 \begin{vmatrix}
        3 & -1 & \cdots & 0 & 0 \\
        -1 & 3 & \cdots & 0 & 0 \\
        \vdots & \vdots & \ddots & \vdots & \vdots \\
        0 & 0 & \cdots & 3 & -1 \\
        0 & 0 & \cdots & -1 & 2
    \end{vmatrix} + \begin{vmatrix}
        -1 & -1 & \cdots & 0 & 0 \\
        0 & 3 & \cdots & 0 & 0 \\
        \vdots & \vdots & \ddots & \vdots & \vdots & \\
        0 & 0 & \cdots & 3 & -1 \\
        0 & 0 & \cdots & -1 & 2
    \end{vmatrix}
\end{equation*}
where the first summand is equal to $2\det(M_{n-1})$, while the second is equal to $-\det(M_{n-2})$ from Lemma \ref{lem:lemma-2}. But Lemma \ref{lem:lemma-2} shows that $\det(M_{n-1}) = F_{2n-1}$ and $-\det(M_{n-2}) = -F_{2n-3}$. Thus, $\det(M'_n) = 2F_{2n-1} - F_{2n-3} = F_{2n}$ by Lemma \ref{lem:identities-of-fibonacci-numbers}.
\end{proof}

\begin{theorem}\label{thm:superstable-configurations-of-P_n}
The number of superstable configurations of $(P_n)_\bullet$ is $F_{2n}$ for any positive integer $n$.
\end{theorem}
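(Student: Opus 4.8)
The plan is to compute the number of superstable configurations of $(P_n)_\bullet$ via a reduced-Laplacian determinant and then recognize that determinant as one already evaluated. The starting point is the standard fact — which follows from Theorem~\ref{prop:critical-superstable-duality} together with the identification of critical configurations with the sandpile group of order $\det\Delta_q$, or equivalently from the matrix--tree theorem — that for any graph $H$ with sink $q$, the number of superstable configurations of $H$ equals $\det\Delta_q(H)$, the number of spanning trees of $H$. So the whole problem reduces to computing $\det\Delta_q\big((P_n)_\bullet\big)$.

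Next I would write down $\Delta_q\big((P_n)_\bullet\big)$ explicitly. Its rows and columns are indexed by the non-sink vertices $0,1,\dots,n-1$. The diagonal entry in position $i$ is the degree of vertex $i$ in $(P_n)_\bullet$: this is $2$ when $i\in\{0,n-1\}$ (one path-neighbor plus the edge to $q$) and $3$ when $1\le i\le n-2$ (two path-neighbors plus the edge to $q$). The off-diagonal entry in position $(i,j)$ equals $-1$ exactly when $\{i,j\}$ is an edge of $P_n$, i.e.\ when $|i-j|=1$, and $0$ otherwise. Hence $\Delta_q\big((P_n)_\bullet\big)$ is precisely the tridiagonal matrix $M'_n$ of Lemma~\ref{lem:lemma-3}.

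Finally, Lemma~\ref{lem:lemma-3} gives $\det(M'_n)=F_{2n}$, which completes the proof. (As a sanity check, this also forces the recursion $s_n = 3s_{n-1}-s_{n-2}$ for the count $s_n$, matching the identity $F_{2n}=3F_{2n-2}-F_{2n-4}$ from Lemma~\ref{lem:identities-of-fibonacci-numbers}; one could alternatively prove the theorem by establishing this recursion directly from a decomposition of superstable configurations on $(P_n)_\bullet$, but the Laplacian route is shorter given Lemmas~\ref{lem:lemma-2} and~\ref{lem:lemma-3}.)

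The only step with genuine content beyond bookkeeping is the first one: reducing the count of superstable configurations to a reduced-Laplacian determinant. So the main thing to be careful about is stating (or briefly re-deriving from Theorem~\ref{prop:critical-superstable-duality}) the equality between the number of superstable configurations of a graph and $\det\Delta_q$; everything afterward is the routine identification of $\Delta_q\big((P_n)_\bullet\big)$ with $M'_n$ and an appeal to the already-proven determinant evaluation.
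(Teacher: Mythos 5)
Your proposal is correct and follows essentially the same route as the paper's own proof: both reduce the count to $\det\Delta_q\big((P_n)_\bullet\big)$ via the standard superstable-counting theorem (Theorem 4.2.2 of Klivans), identify $\Delta_q\big((P_n)_\bullet\big)$ with the matrix $M'_n$, and invoke Lemma~\ref{lem:lemma-3}. Your explicit verification of the degree entries and the remark on the recursion are fine additions but do not change the argument.
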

\begin{proof}
Theorem 4.2.2 of \cite{Klivans} states that the number of superstable configurations of $(P_n)_\bullet$ is equal to the determinant of the reduced Laplacian $\Delta_q$ of $(P_n)_\bullet$ \cite{Klivans}. By definition, $\Delta_q((P_n)_\bullet) = M'_n$. Thus, by Lemma \ref{lem:lemma-3}, $\Delta_q((P_n)_\bullet)$ has determinant $F_{2n}$. 
\end{proof}

We refer the reader to Appendix \ref{subsec:app-b-the-path-graph} for a direct combinatorial proof.

Next, we offer a characterization of superstable configurations on $(P_n)_\bullet$.

\begin{definition}[$2$-Free Block]\label{def:2-free-block}
Given a sequence of integers ${\bf a} = (a_1,\dots,a_n)\in\{0,1,2\}^n$, we call a maximal contiguous subsequence consisting of $0$'s and $1$'s that occur to the left or right of a $2$ a \textit{$2$-free block}. Note that we also consider empty subsequences as 2-free blocks. Thus, if there are $m-1$ occurrences of $2$ in ${\bf a}$, then ${\bf a}$ has $m$ $2$-free blocks. 
\end{definition}

\begin{example}\label{ex:2-free-block}
If ${\bf a} = (2,0,1,2,2,0,2)$, then the $2$-free blocks are $\pi_1 = ()$, $\pi_2 = (0,1)$, $\pi_3 = ()$, $\pi_4 = (0)$, and $\pi_5=()$.
\end{example}

\begin{lemma}[Superstability Criterion]\label{lem:Carls-criterion}
Suppose ${\bf a} \in \{0,1,2\}^n$ has 2-free blocks $\pi_1,\pi_2,\cdots,\pi_m$. Then ${\bf a}$ is a superstable configuration on $(P_n)_\bullet$ if and only if there is a $0$ in each 2-free block $\pi_i$.
\end{lemma}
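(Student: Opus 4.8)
The plan is to prove the two implications separately, in each case appealing to whichever characterization of superstability is most convenient: for ``$\Leftarrow$'' I will use Dhar's Burning Algorithm (Theorem~\ref{def:dhars-burning-algorithm-detects-superstability}), and for ``$\Rightarrow$'' I will use the defining inequality of a $G_\bullet$-parking function (Definition~\ref{def:G-parking-function}, applicable by Theorem~\ref{thm:superstable-configurations-are-G-parking-functions}). The only structural facts I need about $(P_n)_\bullet$ are that the sink $q$ is adjacent to every vertex, so $\deg(0)=\deg(n-1)=2$ and every interior vertex has degree $3$, and that for any $S \subseteq \{0,\dots,n-1\}$ and $v \in S$ the edge $v$--$q$ always contributes $1$ to $\outdeg_S(v)$.

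For ``$\Rightarrow$'' I prove the contrapositive: if some $2$-free block contains no $0$, I produce a nonempty $S$ with $\outdeg_S(v) \le a_v$ for every $v \in S$, which by Definition~\ref{def:G-parking-function} shows ${\bf a}$ is not superstable. If the offending block is empty, it is pinned at an end (forcing $a_0 = 2$ or $a_{n-1} = 2$, and then $S = \{0\}$ or $S=\{n-1\}$ works since $\deg(0)=\deg(n-1)=2$) or between two consecutive $2$'s at positions $i,i+1$ (and then $S = \{i,i+1\}$ works). If the block is nonempty it is a run $j,j+1,\dots,k$ of $1$'s, and I take $S = \{j-1,j,\dots,k,k+1\}\cap\{0,\dots,n-1\}$, i.e. the block together with whichever of its bounding $2$'s exist. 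A direct check of the handful of vertex types --- interior $1$'s (only the $q$-edge leaves $S$), boundary $1$'s (one path-edge leaves $S$, but the adjacent $2$ lies in $S$), and bounding $2$'s (one path-edge and the $q$-edge leave $S$) --- shows $\outdeg_S(v) \le a_v$ throughout.

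For ``$\Leftarrow$'' I run Dhar's Burning Algorithm from $q$ and show every vertex ignites. After the first round every $0$-vertex is burning. Now fix a $2$-free block; by hypothesis it is nonempty and contains a $0$-vertex $s$, and all its entries lie in $\{0,1\}$. Inducting outward from $s$, a value-$1$ vertex adjacent to an already-burning vertex of the block sees two burning incident edges (the path edge plus the $q$-edge) against a single firefighter, hence ignites; so the whole block burns. Because every block is nonempty, there are no two consecutive $2$'s and neither endpoint is a $2$; thus every $2$-vertex is interior with both path-neighbors in fully burnt blocks, so it sees three burning incident edges against two firefighters and ignites. Every vertex burns, so Theorem~\ref{def:dhars-burning-algorithm-detects-superstability} gives superstability.

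I expect the genuine content to be the bookkeeping around empty $2$-free blocks: the hypothesis ``a $0$ in every block'' is exactly what forbids consecutive $2$'s and endpoint $2$'s, which is precisely what lets Dhar's fire cross every $2$ in the ``$\Leftarrow$'' direction; and conversely an empty block wedged between $2$'s (or at an end) is exactly where Dhar's fire --- equivalently, where the cluster-fire inequality --- stalls. Beyond that, the two routine pieces are the finite case check in the second paragraph and the outward induction in the third, with the degree-$2$ endpoints $0$ and $n-1$ being the only thing one must not conflate with the degree-$3$ interior vertices.
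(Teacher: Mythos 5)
Your proof is correct and follows essentially the same route as the paper's: necessity by exhibiting a legal cluster-fire at a $0$-free block together with its bordering $2$s (your parking-function inequality check is definitionally the same thing), and sufficiency by Dhar's Burning Algorithm with the fire spreading outward from the $0$s and then crossing the $2$s. Your write-up is in fact more careful than the paper's on the edge cases it glosses over --- the empty blocks in the necessity direction, and the observation that the hypothesis rules out endpoint $2$s and consecutive $2$s, which is exactly what guarantees each $2$-vertex really has three burning incident edges.
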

\begin{proof}
For necessity, note that if any 2-free block $\pi_i$ contains only $1$s, then firing at $\pi_i$ as well as its bordering $2$s is a legal cluster-fire. For sufficiency, we use Dhar's Burning Algorithm. Indeed, if the Superstability Criterion holds, the fire spreads to each vertex with $0$ chips, and any vertex in any of the $2$-free blocks is eventually on fire. By this point, any of the vertices with $2$ chips have $3$ incident burning edges, so they light on fire too. Therefore, by Theorem \ref{def:dhars-burning-algorithm-detects-superstability}, ${\bf a}$ is superstable.
\end{proof}

\subsection{The Three Rows Game}\label{sec:three-rows-game}
Next, we define the Three Rows Game, and explain why it encodes the labelling of the regions of the $G$-Shi arrangement according to the Pak-Stanley algorithm.

\begin{definition}[Three Rows Game]\label{def:the-three-rows-game}
The \textit{Three Rows Game} is played on the following board: 
\begin{table}[H]
    \centering
    \begin{tabular}{|l|l|l|l|l|l|l|}
    \hline
    0 & 1 & 2 & 3 & $\cdots$ & $n-3$ & $n-2$ \\ \hline
     &  &  &  &  &  &  \\ \hline
    1 & 2 & 3 & 4 & $\cdots$ & $n-2$ & $n-1$ \\ \hline
    \end{tabular}
    \label{tab:the-board-of-the-three-rows-game}
\end{table}
The game is played by choosing an entry in each column. The \textit{history} ${\bf h}$ of the game is the sequence of squares chosen (top, middle, or bottom), and the \textit{outcome} ${\bf o}({\bf h})$ or ${\bf o}$ of the game is the multiset of numbers chosen, ignoring any choice of middle squares. 
\end{definition}

In this section, our principal object of study is the \textit{multiplicity} of an outcome of the game.
\begin{definition}[Equivalent Histories]\label{def:equivalent-histories}
Two histories $\textbf{h}$ and $\textbf{h}'$ are \textit{equivalent}, denoted $\textbf{h}\sim {\bf h}'$ if ${\bf o}(\textbf{h})={\bf o}(\textbf{h}')$. This gives an equivalence relation, the equivalence classes of which are denoted $[{\bf h}]$.
\end{definition}
\begin{definition}[Multiplicity of an Outcome]\label{def:repetition-of-an-outcome}
Given an outcome ${\bf o}$, the \textit{multiplicity $\mu({\bf o})$ of ${\bf o}$} is the number of histories ${\bf h}$ such that ${\bf o}({\bf h}) = {\bf o}$. Furthermore, we adopt the convention that $\mu({\bf h})$ denotes $\mu({\bf o}({\bf h}))$. Notice that $\mu({\bf h})$ is the size of the equivalence class $[{\bf h}]$.
\end{definition}

\begin{example}\label{ex:the-three-rows-game}
Consider the following equivalent histories ${\bf h}\sim {\bf h'}$ of the Three Rows Game on 4 columns with ${\bf o}({\bf h})=\{0,2,4\}={\bf o}({\bf h'})$.
\begin{center}
    ${\bf h} =$ \begin{tabular}{|l|l|l|l|}
    \hline
    \cellcolor[HTML]{C0C0C0}0 & 1 & 2 & 3 \\ \hline
    &  & \cellcolor[HTML]{C0C0C0} &  \\ \hline
    1 & \cellcolor[HTML]{C0C0C0}2 & 3 & \cellcolor[HTML]{C0C0C0}4 \\ \hline
    \end{tabular}
    \quad \text{and} \quad
    ${\bf h}' =$ \begin{tabular}{|l|l|l|l|}
    \hline
    \cellcolor[HTML]{C0C0C0}0 & 1 & \cellcolor[HTML]{C0C0C0}2 & 3 \\ \hline & \cellcolor[HTML]{C0C0C0} &  &  \\ \hline
    1 & 2 & 3 & \cellcolor[HTML]{C0C0C0}4 \\ \hline
    \end{tabular}
\end{center}
Moreover, one can verify that these are the only two histories that yield outcome ${\bf o}=\{0,2,4\}$ so that $\mu({\bf o})=2$.
\end{example}

The reason why this game and the multiplicities of its outcomes are worth studying is the following correspondence. 

\begin{figure}[H]
    \centering
    \begin{tabular}{|l|}
    \hline
    Histories of the Three Rows Game \\ \hline
    Outcomes of the Three Rows Game \\ \hline
    \end{tabular} \begin{tabular}{l}
    $\Longleftrightarrow$ \\
    $\Longleftrightarrow$
    \end{tabular} \begin{tabular}{|l|}
    \hline
    Regions/chambers in the $G$-Shi arrangement \\ \hline
    Pak-Stanley labels in the $G$-Shi arrangement \\ \hline
    \end{tabular}
    \label{tab:correspondence-of-the-three-rows-game-and-G-Shi-arrangement}
\end{figure}
In particular, repetitions of Pak-Stanley labels in the Shi adjacency digraph correspond to instances where multiple histories of the Three Rows Game induce the same outcome.

To understand this correspondence, recall that each edge $\{i,i+1\}$ in the path graph $P_n$ gives a pair of parallel hyperplanes $x_i - x_{i+1} = 0$ and $x_i - x_{i+1} = 1$. Each region is uniquely determined by whether or not it lies ``before" both of these hyperplanes (it satisfies $x_i - x_{i+1} < 0$), between these hyperplanes (it satisfies $0 < x_i - x_{i+1} < 1$), or ``after" these hyperplanes (it satisfies $x_i - x_{i+1} > 1$), for $i = 0,\dots,n-2$. Therefore, choosing a region corresponds to choosing a square in each column of the following table.
\begin{table}[H]
\centering
\begin{tabular}{|c|c|c|c|c|}
\hline
$x_0-x_1<0$ & $x_1-x_2<0$ & $\cdots$ & $x_{n-3}-x_{n-2}<0$ & $x_{n-2}-x_{n-1}<0$ \\ \hline
$0<x_0-x_1<1$ & $0<x_1-x_2<1$ & $\cdots$ & $0<x_{n-3}-x_{n-2}<1$ & $0<x_{n-2}-x_{n-1}<1$ \\ \hline
$x_0-x_1>1$ & $x_1-x_2>1$ & $\cdots$ & $x_{n-3}-x_{n-2}>1$ & $x_{n-2}-x_{n-1}>1$ \\ \hline
\end{tabular}
\label{tab:from-regions-to-rows}
\end{table}

However, notice that not all of this information is necessary to compute the Pak-Stanley label. Indeed, choosing $x_0 - x_1 < 0$ corresponds to crossing the hyperplane $x_0 - x_1 = 0$, which means incrementing the label in coordinate $0$. Similarly, choosing $x_0 - x_1 > 1$ means incrementing the label in coordinate $1$, and choosing $0 < x_0 - x_1 < 1$ means that no hyperplanes are crossed, and therefore none of the coordinates of the label are incremented. Therefore, we replace each inequality with the coordinate of the label that is incremented, which yields the familiar board of the Three Rows Game in Definition \ref{def:the-three-rows-game}. 

Then, to compute the Pak-Stanley label corresponding to a particular outcome ${\bf o}$, we start with $(0,\dots,0)$ and increment the $i$th coordinate for each occurence of $i$ in ${\bf o}$. 

\begin{example} 
Continuing Example \ref{ex:the-three-rows-game}, consider playing the Three Rows Game on 4 columns, and receiving the outcome $\{0,2,4\}$. This corresponds to crossing edges labelled with a $0$, a $2$, and a $4$, and therefore incrementing each of the coordinates $0$, $2$, and $4$ once to get the $G$-parking function $10101$.
\end{example}

This exact correspondence of regions and histories, labels and outcomes, shows that the Three Rows Game is a concise combinatorial model that captures the complexity of the behavior of the Pak-Stanley algorithm on path graphs. With the language of the Three Rows Game, we can more easily spot patterns in the behavior of the Pak-Stanley algorithm. 

An example of one such pattern is that as we play the Three Rows Game, the outcomes are independent of any permutation of the columns. This demonstrates the following fact:


\begin{proposition}
The multiplicities of the Pak-Stanley labels in the Shi adjacency digraph of $P_n$ are independent of the vertex labelling up to permutation. 
\end{proposition}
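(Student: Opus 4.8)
The plan is to prove the statement entirely within the Three Rows Game, using the correspondence just established: the multiplicity of a Pak-Stanley label in $\Gamma\mathscr{S}(P_n)$ equals the multiplicity of the corresponding outcome in the Three Rows Game played on the board associated to $P_n$. First I would make ``vertex labelling'' precise. Fix the path graph as an abstract graph whose $n-1$ edges are arranged in a line; a labelling is a bijection from its vertices to $\{0,\dots,n-1\}$, and any labelling is obtained from the standard left-to-right one by applying some permutation $\sigma\in S_n$. Under the $\sigma$-labelling, the $j$-th edge of the path carries the unordered pair $\{\sigma(j),\sigma(j+1)\}$ in place of $\{j,j+1\}$. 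Since crossing the hyperplane $x_a-x_b=0$ (with $a<b$) increments coordinate $a$ and crossing $x_a-x_b=1$ increments coordinate $b$, the $j$-th column of the Three Rows board for the $\sigma$-labelling has top entry $\min(\sigma(j),\sigma(j+1))$, bottom entry $\max(\sigma(j),\sigma(j+1))$, and an empty middle.

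Next I would set up the natural bijection between the histories of the two games. For each column $j$, the three available choices in the standard game are the entry $j$, the blank, and the entry $j+1$; in the $\sigma$-game they are the entry $\sigma(j)$, the blank, and the entry $\sigma(j+1)$. Because $\sigma$ is injective these two non-blank entries are distinct, so sending $j\mapsto\sigma(j)$, $(j+1)\mapsto\sigma(j+1)$, and blank $\mapsto$ blank is a bijection between the two three-element choice sets; taking the product over all $n-1$ columns yields a bijection $\Phi$ from histories of the standard game to histories of the $\sigma$-game. The essential feature of $\Phi$ is that it transforms outcomes in the simplest possible way: if a history selects the number $k$ in some column, then $\Phi$ of that history selects $\sigma(k)$ in that column, so ${\bf o}(\Phi({\bf h}))$ equals the multiset $\sigma\cdot{\bf o}({\bf h})$ obtained from ${\bf o}({\bf h})$ by applying $\sigma$ to each of its elements.

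The conclusion then follows at once. Since $\Phi$ restricts to a bijection between the histories realizing an outcome ${\bf o}$ and those realizing $\sigma\cdot{\bf o}$, we get that $\mu({\bf o})$ in the standard game equals $\mu(\sigma\cdot{\bf o})$ in the $\sigma$-game for every outcome ${\bf o}$. Translating back to labels --- the Pak-Stanley label of an outcome ${\bf o}$ has $i$-th coordinate equal to the multiplicity of $i$ in ${\bf o}$ --- applying $\sigma$ to the elements of ${\bf o}$ is exactly permuting the coordinates of the label by $\sigma$, so the two multiplicity functions coincide after this coordinate permutation, which is the claim. I expect the only subtle point to be the bookkeeping of the first step: verifying that relabelling the vertices amounts to applying $\sigma$ entrywise to the numbers on the board, with the ``smaller endpoint on top'' convention causing no obstruction. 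Once this is pinned down the argument is a one-line bijection; in particular, the fact that consecutive columns of the standard board share an entry (the ``staircase'' pattern) is not preserved under relabelling, but this is harmless because the game treats the columns independently when forming outcomes.
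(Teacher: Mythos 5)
Your argument is correct and takes essentially the same approach as the paper, which states this proposition without a formal proof, justifying it only by the preceding one-line remark that outcomes of the Three Rows Game are unaffected by permuting the board. Your bijection $\Phi$ on histories --- applying $\sigma$ entrywise to the column labels so that outcomes transform by $\sigma$ and Pak-Stanley labels by the corresponding coordinate permutation --- is a careful formalization of exactly that observation.
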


\subsection{Uniqueness of Maximal Labels in the $P_n$-Shi Arrangement}
In this section we study the multiplicities of maximal labels of regions of the $P_n$-Shi arrangement. 



\begin{theorem}\label{thm:maximal-sink-history-without-middle}
The following are equivalent for a vertex $v$ in the Shi adjacency digraph of $P_n$: 
\begin{enumerate}
    \item $v$ is a sink.
    \item $v$ is labelled with a maximal $G_\bullet$-parking function.
    \item The history corresponding to $v$ has no entries from the middle row of the Three Rows Game.
\end{enumerate}
\end{theorem}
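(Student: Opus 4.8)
The plan is to show that all three conditions are equivalent to the single combinatorial statement ``the history ${\bf h}$ corresponding to $v$ chooses a top or bottom square in every column of the Three Rows Game'' (equivalently, has no middle-row entries). Three ingredients will be used: the cube structure of the Shi adjacency digraph of $P_n$ from Section~\ref{sec:SAD-of-trees}; the bookkeeping identity relating the chip count of a Pak-Stanley label to its outcome, coming from the correspondence in Section~\ref{sec:three-rows-game}; and Proposition~\ref{prop:maximal-superstable-conf} together with Theorem~\ref{thm:superstable-configurations-are-G-parking-functions}.

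First I would establish $(1)\Leftrightarrow(3)$ purely structurally. As recorded in Section~\ref{sec:SAD-of-trees}, the Shi adjacency digraph of $P_n$ is the subdivided cube on vertex set $\{-1,0,1\}^{n-1}$, where the $i$th coordinate of $v$ records whether ${\bf h}$ chooses the top ($x_i-x_{i+1}<0$), middle ($0<x_i-x_{i+1}<1$), or bottom ($x_i-x_{i+1}>1$) square in column $i$, and every edge is directed toward the corners $\{-1,1\}^{n-1}$. Concretely, if ${\bf h}$ chooses the middle square in some column $i$, then $v$ lies in the slab $0<x_i-x_{i+1}<1$, which also contains the base region $R_0$; hence the facet of $v$ on the hyperplane $x_i-x_{i+1}=0$ produces an edge directed \emph{out} of $v$, so $v$ is not a sink. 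Conversely, if ${\bf h}$ has no middle entries then $v$ is a corner, every incident edge points toward it, and $v$ is a sink. This gives $(1)\Leftrightarrow(3)$.

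Next I would relate the chip count of the label to the number of middle entries and then bring in the chip-firing results. By the correspondence of Section~\ref{sec:three-rows-game}, the label $p=(p_0,\dots,p_{n-1})$ of $v$ is obtained from the outcome ${\bf o}={\bf o}({\bf h})$ by letting $p_i$ be the multiplicity of $i$ in ${\bf o}$, so that
\[
\sum_i p_i \;=\; |{\bf o}| \;=\; (n-1) - \#\{\text{columns in which ${\bf h}$ chooses the middle square}\} \;\le\; n-1,
\]
with equality exactly when ${\bf h}$ has no middle entries. Now $p$ is a $(P_n)_\bullet$-parking function (Definition~\ref{def:pak-stanley-algorithm}), hence a superstable configuration on $(P_n)_\bullet$ by Theorem~\ref{thm:superstable-configurations-are-G-parking-functions}. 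Since $P_n$ has $n$ vertices and $m=n-1$ edges, Proposition~\ref{prop:maximal-superstable-conf} says $p$ is a maximal $(P_n)_\bullet$-parking function if and only if it has exactly $n-1$ chips, i.e. $\sum_i p_i=n-1$. By the displayed identity this happens precisely when ${\bf h}$ has no middle entries, i.e. precisely when $v$ is a sink by the previous paragraph. Chaining these equivalences yields $(1)\Leftrightarrow(2)\Leftrightarrow(3)$.

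I do not expect a serious obstacle here, since each of the three reductions is short and they all funnel into the same condition on ${\bf h}$. The one point deserving care is the structural claim $(1)\Leftrightarrow(3)$: one must argue cleanly that a middle choice in column $i$ genuinely yields an \emph{outgoing} edge (because $R_0$ sits in the same slab, so $v$ is on the $R_0$-side of $x_i-x_{i+1}=0$), and that a corner has all incident edges directed inward. Both facts are exactly the content already developed about subdivided-cube Shi adjacency digraphs of trees in Section~\ref{sec:SAD-of-trees}, so invoking that discussion should make this step routine.
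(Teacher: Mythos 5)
Your proposal is correct and follows essentially the same route as the paper: the chip-count identity $\sum_i p_i = (n-1) - \#\{\text{middle squares}\}$ combined with Proposition~\ref{prop:maximal-superstable-conf} handles the equivalence with maximality, and the subdivided-cube structure from Section~\ref{sec:SAD-of-trees} handles the equivalence with being a sink. The only cosmetic difference is that you prove $(1)\Leftrightarrow(3)$ and $(2)\Leftrightarrow(3)$ directly, whereas the paper runs the cycle $(3)\Rightarrow(2)\Rightarrow(1)\Rightarrow(3)$, disposing of $(2)\Rightarrow(1)$ with the one-line observation that a non-sink has children with strictly larger labels.
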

\begin{proof}
\textbf{(3) $\Rightarrow$ (2): }In this case, the Pak-Stanley label ${\bf p}$ of $v$ has coordinate sum $n-1$, corresponding to a superstable configuration with $n-1$ total chips: one for each column in the Three Rows Game. Thus, by Proposition \ref{prop:maximal-superstable-conf} and Theorem \ref{thm:superstable-configurations-are-G-parking-functions}, ${\bf p}$ is maximal.

\textbf{(2) $\Rightarrow$ (1): }If $v$ is labelled with a maximal $G_\bullet$-parking function, then $v$ must be a sink, as otherwise any children of $v$ would be labelled with strictly larger $G_\bullet$-parking functions, a contradiction.

\textbf{(1) $\Rightarrow$ (3): }Suppose that the history corresponding to $v$ has a middle square in column $i$. This implies that $v$ satisfies $0 < x_i - x_{i+1} < 1$, but as discussed in Section \ref{sec:SAD-of-trees}, any region between two hyperplanes has outdegree at least $2$. It is the parent of the two regions given by crossing the hyperplanes $x_i - x_{i+1} = 0$ and $x_i - x_{i+1} = 1$, respectively), so it cannot be a sink.
\end{proof}



This discussion immediately demonstrates the following fact: 

\begin{proposition}
There are $2^{n-1}$ sinks in the Shi adjacency digraph of $P_n$.
\end{proposition}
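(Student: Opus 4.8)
The plan is to read this off directly from Theorem \ref{thm:maximal-sink-history-without-middle}, which gives the equivalence ``$v$ is a sink'' $\iff$ ``the history corresponding to $v$ has no entries from the middle row of the Three Rows Game.'' So the task reduces to counting histories of the Three Rows Game for $P_n$ that avoid the middle row entirely.

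First I would recall that the correspondence set up in Section \ref{sec:three-rows-game} is a bijection between regions of $\mathscr{S}(P_n)$ (equivalently, vertices of $\Gamma\mathscr{S}(P_n)$) and histories of the Three Rows Game, and that the board has one column for each edge $\{i,i+1\}$ of $P_n$, hence exactly $n-1$ columns. A history that uses no middle square is then precisely a choice, independently in each of these $n-1$ columns, of either the top square or the bottom square: two options per column. Multiplying over the columns gives $2^{n-1}$ such histories, and by the bijection these correspond to $2^{n-1}$ distinct vertices of $\Gamma\mathscr{S}(P_n)$. By Theorem \ref{thm:maximal-sink-history-without-middle}, these are exactly the sinks.

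There is essentially no obstacle here; the only points worth stating carefully are (i) that the Three Rows Game for $P_n$ has $n-1$ columns, which follows from $P_n$ having $n-1$ edges, and (ii) that distinct middle-free histories really do give distinct sinks, which is immediate from the injectivity of the history-to-region correspondence. So the ``proof'' is a one-line corollary: by Theorem \ref{thm:maximal-sink-history-without-middle} the sinks biject with middle-free histories, and there are $2^{n-1}$ of the latter since each of the $n-1$ columns is filled by a binary (top-or-bottom) choice.
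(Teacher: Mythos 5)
Your proof is correct and matches the paper's reasoning exactly: the paper gives no separate proof, stating only that the proposition follows immediately from Theorem \ref{thm:maximal-sink-history-without-middle}, since sinks correspond to middle-free histories and each of the $n-1$ columns then admits a binary top-or-bottom choice. Nothing is missing.
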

\begin{theorem}[Uniqueness of Maximal Outcomes]\label{thm:uniqueness-of-maximal-outcomes}
Any outcome ${\bf o}$ of the Three Rows Game on $n$ columns with $|{\bf o}| = n$ has multiplicity $\mu({\bf o}) = 1$.
\end{theorem}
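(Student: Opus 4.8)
The plan is to show that when $|{\bf o}| = n$, the multiset ${\bf o}$ rigidly reconstructs the history, column by column. First note that choosing a middle square contributes nothing to the outcome, so each middle square chosen lowers $|{\bf o}|$ below $n$; hence $|{\bf o}| = n$ forces a history that picks a top or bottom square in every column, contributing exactly one genuine number per column. Encode such a history by a binary string $x_1,\dots,x_n$, where $x_i = 0$ means the top of column $i$ was chosen and $x_i = 1$ means the bottom was chosen. With the convention that column $i$ has top $i-1$ and bottom $i$ (relabelling the columns only permutes names and, as already noted for the Three Rows Game, does not change which outcomes arise), column $i$ then contributes the value $(i-1) + x_i$ to ${\bf o}$.

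The key point is locality: the value $k$ can be produced only by column $k$ (as its bottom) or by column $k+1$ (as its top), since every other column contributes a value outside $\{k-1,k,k+1\}$ of the wrong parity of position. In particular $0$ can come only from the first column. This lets me argue by induction on $n$. The base case $n=1$ is immediate, as ${\bf o} \in \{\{0\},\{1\}\}$ and each choice forces the single column. For the inductive step: if $0 \in {\bf o}$ then the first column's top was chosen; if $0 \notin {\bf o}$ then its bottom (contributing $1$) was chosen; either way the first column's move is forced. Removing the corresponding value (one copy of $0$ or one copy of $1$) from ${\bf o}$ leaves a size-$(n-1)$ multiset that is an outcome of the Three Rows Game on columns $2,\dots,n$, which is a relabelled copy of the game on $n-1$ columns. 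The inductive hypothesis gives a unique history there, and together with the forced first move this yields a unique history for ${\bf o}$, so $\mu({\bf o}) = 1$.

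An equivalent and slightly more computational alternative, which I would mention but not make the main proof: writing $m_k$ for the multiplicity of $k$ in ${\bf o}$, locality gives $m_0 = 1 - x_1$, $m_n = x_n$, and $m_k = x_k + (1 - x_{k+1})$ for $1 \le k \le n-1$; solving left to right yields $x_1 = 1-m_0$ and the recurrence $x_{k+1} = x_k + 1 - m_k$, which determines every $x_i$, with $m_n = x_n$ an automatically satisfied consistency check since ${\bf o}$ is a genuine outcome. There is no real obstacle here; the only care needed is the bookkeeping at the two boundary columns (which lack a neighbor on one side) and the initial observation that $|{\bf o}| = n$ genuinely rules out middle squares, so I would lead with the inductive version to keep the boundary handling implicit.
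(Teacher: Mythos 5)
Your proposal is correct and reaches the conclusion by essentially the same reconstruction idea as the paper: $|{\bf o}|=n$ rules out middle squares, column $i$ can then only contribute $i-1$ or $i$, and the outcome therefore pins down every choice. The paper packages this as the one-line observation that the chosen labels form a weakly increasing sequence (hence are determined by their multiset), whereas you peel off the leftmost column by induction (and sketch an equivalent recurrence for the binary encoding); both are valid, and your version has the minor merit of making explicit the preliminary step that $|{\bf o}|=n$ forbids middle squares, which the paper leaves implicit.
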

\begin{proof}
Suppose ${\bf h}$ and ${\bf h}'$ are two equivalent histories whose shared outcome has cardinality $n$. Then ${\bf h}$ and ${\bf h}'$ are both length $n$ sequences of squares with labels in weakly increasing order, which induce the same outcome if and only if they are equal, ${\bf h} = {\bf h}'$.
\end{proof}
\begin{corollary}
Any maximal $(P_n)_\bullet$-parking function appears uniquely as a Pak-Stanley label for $P_n$. In other words, the labels on all of the sinks of the $P_n$-Shi adjacency digraph appear uniquely.
\end{corollary}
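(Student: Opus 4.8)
The plan is to derive this corollary directly from the results already assembled in this section, with essentially no new work. The statement has two halves: the claim about maximal $(P_n)_\bullet$-parking functions, and its restatement in terms of sinks of the Shi adjacency digraph. Both follow by chaining together the correspondence between regions/histories and labels/outcomes with Theorems \ref{thm:maximal-sink-history-without-middle} and \ref{thm:uniqueness-of-maximal-outcomes}.

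Here is the order in which I would carry it out. First, recall from Section \ref{sec:three-rows-game} that Pak-Stanley labels of regions of $\mathscr{S}(P_n)$ are exactly outcomes of the Three Rows Game on $n-1$ columns, and the multiplicity of a label as a Pak-Stanley label equals the multiplicity $\mu(\mathbf{o})$ of the corresponding outcome. So it suffices to show that the outcome corresponding to a maximal $(P_n)_\bullet$-parking function has multiplicity $1$. Second, invoke Theorem \ref{thm:maximal-sink-history-without-middle}: a vertex $v$ in the Shi adjacency digraph is labelled by a maximal $G_\bullet$-parking function if and only if its history has no middle-row entries, which forces the history to pick either the top or bottom square in each of the $n-1$ columns — hence the outcome has cardinality exactly $n-1$ (one number per column). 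Third, apply Theorem \ref{thm:uniqueness-of-maximal-outcomes} (with $n$ there replaced by $n-1$): any outcome with cardinality equal to the number of columns has multiplicity $1$. Combining these gives that the maximal label appears on a unique region, i.e. a unique vertex of the digraph, which by Theorem \ref{thm:maximal-sink-history-without-middle} is precisely a sink. For the second sentence of the corollary, simply note the equivalence (1) $\Leftrightarrow$ (2) in Theorem \ref{thm:maximal-sink-history-without-middle} identifies ``sink'' with ``labelled by a maximal parking function,'' so the uniqueness statement transfers verbatim.

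There is really no main obstacle here — the corollary is a bookkeeping consequence of the three ingredients just listed. The only point requiring a modicum of care is the off-by-one in the number of columns: the Three Rows Game board for $P_n$ has $n-1$ columns (one per edge of the path), so when citing Theorem \ref{thm:uniqueness-of-maximal-outcomes} one must match ``$|\mathbf{o}| = n$ columns'' to ``$n-1$ columns'' in the present setting. I would state this explicitly to avoid any confusion. One could alternatively give a self-contained one-line argument: by Theorem \ref{thm:maximal-sink-history-without-middle} the history of a region with maximal label is a sequence of top/bottom choices, i.e. a weakly increasing sequence of $n-1$ labels, and two such sequences with the same multiset are equal — but this just reproves Theorem \ref{thm:uniqueness-of-maximal-outcomes} inline, so citing it is cleaner.
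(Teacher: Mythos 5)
Your proposal is correct and follows exactly the route the paper intends: the corollary is stated in the paper without a separate proof precisely because it is the immediate combination of Theorem \ref{thm:maximal-sink-history-without-middle} and Theorem \ref{thm:uniqueness-of-maximal-outcomes} via the histories/regions, outcomes/labels correspondence. Your explicit attention to the $n$ versus $n-1$ column count is a sensible clarification but does not change the argument.
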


Now, while uniqueness is straightforward to prove for maximal outcomes, the following example shows that it fails quite drastically for general outcomes. 

\begin{example}\label{ex:exponentially-many-repetitions}
Consider the Three Rows Game on $2n$ columns. Then the outcome ${\bf o} = \{1,3,5,\dots,2n-1\}$ satisfies $\mu({\bf o}) = 2^n$. To see why, notice that there are $n$ integers in this outcome, and for each of them we can choose either $i\square$ (bottom and middle) or $\square i$ (middle and top).
\end{example}
This example is interesting because it shows that some outcomes of the Three Rows Game can arise from exponentially many histories. Later, once we develop further machinery, we will find the outcome (and therefore the Pak-Stanley label) which occurs the maximum number of times.

\subsection{Patterns in the Three Rows Game}
To characterize why uniqueness fails for general outcomes, we introduce the notion of patterns, which are particular sequences of moves in the Three Rows Game.

\begin{definition}[Ascents/Descents]\label{def:ascents/descents}
Given a history ${\bf h} = (h_1,\dots,h_n)$ of the Three Rows Game, $(h_i,h_{i+1})$ is called an \textit{ascent} if $h_{i+1}$ is in a higher row than $h_i$, and a \textit{descent} if $h_{i+1}$ is in a lower row than $h_i$. Any subsequence $(h_i,h_{i+1},\dots,h_{j-1},h_j)$ of ${\bf h}$ is called \textit{weakly ascending (resp. descending)} if it has no descent (resp. ascent).
\end{definition}

Note that ascents and descents in a history are defined solely by the \textit{placement} of the blocks, rather than the values that they contain.

\begin{definition}[Prepatterns and Patterns]\label{def:(pre)patterns}
Given a history of the Three Rows Game, we split the history into blocks, called \textit{prepatterns}, by introducing breaks before any descent or before a middle square appears twice in a block. Those prepatterns which contain a middle square are called \textit{patterns}. The length of a pattern ${\bf p}$ is denoted $\ell({\bf p})$.
\end{definition}

\begin{example}
The following history is broken up into its 6 prepatterns: 
\begin{figure}[H]
    \centering
    \begin{tabular}{|l|l|l|l|l|l|l|l|l|l|}
    \hline
    \cellcolor[HTML]{C0C0C0}0 & 1 & 2 & 3 & 4 & \cellcolor[HTML]{C0C0C0}5 & 6 & 7 & 8 & 9 \\ \hline
     & \cellcolor[HTML]{C0C0C0} &  &  & \cellcolor[HTML]{C0C0C0} &  &  & \cellcolor[HTML]{C0C0C0} & \cellcolor[HTML]{C0C0C0} &  \\ \hline
    1 & 2 & \cellcolor[HTML]{C0C0C0}3 & \cellcolor[HTML]{C0C0C0}4 & 5 & 6 & \cellcolor[HTML]{C0C0C0}7 & 8 & 9 & \cellcolor[HTML]{C0C0C0}10 \\ \hline
    \end{tabular} 
    $\quad \Rightarrow \quad$ \begin{tabular}{|l|}
    \hline
    \rowcolor[HTML]{C0C0C0} 
    0 \\ \hline
     \\ \hline
    1 \\ \hline
    \end{tabular} \quad 
    \begin{tabular}{|l|}
    \hline
    1 \\ \hline
    \rowcolor[HTML]{C0C0C0} 
     \\ \hline
    2 \\ \hline
    \end{tabular} \quad 
    \begin{tabular}{|l|l|l|l|}
    \hline
    2 & 3 & 4 & \cellcolor[HTML]{C0C0C0}5 \\ \hline
     &  & \cellcolor[HTML]{C0C0C0} &  \\ \hline
    \cellcolor[HTML]{C0C0C0}3 & \cellcolor[HTML]{C0C0C0}4 & 5 & 6 \\ \hline
    \end{tabular} \quad 
    \begin{tabular}{|l|l|}
    \hline
    6 & 7 \\ \hline
     & \cellcolor[HTML]{C0C0C0} \\ \hline
    \cellcolor[HTML]{C0C0C0}7 & 8 \\ \hline
    \end{tabular} \quad 
    \begin{tabular}{|l|}
    \hline
    $8$ \\ \hline
    \rowcolor[HTML]{C0C0C0} 
     \\ \hline
    $9$ \\ \hline
    \end{tabular} \quad \begin{tabular}{|l|}
    \hline
    $9$ \\ \hline
    \rowcolor[HTML]{FFFFFF} 
     \\ \hline
    \rowcolor[HTML]{C0C0C0} 
    $10$ \\ \hline
    \end{tabular}
    \label{fig:splitting-a-history-into-prepatterns}
\end{figure}
From those 6 prepatterns, we get the following patterns whose lengths are 1, 4, 2, and 1 respectively: 
\begin{figure}[H]
    \centering
    \begin{tabular}{|l|}
    \hline
    1 \\ \hline
    \rowcolor[HTML]{C0C0C0} 
     \\ \hline
    2 \\ \hline
    \end{tabular} \quad 
    \begin{tabular}{|l|l|l|l|}
    \hline
    2 & 3 & 4 & \cellcolor[HTML]{C0C0C0}5 \\ \hline
     &  & \cellcolor[HTML]{C0C0C0} &  \\ \hline
    \cellcolor[HTML]{C0C0C0}3 & \cellcolor[HTML]{C0C0C0}4 & 5 & 6 \\ \hline
    \end{tabular} \quad 
    \begin{tabular}{|l|l|}
    \hline
    6 & 7 \\ \hline
     & \cellcolor[HTML]{C0C0C0} \\ \hline
    \cellcolor[HTML]{C0C0C0}7 & 8 \\ \hline
    \end{tabular} \quad \begin{tabular}{|l|}
    \hline
    $8$ \\ \hline
    \rowcolor[HTML]{C0C0C0} 
     \\ \hline
    $9$ \\ \hline
    \end{tabular}
    \label{fig:remaining-patterns}
\end{figure}
\end{example}

The importance of patterns comes when proving Theorem  \ref{thm:pattern-repetition-theorem}. The proof of this theorem requires uniqueness of maximal outcomes as well as the following series of technical lemmas in Section \ref{subsec:technical-lemmas}. Furthermore, these technical lemmas will also prove useful in translating our results back into the language of Pak-Stanley labels.

\subsection{Technical Lemmas About Patterns}\label{subsec:technical-lemmas}

\begin{lemma}[Gap Characterization Lemma]\label{lem:gap-characterization-lemma}
Suppose that ${\bf h} = (h_1,\dots,h_n)$ is a history of the Three Rows Game with outcome ${\bf o}$ and $i\in\{0,1,\dots,n\}$. Then the following are equivalent: 
\begin{enumerate}
    \item $i$ does not appear in the outcome ${\bf o}$.
    \item Either $(h_i,h_{i+1})$ is a descent or both $h_i$ and $h_{i+1}$ are middle squares.
\end{enumerate}
\end{lemma}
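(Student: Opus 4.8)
The plan is to prove the Gap Characterization Lemma by carefully unpacking what the outcome ${\bf o}$ records and how it relates to the row-structure of the history. Recall that the board has columns indexed $i = 0, \dots, n-2$, where column $i$ carries the values $i$ (top), nothing (middle), and $i+1$ (bottom). So the value $i$ can be produced in exactly two ways: by choosing the top square of column $i$ (if $0 \le i \le n-2$), or by choosing the bottom square of column $i-1$ (if $1 \le i \le n-1$). Consequently, for a fixed $i \in \{0, 1, \dots, n\}$ — and here it is cleanest to introduce the boundary conventions $h_0 :=$ ``bottom'' and $h_{n+1} :=$ ``top'', as the author implicitly does by ranging over $i \in \{0,\dots,n\}$ and writing $(h_i, h_{i+1})$ — the value $i$ appears in ${\bf o}$ if and only if $h_i$ is the bottom square of column $i-1$ \emph{or} $h_{i+1}$ is the top square of column $i$.

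Negating this, I would argue that $i$ fails to appear in ${\bf o}$ exactly when $h_i$ is \emph{not} the bottom square and $h_{i+1}$ is \emph{not} the top square. The key step is then to translate ``$h_i$ not bottom and $h_{i+1}$ not top'' into the row-placement language of condition (2). Enumerate the possibilities for the ordered pair of rows $(\mathrm{row}(h_i), \mathrm{row}(h_{i+1}))$: since $h_i \ne$ bottom means $\mathrm{row}(h_i) \in \{\text{top}, \text{middle}\}$, and $h_{i+1} \ne$ top means $\mathrm{row}(h_{i+1}) \in \{\text{middle}, \text{bottom}\}$, the allowed pairs are (top, middle), (top, bottom), (middle, middle), (middle, bottom). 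Of these, (top, middle), (top, bottom), and (middle, bottom) are precisely the descents (a descent is when $h_{i+1}$ lies in a strictly lower row than $h_i$), and (middle, middle) is exactly the case ``both $h_i$ and $h_{i+1}$ are middle squares.'' Conversely, any pair that is a descent or a double-middle satisfies the two negative conditions, so the equivalence of (1) and (2) follows. Here I need to check that the boundary cases behave: for $i = 0$ the condition ``$h_0$ not bottom'' is false under the convention $h_0 = $ bottom, so $0 \in {\bf o}$ iff $h_1$ is the top of column $0$, matching the fact that $0$ can only come from column $0$'s top; and symmetrically for $i = n$ with $h_{n+1} = $ top.

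The main obstacle — really the only subtle point — is getting the boundary bookkeeping exactly right, i.e.\ making sure the endpoints $i = 0$ and $i = n$ are handled by a consistent convention so that the single statement ``$(h_i, h_{i+1})$ is a descent or both are middle'' covers them uniformly. I would state this convention explicitly at the start of the proof (the ``virtual'' squares $h_0$ = bottom and $h_{n+1}$ = top) and note that it is forced: a value $0$ or $n$ has only one possible source column, which is reflected by pretending there is a column $-1$ whose only available choice is its bottom square (value $0$) and a column $n-1$ whose only available choice is its top square (value $n$). Once this is in place, the body of the argument is the finite case check above, which is routine. I would also remark that this lemma is the combinatorial heart of later results: the ``gaps'' in an outcome are exactly the descents plus double-middles, which is what makes prepatterns (broken before descents and before a repeated middle) the right unit for analyzing multiplicities.
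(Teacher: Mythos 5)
Your argument is the same as the paper's: observe that the value $i$ can only arise as the bottom of $h_i$'s column or the top of $h_{i+1}$'s column, negate, and check that ``$h_i$ not bottom and $h_{i+1}$ not top'' is exactly ``descent or both middle.'' Your case enumeration of the four pairs $(\mathrm{top},\mathrm{middle})$, $(\mathrm{top},\mathrm{bottom})$, $(\mathrm{middle},\mathrm{middle})$, $(\mathrm{middle},\mathrm{bottom})$ is correct for the interior indices $1 \le i \le n-1$, and is in fact more explicit than the paper's one-sentence justification.

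However, the boundary convention you introduce is inverted, and as stated it breaks the lemma at $i=0$ and $i=n$. You want the disjunct ``$h_0$ is the bottom square'' to be \emph{vacuously false}, so that $0 \in {\bf o}$ reduces to ``$h_1$ is the top square''; declaring $h_0 := \text{bottom}$ makes that disjunct \emph{true}, which forces $0 \in {\bf o}$ for every history. Concretely, take the all-middle history: its outcome is empty, so $0 \notin {\bf o}$, yet under your convention $(h_0,h_1) = (\text{bottom},\text{middle})$ is an ascent and not a double-middle, so condition (2) fails --- contradicting the equivalence you are proving. Your own sentence ``the condition `$h_0$ not bottom' is false \dots so $0 \in {\bf o}$ iff $h_1$ is the top of column $0$'' is a non sequitur for the same reason. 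The fix is to flip the convention: set $h_0$ to a middle (or top) square and $h_{n+1}$ to a middle (or bottom) square --- e.g.\ with $h_0 = h_{n+1} = \text{middle}$, condition (2) at $i=0$ becomes ``$h_1 \ne \text{top}$'' and at $i=n$ becomes ``$h_n \ne \text{bottom}$,'' exactly as required. With that one sign flip your proof is complete and agrees with the paper's.
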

\begin{proof}The only two places that $i$ appears is at the bottom column $i$ and the top of column $(i+1)$. Therefore, $i$ does not appear in ${\bf o}$ if and only if $h_i$ is not the bottom square and $h_{i+1}$ is not the top square; in this case, either $(h_i,h_{i+1})$ is a descent or both $h_i$ and $h_{i+1}$ are middle squares. 
\end{proof}

\begin{lemma}[Splitting Lemma]\label{lem:splitting-lemma}
Suppose that ${\bf h} = (h_1,\dots,h_n)$ and ${\bf h}' = (h'_1,\dots,h'_n)$ are histories of the Three Rows Game where ${\bf h}$ has outcome ${\bf o}$ with $i \not\in {\bf o}$. Then ${\bf h} \sim {\bf h'}$ if and only if ${\bf h} \sim (h'_1,\dots,h'_i,h_{i+1},\dots,h_n)\sim (h_1,\dots,h_i,h_{i+1}',\dots,h_n')$. That is, any replacement of ${\bf h}$ by an equivalent history can be broken up into two independent replacements of $(h_1,\dots,h_i)$ and $(h_{i+1},\dots,h_n)$ by individually equivalent histories, so that
\begin{equation*}
    \mu((h_1,\dots,h_n)) = \mu((h_1,\dots,h_i)) \cdot \mu((h_{i+1},\dots,h_n)).
\end{equation*}
\end{lemma}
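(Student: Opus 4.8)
The plan is to prove the Splitting Lemma by showing both directions of the biconditional, with the key observation being that the hypothesis $i \notin {\bf o}$ rigidly constrains the behavior of any history equivalent to ${\bf h}$ at the ``seam'' between columns $i$ and $i+1$. By Lemma~\ref{lem:gap-characterization-lemma}, $i \notin {\bf o}({\bf h})$ means that $h_i$ is not a bottom square and $h_{i+1}$ is not a top square; the same must hold for any ${\bf h}''$ with ${\bf o}({\bf h}'') = {\bf o}$. The multiset ${\bf o}$ decomposes as the disjoint union of the contributions from columns $1,\dots,i$ and from columns $i+1,\dots,n$: concretely, set ${\bf o}_L$ to be the restriction of ${\bf o}$ to values in $\{0,1,\dots,i-1\}$ and ${\bf o}_R$ to values in $\{i+1,\dots,n\}$ (the value $i$ being absent). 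I would first verify that the prefix $(h_1,\dots,h_i)$, played as a Three Rows Game on columns $1,\dots,i$, has outcome exactly ${\bf o}_L$, and the suffix $(h_{i+1},\dots,h_n)$ has outcome exactly ${\bf o}_R$ — this is because a bottom or middle square in column $i$ contributes nothing above $i-1$, and a middle or top square in column $i+1$ contributes nothing below $i+1$.

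Next I would establish the ``hard'' direction: if ${\bf h} \sim {\bf h}'$, then the spliced history $(h_1',\dots,h_i',h_{i+1},\dots,h_n)$ is equivalent to ${\bf h}$, and likewise for the other splice. The point is that ${\bf o}({\bf h}') = {\bf o}$ forces, via Lemma~\ref{lem:gap-characterization-lemma} again, that $h_i'$ is not a bottom square and $h_{i+1}'$ is not a top square, so the prefix $(h_1',\dots,h_i')$ again contributes only values $\le i-1$ and the suffix $(h_{i+1}',\dots,h_n')$ only values $\ge i+1$. Moreover these prefix/suffix outcomes must \emph{individually} equal ${\bf o}_L$ and ${\bf o}_R$ respectively: since the total outcome is the disjoint union and the left part lives entirely in $\{0,\dots,i-1\}$ while the right part lives entirely in $\{i+1,\dots,n\}$, there is no way for a deficit on one side to be compensated by the other. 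Hence ${\bf o}\bigl((h_1',\dots,h_i')\bigr) = {\bf o}_L = {\bf o}\bigl((h_1,\dots,h_i)\bigr)$ and similarly on the right, from which ${\bf o}\bigl((h_1',\dots,h_i',h_{i+1},\dots,h_n)\bigr) = {\bf o}_L \sqcup {\bf o}_R = {\bf o}$ as required. The converse direction is immediate: if both splices are equivalent to ${\bf h}$, then in particular ${\bf h}' = (h_1',\dots,h_i', h_{i+1}',\dots,h_n')$ has outcome ${\bf o}_L \sqcup {\bf o}_R = {\bf o}$, so ${\bf h} \sim {\bf h}'$; combining, any ${\bf h}' \sim {\bf h}$ is obtained by an arbitrary equivalence-preserving replacement of the prefix together with an arbitrary one of the suffix.

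Finally, the multiplicity formula $\mu\bigl((h_1,\dots,h_n)\bigr) = \mu\bigl((h_1,\dots,h_i)\bigr)\cdot\mu\bigl((h_{i+1},\dots,h_n)\bigr)$ follows by a counting argument: the map sending a history ${\bf h}'$ equivalent to ${\bf h}$ to the pair $\bigl((h_1',\dots,h_i'),\,(h_{i+1}',\dots,h_n')\bigr)$ is a bijection from $[{\bf h}]$ onto $[(h_1,\dots,h_i)] \times [(h_{i+1},\dots,h_n)]$. Surjectivity is exactly the ``hard direction'' above (any equivalent prefix may be concatenated with any equivalent suffix), and injectivity is trivial since concatenation is inverse to the splitting map. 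Taking cardinalities and invoking Definition~\ref{def:repetition-of-an-outcome} (which identifies $\mu({\bf h})$ with $|[{\bf h}]|$) gives the product formula.

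I expect the main obstacle to be the bookkeeping in the hard direction — specifically, making fully rigorous the claim that the left and right outcomes are \emph{forced} to be ${\bf o}_L$ and ${\bf o}_R$ individually, rather than merely jointly summing to ${\bf o}$. This is where the disjointness of the value ranges $\{0,\dots,i-1\}$ and $\{i+1,\dots,n\}$, guaranteed by $i \notin {\bf o}$ and Lemma~\ref{lem:gap-characterization-lemma}, does the real work, and one must be careful that the constraint ``$h_i'$ not bottom, $h_{i+1}'$ not top'' genuinely holds for \emph{every} equivalent history and not just for ${\bf h}$ itself.
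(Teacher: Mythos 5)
Your proposal is correct and follows essentially the same route as the paper's proof: split ${\bf o}$ at the missing value $i$ into the parts below and above $i$, observe that prefix and suffix replacements affect these parts independently, and deduce the product formula from the resulting bijection. You simply spell out (via the Gap Characterization Lemma) the forcing of the individual prefix/suffix outcomes that the paper compresses into ``The result follows.''
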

\begin{proof}
Suppose that $i \not\in {\bf o}$. Then, ${\bf o}$ can be split into two parts ${\bf o}_1$ and ${\bf o}_2$: those numbers which are strictly less than $i$, and those numbers which are strictly greater than $i$. Hence, a replacement of $h_1,\dots,h_i$ by $h_1',\dots,h_i'$ can only affect ${\bf o}_1$, and a replacement of $h_{i+1},\dots,h_n$ by $h_{i+1}',\dots,h_n'$ can only affect ${\bf o}_2$. The result follows.
\end{proof}

\begin{lemma}[Pattern Characterization Lemma]\label{lem:pattern-characterization-lemma}
Suppose that ${\bf h} = (h_1,\dots,h_n)$ is a history of the Three Rows Game with outcome ${\bf o}$. Then, if $1 \leq i < j \leq n$, $(h_i,h_{i+1},\dots,h_j)$ is a pattern if and only if $i,i+1,\dots,j-1$ appear exactly once in ${\bf o}$ and $i-1,j \not\in {\bf o}$.
\end{lemma}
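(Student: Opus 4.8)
The claim is a biconditional about when a contiguous block $(h_i,\dots,h_j)$ of a history is a pattern, phrased entirely in terms of the outcome ${\bf o}$. The natural approach is to unpack the definition of a pattern (Definition \ref{def:(pre)patterns}) and translate each of its requirements through the Gap Characterization Lemma (Lemma \ref{lem:gap-characterization-lemma}). Recall that a pattern is a prepattern containing a middle square, and prepatterns are the blocks obtained by cutting the history before every descent and before the second occurrence of a middle square within a block. So $(h_i,\dots,h_j)$ being a pattern means exactly four things: (a) there is a cut before position $i$, i.e.\ $i=1$ or $(h_{i-1},h_i)$ is a descent or $h_i$ would be the second middle square of the block starting the prepattern containing $h_{i-1}$ --- more cleanly, position $i$ begins a prepattern; (b) there is a cut before position $j+1$, i.e.\ position $j+1$ begins a new prepattern (or $j=n$); (c) no cut occurs strictly between $i$ and $j$; and (d) the block $(h_i,\dots,h_j)$ contains at least one middle square.

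First I would handle the "no internal cut" condition (c). A cut strictly inside would be caused either by a descent $(h_k,h_{k+1})$ with $i\le k<j$, or by a middle square appearing twice among $h_i,\dots,h_j$. I claim the absence of both is equivalent to: $i,i+1,\dots,j-1$ all appear in ${\bf o}$, \emph{and} each appears exactly once. By Lemma \ref{lem:gap-characterization-lemma}, for $i \le k \le j-1$ the number $k$ fails to appear in ${\bf o}$ iff $(h_k,h_{k+1})$ is a descent or both $h_k,h_{k+1}$ are middle squares; so "$k$ appears in ${\bf o}$ for all such $k$" rules out all internal descents and rules out \emph{consecutive} middle squares, but one must still argue that non-consecutive repeated middle squares can't sneak in. Here is where I would use the structure: within a prepattern there are no descents (all steps are weakly ascending), and I would show that in a weakly ascending run with at most — in fact I should verify whether a prepattern can contain two middle squares without them forcing a descent between them; the definition cuts before a middle square appears "twice in a block", so a prepattern has \emph{at most one} middle square, which immediately dispenses with the non-consecutive case and also makes (d) say "exactly one middle square." Then, granting exactly one middle square at some position $m$ with $i\le m\le j$, I would check multiplicities: position $m$ is the unique non-bottom-non-top... wait, a middle square contributes nothing to ${\bf o}$, and in a weakly ascending run the top square of column $k+1$ carries label $k$, the bottom square of column $k$ carries label $k$; so $k$ (for $i\le k\le j-1$) appears once unless a middle square interrupts exactly at the junction. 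Tracking this bookkeeping carefully gives that $i,\dots,j-1$ each appear exactly once precisely when (c) and (d) hold.

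Next I would handle the boundary conditions (a) and (b) via the requirement $i-1,j\notin{\bf o}$. By Lemma \ref{lem:gap-characterization-lemma}, $i-1\notin{\bf o}$ iff $(h_{i-1},h_i)$ is a descent or $h_{i-1},h_i$ are both middle squares; either way a cut is forced before position $i$ (the descent case directly, the double-middle case because the prepattern containing $h_{i-1}$ already has its one middle square, so $h_i$ starts a new block). Conversely if position $i$ starts a prepattern either $i=1$ (and one checks $i-1=0\notin{\bf o}$ automatically, since $0$ only appears as the bottom of column $0$ or top of column $1$, but column $0$'s bottom \emph{can} hold $0$ — so I need to be a little careful: actually the convention in the lemma statement ranges $i\in\{0,\dots,n\}$ and "$0\notin{\bf o}$" must be interpreted as vacuously holding when $i=1$; I would state this edge case explicitly) or a descent or double-middle caused the cut, and Lemma \ref{lem:gap-characterization-lemma} turns that back into $i-1\notin{\bf o}$. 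The symmetric argument at the right end shows $j\notin{\bf o}$ iff position $j+1$ starts a new prepattern (or $j=n$, with $j=n$ meaning $n\notin{\bf o}$ vacuously). Assembling (a)+(b)+(c)+(d), both directions of the biconditional follow.

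\textbf{Main obstacle.} The genuine subtlety — and the step I expect to consume the most care — is the "exactly once" bookkeeping in condition (c)/(d): translating "no repeated middle square and no internal descent" into "each of $i,\dots,j-1$ appears exactly once" requires knowing that a prepattern contains at most one middle square (which follows from the definition of the cut) and then carefully tracking how the single middle square, sitting at some column $m$, affects the counts of the labels $m-1$ and $m$ in the outcome — these are exactly the places where the naive "bottom of column $k$ and top of column $k+1$ both give $k$" double-count gets corrected down to a single occurrence. Getting the endpoint conventions ($i=1$, $j=n$, and the meaning of "$0\notin{\bf o}$") stated cleanly is a minor but necessary piece of hygiene.
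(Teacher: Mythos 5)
Your approach is essentially the paper's: both directions are obtained by translating the structural definition of a pattern through the Gap Characterization Lemma, with the single middle square located by the same counting argument (the block of $j-i+1$ columns contributes only $j-i$ numbers to the outcome, so exactly one column is a middle square). The paper's write-up is more economical --- it simply exhibits the canonical form of a pattern ($k$ bottom squares, one middle square, the rest top squares) and reads off the outcome conditions --- but the content is the same, so I would not call this a different route.

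One concrete correction, though: your resolution of the $i=1$ edge case is wrong. The condition $0 \not\in {\bf o}$ is not vacuous. The number $0$ appears in the outcome precisely when the top square of the first column is chosen, so $0 \not\in {\bf o}$ says exactly that $h_1$ is not a top square --- and this is genuinely needed for the backward direction. For example, in the history $(T,T,M)$ the block $(h_1,h_2)$ satisfies ``$1$ appears exactly once in ${\bf o}$ and $2 \not\in {\bf o}$'' yet is not a pattern (it has no middle square); it is excluded only because $0 \in {\bf o}$. So you should apply the Gap Characterization Lemma uniformly at $i-1=0$ (where it degenerates to ``$h_1$ is not the top square'') rather than declaring the condition to hold automatically. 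With that fixed, the rest of your argument goes through.
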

\begin{proof}
Suppose that $(h_i,h_{i+1},\dots,h_j)$ is a pattern. Then $(h_i,h_{i+1},\dots,h_j)$ takes the form:
\begin{table}[H]
    \centering
    \begin{tabular}{lllllll}
    \hline
    \multicolumn{1}{|l|}{$i-1$} & \multicolumn{1}{l|}{} & \multicolumn{1}{l|}{} & \multicolumn{1}{l|}{} & \multicolumn{1}{l|}{\cellcolor[HTML]{C0C0C0}$\;\,$} & \multicolumn{1}{l|}{\cellcolor[HTML]{C0C0C0}$\cdots$} & \multicolumn{1}{l|}{\cellcolor[HTML]{C0C0C0}$j-1$} \\ \hline
    \multicolumn{1}{|l|}{} & \multicolumn{1}{l|}{} & \multicolumn{1}{l|}{} & \multicolumn{1}{l|}{\cellcolor[HTML]{C0C0C0}$\;\,$} & \multicolumn{1}{l|}{} & \multicolumn{1}{l|}{} & \multicolumn{1}{l|}{} \\ \hline
    \multicolumn{1}{|l|}{\cellcolor[HTML]{C0C0C0}$i$} & \multicolumn{1}{l|}{\cellcolor[HTML]{C0C0C0}$\cdots$} & \multicolumn{1}{l|}{\cellcolor[HTML]{C0C0C0}$\;\,$} & \multicolumn{1}{l|}{} & \multicolumn{1}{l|}{} & \multicolumn{1}{l|}{} & \multicolumn{1}{l|}{$j$} \\ \hline
    $h_i$ & \multicolumn{2}{l}{} & $h_{i+k}$ & \multicolumn{2}{l}{} & $h_j$
    \end{tabular}
    \label{tab:structure-of-a-pattern}
\end{table}
That is, $(h_i,h_{i+1},\dots,h_j)$ begins with $k$ bottom squares, a single middle square, and then $j-i-k$ top squares for some $k \in \{0,\dots,j-i\}$. This establishes that $i,i+1,\dots,j-1$ appear exactly once in ${\bf o}$. Then, since $h_{i-1}$ is not the bottom square (as otherwise it would be in the pattern), $i-1 \not\in {\bf o}$. Similarly, since $h_{j+1}$ is not the top square, $j \not\in {\bf o}$.

Conversely, suppose $\{i,i+1,\dots,j-1\}\subseteq {\bf o}$ and $i-1,j \not\in {\bf o}$. Then, since $j-i-1$ numbers appear from the $j-i$ columns $h_i,h_{i+1},\dots,h_j$, there is one middle square among them. Furthermore, there is no descent in $(h_i,h_{i+1},\dots,h_j)$ by Lemma \ref{lem:gap-characterization-lemma}. Now, since $i-1 \not\in {\bf o}$, Lemma \ref{lem:gap-characterization-lemma} shows that $(h_{i-1},h_i)$ is either a descent or two middle squares. The same holds for $(h_j,h_{j+1})$ as $j \not\in {\bf o}$. Thus, $(h_i,\dots,h_j)$ is a pattern: it is weakly ascending with exactly one middle square, and $h_{i-1}$ and $h_{j+1}$ cannot be included.
\end{proof}

\begin{lemma}[Multiplicity of Patterns]\label{lem:repetitions-of-patterns}
Suppose that ${\bf h} = (h_1,\dots,h_n)$ is a history of the Three Rows Game which is also a pattern (that is, it is weakly ascending with exactly one middle square). Then $\mu({\bf h}) = n$, and all the histories in $[{\bf h}]$ are also patterns.
\end{lemma}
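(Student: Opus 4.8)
The plan is to pin down the exact shape of a history that is a pattern and then count how many ways the underlying set of column-values can be redistributed while keeping the outcome fixed. By Definition \ref{def:(pre)patterns}, a history ${\bf h} = (h_1,\dots,h_n)$ that is itself a pattern is weakly ascending with exactly one middle square; combined with the observation in Section \ref{sec:SAD-of-trees} and the table in the proof of Lemma \ref{lem:pattern-characterization-lemma}, this forces ${\bf h}$ to consist of $k$ bottom squares, then a single middle square, then $n-1-k$ top squares, for some $k \in \{0,1,\dots,n-1\}$. (Here I write the pattern as occupying columns $1,\dots,n$, so the board has columns labelled $0,\dots,n-1$ on top and $1,\dots,n$ on the bottom.) First I would record what outcome such an ${\bf h}$ produces: the $k$ bottom squares in columns $1,\dots,k$ contribute the bottom-values $1,\dots,k$; wait — more carefully, a bottom square in column $c$ contributes $c$ and a top square in column $c$ contributes $c-1$, so a run of $k$ bottoms in columns $1,\dots,k$ followed by a middle in column $k+1$ followed by tops in columns $k+2,\dots,n$ contributes exactly $\{1,2,\dots,k\} \cup \{k+1,\dots,n-1\} = \{1,2,\dots,n-1\}$. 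Thus \emph{every} history that is a pattern on $n$ columns has the same outcome ${\bf o} = \{1,2,\dots,n-1\}$, independent of $k$.

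Next I would argue the converse inclusion: any history ${\bf h}'$ with ${\bf o}({\bf h}') = \{1,\dots,n-1\}$ is a pattern. Since $|{\bf o}({\bf h}')| = n-1$ and there are $n$ columns, exactly one column of ${\bf h}'$ is a middle square and every other column contributes a distinct value. By Lemma \ref{lem:gap-characterization-lemma}, since $0 \notin {\bf o}$ and $n \notin {\bf o}$ but $1,\dots,n-1$ all appear, there are no descents anywhere in ${\bf h}'$ and no two consecutive middle squares; hence ${\bf h}'$ is weakly ascending with exactly one middle square, i.e.\ a pattern (this is precisely Lemma \ref{lem:pattern-characterization-lemma} applied with $i=1$, $j=n$). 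So $[{\bf h}]$ is exactly the set of pattern-histories on $n$ columns, and all its members are patterns, giving the second assertion of the lemma.

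It remains to count the pattern-histories. From the structural description, such a history is determined entirely by the position $k+1 \in \{1,\dots,n\}$ of its unique middle square: once that position is fixed, all earlier columns must be bottom squares and all later columns must be top squares (a top square before the middle, or a bottom square after it, would create a descent). That gives exactly $n$ choices, so $\mu({\bf h}) = n$, as claimed. The only mildly delicate point — the step I expect to be the main obstacle — is being careful with the column-indexing bookkeeping (top of column $c$ gives $c-1$, bottom gives $c$) so that the outcome of a length-$n$ pattern comes out to be the full interval $\{1,\dots,n-1\}$ regardless of where the middle square sits; everything else is a direct unwinding of the definitions together with Lemma \ref{lem:gap-characterization-lemma} and Lemma \ref{lem:pattern-characterization-lemma}. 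Note that Theorem \ref{thm:uniqueness-of-maximal-outcomes} is not actually needed here, since the weakly-ascending structure already does all the work.
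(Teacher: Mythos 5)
Your proposal is correct and follows essentially the same route as the paper: the paper likewise invokes the Pattern Characterization Lemma to conclude that $\mathbf{h}' \sim \mathbf{h}$ if and only if $\mathbf{h}'$ is also a pattern, and then counts patterns by the $n$ possible positions of the unique middle square. Your extra bookkeeping (explicitly computing that every length-$n$ pattern has outcome $\{1,\dots,n-1\}$ and verifying the converse via the Gap Characterization Lemma) just makes explicit what the paper delegates to Lemma \ref{lem:pattern-characterization-lemma}, and your observation that Theorem \ref{thm:uniqueness-of-maximal-outcomes} is not needed here is consistent with the paper's own proof.
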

\begin{proof}
First, by Lemma \ref{lem:pattern-characterization-lemma}, ${\bf h}' \sim {\bf h}$ if and only if ${\bf h}'$ is also a pattern. Now, as previously discussed, patterns take the following form: 
\begin{table}[H]
    \centering
    \begin{tabular}{|l|l|l|l|l|l|l|}
    \hline
     &  &  &  & \cellcolor[HTML]{C0C0C0}$\;\,$ & \cellcolor[HTML]{C0C0C0}$\cdots$ & \cellcolor[HTML]{C0C0C0}$\;\,$ \\ \hline
     &  &  & \cellcolor[HTML]{C0C0C0}$\;\,$ &  &  &  \\ \hline
    \cellcolor[HTML]{C0C0C0}$\;\,$ & \cellcolor[HTML]{C0C0C0}$\cdots$ & \cellcolor[HTML]{C0C0C0}$\;\,$ &  &  &  &  \\ \hline
    \end{tabular}
    \label{tab:structure-of-a-pattern-2}
\end{table}
That is, the first $k \in \{0,1,\dots,n-1\}$ squares are chosen in the bottom row, the next square is chosen in the middle row, and the remaining squares are chosen in the top row. Since there are $n$ options for $k$, this demonstrates that there are $n$ possible patterns of length $n$, as desired.
\end{proof}

\subsection{The Pattern Multiplicity Theorem and Consequences}
\begin{theorem}[Pattern Multiplicity Theorem]\label{thm:pattern-repetition-theorem}
Suppose that ${\bf h}$ is a history of the Three Rows Game with patterns ${\bf p}_1,\dots,{\bf p}_k$. Then $\mu({\bf h}) = \ell({\bf p}_1) \cdots \ell({\bf p}_k)$. 
\end{theorem}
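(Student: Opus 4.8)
The plan is to combine the Splitting Lemma (Lemma~\ref{lem:splitting-lemma}) with the structure of prepatterns to reduce the computation of $\mu({\bf h})$ to a product over the patterns of ${\bf h}$, then invoke the Multiplicity of Patterns lemma (Lemma~\ref{lem:repetitions-of-patterns}) on each factor. First I would recall that the history ${\bf h} = (h_1,\dots,h_n)$ is cut into prepatterns by inserting breaks immediately before every descent and immediately before any second middle square in a block. My first claim is that each break site corresponds to an index $i$ with $i \notin {\bf o}({\bf h})$: if the break is before a descent $(h_i,h_{i+1})$, then $i \notin {\bf o}$ by the Gap Characterization Lemma (Lemma~\ref{lem:gap-characterization-lemma}); if the break is before a repeated middle square, then $h_i$ and $h_{i+1}$ are both middle squares, so again $i \notin {\bf o}$ by the same lemma. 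Hence I may apply the Splitting Lemma at every break index and conclude
\begin{equation*}
    \mu({\bf h}) = \prod_{j=1}^{m} \mu({\bf q}_j),
\end{equation*}
where ${\bf q}_1,\dots,{\bf q}_m$ are the prepatterns of ${\bf h}$.

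Next I would show that every prepattern which is \emph{not} a pattern (i.e.\ contains no middle square) is by construction a weakly ascending sequence of squares with no middle entry, and that such a sequence has multiplicity $1$. Indeed, a weakly ascending history with all entries in the top or bottom rows and no repeated middle square is uniquely determined by its outcome: each column contributes a distinct number (the bottom entry in column $i$ or the top entry in column $i+1$, which differ), these numbers are strictly increasing along the block, and any equivalent history would have to be a weakly ascending sequence on the same index set realizing the same outcome, hence equal. (This is the same reasoning as in Theorem~\ref{thm:uniqueness-of-maximal-outcomes}, applied to a sub-block.) So non-pattern prepatterns drop out of the product, leaving
\begin{equation*}
    \mu({\bf h}) = \prod_{\text{patterns } {\bf p}_i} \mu({\bf p}_i).
\end{equation*}
Finally, for each pattern ${\bf p}_i$ viewed as a standalone history of the Three Rows Game on $\ell({\bf p}_i)$ columns, Lemma~\ref{lem:repetitions-of-patterns} gives $\mu({\bf p}_i) = \ell({\bf p}_i)$, and substituting yields $\mu({\bf h}) = \ell({\bf p}_1)\cdots\ell({\bf p}_k)$.

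The step I expect to be the main obstacle is justifying that the Splitting Lemma can be iterated cleanly across \emph{all} the break indices simultaneously, and in particular that the notion of ``multiplicity of a sub-block'' used in the product is the honest multiplicity of that block as a history of a smaller Three Rows Game --- i.e.\ that equivalences of the full history really do decompose as independent equivalences of the prepattern blocks, with no interaction across breaks. The Splitting Lemma as stated handles one break at a time; I would need to check that after splitting at one gap index $i$, both resulting pieces still contain all the gap indices of the original restricted to their ranges, so the induction on the number of breaks goes through. A secondary subtlety is that the break ``before a repeated middle square'' lands strictly between the two middle squares, so the left piece ends in a middle square and the right piece begins in one --- I should confirm this is consistent with the pattern/prepattern definitions and does not create a spurious extra middle square in either piece. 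Once these bookkeeping points are nailed down, the rest is a direct assembly of the lemmas already proved.
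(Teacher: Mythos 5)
Your proposal is correct and follows essentially the same route as the paper's proof: both use the Gap Characterization and Splitting Lemmas to factor $\mu({\bf h})$ over the prepatterns, dispose of the non-pattern prepatterns via the Uniqueness of Maximal Outcomes argument, and finish with the Multiplicity of Patterns lemma. The bookkeeping points you flag (iterating the Splitting Lemma across all breaks, and the fact that a break before a repeated middle square sits between two consecutive middle squares so the Gap Characterization Lemma still applies) are real but routine, and the paper elides them as well.
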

\begin{proof}
First, notice that by Lemma \ref{lem:gap-characterization-lemma} and \ref{lem:splitting-lemma}, if ${\bf p}'_1,\dots,{\bf p}'_{k'}$ are the prepatterns of ${\bf h}$, then $\mu({\bf h}) = \mu({\bf p}'_1)\cdots \mu({\bf p}'_{k'})$. Now, if ${\bf p}'_i$ is not a pattern, then it has no middle squares. But then by Theorem \ref{thm:uniqueness-of-maximal-outcomes} (Uniqueness of Maximal Outcomes), $\mu({\bf p}'_i) = 1$. In other words, we can drop all the terms of the product corresponding to prepatterns which are not patterns; this demonstrates that $\mu({\bf h}) = \mu({\bf p}_1)\cdots \mu({\bf p}_k)$. On the other hand, if ${\bf p}_i$ is a pattern, then by Lemma \ref{lem:repetitions-of-patterns}, $\mu({\bf p}_i) = \ell({\bf p}_i)$. Therefore, $\mu({\bf h}) = \ell({\bf p}_1) \cdots \ell({\bf p}_k)$. 
\end{proof}

Now we will restate our work in terms of Pak-Stanley labellings. The key here is that patterns take the form $(0,1,\dots,1,0)$ in the Pak-Stanley label. 

\begin{corollary}\label{cor:outcomes-and-run-repetitions}
Suppose that ${\bf o}$ is an outcome. A \textit{run} ${\bf r}$ of length $k$ in ${\bf o}$ is a sequence of $k$ consecutive positive integers $i,i+1,\dots,i+k-1 \in [n-1]$ each appearing exactly once in ${\bf o}$ such that $i-1,i+k \not\in {\bf o}$. If the length of a run ${\bf r}$ is denoted $\ell({\bf r})$, the number of histories inducing ${\bf o}$ is 
\begin{equation*}
    \mu({\bf o}) = \prod_{\text{runs ${\bf r}$ in ${\bf o}$}} (\ell({\bf r})+1).
\end{equation*}
\end{corollary}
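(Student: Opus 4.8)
The plan is to translate the language of the Three Rows Game directly into the language of Pak-Stanley labels, using the Pattern Multiplicity Theorem (Theorem~\ref{thm:pattern-repetition-theorem}) as the engine. The first step is to recall, from the correspondence established in Section~\ref{sec:three-rows-game}, that a history ${\bf h}$ of the Three Rows Game on $n$ columns corresponds bijectively to a region of $\mathscr{S}(P_n)$, that its outcome ${\bf o}({\bf h})$ corresponds to the Pak-Stanley label obtained by incrementing coordinate $i$ once for each occurrence of $i$ in ${\bf o}$, and that therefore $\mu({\bf o})$ in the game coincides with the multiplicity $\mu({\bf p})$ of the corresponding Pak-Stanley label ${\bf p}$. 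So it suffices to prove the displayed product formula for $\mu({\bf o})$.

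Next, I would invoke the Pattern Multiplicity Theorem, which says that if ${\bf h}$ has patterns ${\bf p}_1,\dots,{\bf p}_k$ then $\mu({\bf h}) = \ell({\bf p}_1)\cdots\ell({\bf p}_k)$, together with the Multiplicity of Patterns Lemma (Lemma~\ref{lem:repetitions-of-patterns}), which gives that a pattern of length $m$ has $m$ equivalent histories — note this already accounts for the ``$+1$'' shift once we see how pattern length relates to run length. The crux is then a bookkeeping lemma: the patterns of a history ${\bf h}$ are in bijection with the runs of its outcome ${\bf o}$, and a pattern spanning columns $i$ through $j$ (so of length $\ell = j-i$ in the sense of Definition~\ref{def:(pre)patterns}, counting $j-i$ columns) corresponds exactly to the run $i, i+1, \dots, j-1$ of length $j-i-1$. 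Wait — I need to be careful with the indexing conventions here, so let me instead use the Pattern Characterization Lemma (Lemma~\ref{lem:pattern-characterization-lemma}) directly: it states that $(h_i,\dots,h_j)$ is a pattern iff $i,i+1,\dots,j-1$ appear exactly once in ${\bf o}$ and $i-1,j\notin{\bf o}$, which is precisely the defining condition of a run ${\bf r}$ of length $k = j-i$. Comparing with Lemma~\ref{lem:repetitions-of-patterns}, a pattern occupying $j-i+1$ columns has $\mu = j-i+1 = k+1 = \ell({\bf r})+1$.

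Thus the final computation is just: using the correspondence, $\mu({\bf o}) = \mu({\bf h})$ for any history ${\bf h}$ inducing ${\bf o}$; by the Pattern Multiplicity Theorem this equals the product of the multiplicities of the patterns of ${\bf h}$; by Lemma~\ref{lem:pattern-characterization-lemma} the patterns biject with the runs of ${\bf o}$, with a pattern on columns $i,\dots,j$ matching the run of length $\ell({\bf r}) = j-i$; and by Lemma~\ref{lem:repetitions-of-patterns} each such pattern contributes a factor of $\ell({\bf r}) + 1$. Hence $\mu({\bf o}) = \prod_{{\bf r}}(\ell({\bf r})+1)$, as claimed.

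The main obstacle — really the only subtle point — is reconciling the off-by-one conventions: ``length of a pattern'' (columns occupied, one of which is the middle square), ``number of histories of a pattern'' (Lemma~\ref{lem:repetitions-of-patterns} gives this as the column count), and ``length of a run'' (the number of integers appearing once in ${\bf o}$, which is one fewer than the column count). I would nail this down by explicitly writing out that a pattern on columns $i$ through $j$ occupies $j-i+1$ columns, produces the run $i,\dots,j-1$ of $j-i$ integers, and by Lemma~\ref{lem:repetitions-of-patterns} has multiplicity $j-i+1 = (j-i)+1 = \ell({\bf r})+1$. One also needs the (easy) observation that prepatterns which are \emph{not} patterns contribute a factor of $1$ and correspond to no run, so they may be silently dropped from the product — this is already handled inside the proof of Theorem~\ref{thm:pattern-repetition-theorem}, so it can simply be cited.
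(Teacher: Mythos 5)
Your proposal is correct and follows exactly the paper's route: the paper's proof is a one-line citation of Theorem~\ref{thm:pattern-repetition-theorem} and Lemma~\ref{lem:pattern-characterization-lemma}, and you have simply made explicit the off-by-one bookkeeping (a pattern on columns $i$ through $j$ occupies $j-i+1$ columns but yields the run $i,\dots,j-1$ of length $j-i$) that those citations leave implicit. The momentary indexing slip midway through is self-corrected, and the final computation is right.
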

\begin{proof}
This follows directly from Theorem \ref{thm:pattern-repetition-theorem} and Lemma \ref{lem:pattern-characterization-lemma}.
\end{proof}

\begin{theorem}[Path Multiplicity Theorem]\label{thm:path-repetition-theorem}
Suppose ${\bf p} = (p_1,\dots,p_n)$ is a
Pak-Stanley label on $P_n$. A \textit{run} ${\bf r}$ of length $n$ in ${\bf p}$ is a section of ${\bf p}$ of the form $(0,1,\dots,1,0)$ with $n$ $1$s. If the length of a run ${\bf r}$ is denoted $\ell({\bf r})$, then the multiplicity of the label ${\bf p}$ in in the $P_n$-Shi arrangement is
\begin{equation*}
    \mu({\bf p}) = \prod_{\text{runs ${\bf r}$ in ${\bf p}$}} (\ell({\bf r})+1).
\end{equation*}
    \end{theorem}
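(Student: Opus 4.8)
The plan is to deduce the Path Multiplicity Theorem as a direct translation of Corollary~\ref{cor:outcomes-and-run-repetitions} through the correspondence between histories of the Three Rows Game and regions of $\mathscr{S}(P_n)$, and between outcomes and Pak-Stanley labels. Recall from Section~\ref{sec:three-rows-game} that a region $R$ with Pak-Stanley label ${\bf p} = (p_0,\dots,p_{n-1})$ corresponds to an outcome ${\bf o}$ in which the integer $i$ (for $i \in \{0,1,\dots,n-1\}$, indexing the edge $\{i,i+1\}$ of $P_n$) appears exactly $p_i$ times, so that incrementing the $i$th coordinate once per occurrence of $i$ recovers ${\bf p}$ from $(0,\dots,0)$. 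Since the Pak-Stanley label uniquely determines, and is determined by, the outcome ${\bf o}$, the multiplicity $\mu({\bf p})$ of the label (the number of regions carrying it) equals the multiplicity $\mu({\bf o})$ of the outcome (the number of histories inducing it).

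So the first step is to make this dictionary precise: state that ${\bf p}$ is a valid Pak-Stanley label if and only if the corresponding multiset ${\bf o}$ is a valid outcome, and that the number of regions labelled ${\bf p}$ equals $\mu({\bf o})$. The second step is to match up the two notions of ``run.'' In Corollary~\ref{cor:outcomes-and-run-repetitions} a run of ${\bf o}$ is a maximal block of consecutive integers $i, i+1,\dots,i+k-1$ each appearing exactly once in ${\bf o}$, bounded by $i-1,i+k \notin {\bf o}$. Under the dictionary, ``$i$ appears exactly once'' means $p_i = 1$, ``$i-1 \notin {\bf o}$'' means $p_{i-1} = 0$, and ``$i+k \notin {\bf o}$'' means $p_{i+k} = 0$; hence a run of ${\bf o}$ of length $k$ corresponds exactly to a section $(p_{i-1}, p_i, \dots, p_{i+k-1}, p_{i+k}) = (0,1,\dots,1,0)$ of ${\bf p}$ with $k$ ones, i.e.\ a run of ${\bf p}$ in the sense of the theorem statement, with $\ell({\bf r})$ counted consistently as the number of ones $k$. (A small bookkeeping remark handles the boundary columns $0$ and $n-1$: since these correspond to edges rather than interior positions, one should check that a run abutting the end of the word still has the right shape; this is routine once the indexing convention is fixed.) Then the product formula $\mu({\bf o}) = \prod_{\bf r}(\ell({\bf r})+1)$ transfers verbatim to $\mu({\bf p}) = \prod_{\bf r}(\ell({\bf r})+1)$.

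I do not anticipate a serious obstacle; the content of the theorem is already in Theorem~\ref{thm:pattern-repetition-theorem} and Corollary~\ref{cor:outcomes-and-run-repetitions}, and the only work is verifying the translation is faithful. The one place to be careful is the off-by-one nature of the indexing: the board of the Three Rows Game has $n-1$ columns (one per edge of $P_n$) labelled by integers $0,\dots,n-1$ read off the top/bottom rows, whereas ${\bf p}$ has $n$ coordinates indexed $0,\dots,n-1$ (one per vertex). I would spell out once and for all which column contributes to which coordinate, confirm that ``$p_i=1$'' genuinely corresponds to ``$i$ occurs once in the outcome,'' and confirm that the endpoints of runs being $0$ matches ``$i-1, i+k \notin {\bf o}$,'' including the degenerate cases where a run touches coordinate $0$ or coordinate $n-1$. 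With that in hand the proof is a single sentence: apply Corollary~\ref{cor:outcomes-and-run-repetitions} to the outcome ${\bf o}$ corresponding to ${\bf p}$, and observe that its runs are precisely the runs of ${\bf p}$.

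\begin{proof}
By the correspondence of Section~\ref{sec:three-rows-game}, the label ${\bf p} = (p_0,\dots,p_{n-1})$ corresponds to the unique outcome ${\bf o}$ of the Three Rows Game on $n-1$ columns in which each integer $i \in \{0,\dots,n-1\}$ occurs with multiplicity $p_i$; moreover the number of regions of $\mathscr{S}(P_n)$ carrying the label ${\bf p}$ equals the number of histories inducing ${\bf o}$, namely $\mu({\bf o})$. A section of ${\bf p}$ of the form $(0,1,\dots,1,0)$ with $k$ ones, occupying coordinates $i-1,i,\dots,i+k-1,i+k$, corresponds under this dictionary exactly to a set of consecutive integers $i,i+1,\dots,i+k-1$ each appearing once in ${\bf o}$ with $i-1, i+k \notin {\bf o}$, i.e.\ to a run of ${\bf o}$ of length $k$ in the sense of Corollary~\ref{cor:outcomes-and-run-repetitions}; conversely every run of ${\bf o}$ arises this way. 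Hence the runs of ${\bf p}$ and the runs of ${\bf o}$ are in length-preserving bijection, and Corollary~\ref{cor:outcomes-and-run-repetitions} gives
\begin{equation*}
    \mu({\bf p}) = \mu({\bf o}) = \prod_{\text{runs ${\bf r}$ in ${\bf o}$}} (\ell({\bf r})+1) = \prod_{\text{runs ${\bf r}$ in ${\bf p}$}} (\ell({\bf r})+1),
\end{equation*}
as claimed.
\end{proof}
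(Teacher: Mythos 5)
Your proposal is correct and is essentially the paper's own proof: the paper likewise deduces the theorem directly from Corollary~\ref{cor:outcomes-and-run-repetitions} via the dictionary that sends an outcome ${\bf o}$ to the label whose $i$th coordinate counts the occurrences of $i$ in ${\bf o}$. You simply spell out the run-to-run translation and the indexing bookkeeping in more detail than the paper does.
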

\begin{proof}
This follows directly from Corollary \ref{cor:outcomes-and-run-repetitions} and the connection between histories and Pak-Stanley labellings; namely, an outcome ${\bf o}$ corresponds to the Pak-Stanley label ${\bf p}$ given by letting coordinate $i$ of the Pak-Stanley label equal the number of occurences of $i$ in ${\bf o}$.
\end{proof}

Using the Path Multiplicity Theorem, we can further explore the structure of the Pak-Stanley labels of the $P_n$-Shi arrangement. Indeed, we give two opposite results: a characterization of all the labels which appear only once, and a characterization of the labels which appear the maximum number of times (and how many times they appear). The former follows from the Path Multiplicity Theorem (Theorem \ref{thm:path-repetition-theorem}). 

\begin{corollary}\label{cor:characterization-of-unique-labels-on-P_n}
Suppose ${\bf p} = (p_1,\dots,p_n)$ is a $(P_n)_\bullet$-parking function. Then ${\bf p}$ appears uniquely as a Pak-Stanley label for $P_n$ if and only if, for every nonconsecutive pair of $0$s in ${\bf p}$, there is a $2$ between the $0$s.
\end{corollary}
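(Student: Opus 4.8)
The plan is to deduce the corollary directly from the Path Multiplicity Theorem (Theorem~\ref{thm:path-repetition-theorem}), which gives $\mu({\bf p}) = \prod_{{\bf r}}(\ell({\bf r})+1)$ with the product over the runs of ${\bf p}$. Since a run of length $\ell({\bf r}) \ge 1$ contributes a factor $\ge 2$, we get $\mu({\bf p}) = 1$ if and only if ${\bf p}$ contains no contiguous block of the form $(0,1,\dots,1,0)$ with at least one $1$. So the whole corollary reduces to showing that ``${\bf p}$ has no such block'' is equivalent to the stated condition on $0$'s.

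Before the equivalence I would record two small facts. First, every coordinate of ${\bf p}$ lies in $\{0,1,2\}$: in the Three Rows Game each value $i$ occurs at most twice on the board --- as the bottom of column $i-1$ and the top of column $i$ --- so $p_i \le 2$; equivalently, ${\bf p}$ is a superstable configuration on $(P_n)_\bullet$ (Theorem~\ref{thm:superstable-configurations-are-G-parking-functions}), and such configurations lie in $\{0,1,2\}^n$, as used in Lemma~\ref{lem:Carls-criterion}. Second, I would fix the reading of ``nonconsecutive pair of $0$'s'': two coordinates $p_a = p_b = 0$ with $a<b$, $b - a \ge 2$, and $p_c \ne 0$ for every $a < c < b$; that is, two occurrences of $0$ that are neighbors in the list of $0$'s of ${\bf p}$ but occupy nonadjacent positions. (This reading is forced: for instance ${\bf p} = (0,0,0)$ is a $(P_3)_\bullet$-parking function with $\mu({\bf p}) = 1$, yet its two outer $0$'s have no $2$ between them.)

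With this in hand the equivalence is immediate in both directions. If ${\bf p}$ has a block $(p_a,\dots,p_b) = (0,1,\dots,1,0)$ with at least one $1$, then $p_a = p_b = 0$ is a nonconsecutive pair (everything strictly between is a $1$, hence nonzero) with no $2$ between it, so the condition fails. Conversely, given a nonconsecutive pair $p_a = p_b = 0$ with no $2$ strictly between, the intermediate coordinates are nonzero by the definition of the pair and lie in $\{0,1,2\}$, hence all equal $1$, so $(p_a,\dots,p_b)$ is exactly such a block; thus if ${\bf p}$ has no such block, every nonconsecutive pair of $0$'s contains a $2$. Combining with the first paragraph gives $\mu({\bf p}) = 1$ if and only if every nonconsecutive pair of $0$'s has a $2$ between its members. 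I do not expect a genuine obstacle here --- all of the content sits in the Path Multiplicity Theorem; the one point that needs care is precisely the bookkeeping in the converse direction, where ``nonconsecutive pair'' must be read as ``neighbors in the list of $0$'s'' (so that the intermediate coordinates are all nonzero), since the claim is false for arbitrary pairs of $0$'s at nonadjacent positions.
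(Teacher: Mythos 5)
Your proposal is correct and matches the paper's approach: the paper simply asserts that this corollary follows from the Path Multiplicity Theorem (Theorem~\ref{thm:path-repetition-theorem}), and you supply exactly the routine details that assertion elides, namely that $\mu({\bf p})=1$ iff ${\bf p}$ has no block $(0,1,\dots,1,0)$ with at least one $1$, together with the observation that entries lie in $\{0,1,2\}$ and the (necessary) reading of ``nonconsecutive pair of $0$s'' as adjacent in the list of $0$s. No gaps.
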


The latter is a little more difficult, and involves the following optimization problem: if $n_1,\dots,n_k$ are positive integers which add up to $n$, what is the largest possible value of $n_1\cdots n_k$?

\begin{lemma}\label{lem:technical-lemma-optimization}
Suppose that $n > 1$ is a positive integer. Let
\begin{equation*}
    S(n) = \sup_{\substack{n_1,\dots,n_k \in \Z^+ \\ n_1 + \cdots + n_k = n}} n_1 \cdots n_k.
\end{equation*}
Then, 
\begin{enumerate}
    \item if $n \equiv 0 \bmod 3$, then $S(n) = 3^{n/3}$,
    \item if $n \equiv 1 \bmod 3$, then $S(n) = 3^{(n-4)/3} \cdot 2^2$,
    \item if $n \equiv 2 \bmod 3$, then $S(n) = 3^{(n-2)/3} \cdot 2$.
\end{enumerate}
\end{lemma}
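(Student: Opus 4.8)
The plan is to prove this classical optimization result by a short sequence of exchange arguments that successively constrain the optimal decomposition, followed by a direct computation in each residue class. Throughout, I treat a decomposition as an unordered tuple $(n_1,\dots,n_k)$ of positive integers summing to $n$; since only finitely many decompositions exist, the supremum is attained, so I may speak of an \emph{optimal} decomposition.

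First I would eliminate $1$'s: if some $n_i = 1$ and $k \geq 2$, then replacing the pair $(n_i, n_j) = (1, n_j)$ by the single part $n_i + n_j = n_j + 1$ strictly increases the product (since $1 \cdot n_j < n_j + 1$), unless $n = 1$, which is excluded. Hence in any optimal decomposition every part is at least $2$. Next I would eliminate parts that are $\geq 4$: if some $n_i \geq 4$, then splitting it as $n_i = 2 + (n_i - 2)$ replaces the factor $n_i$ by $2(n_i - 2) = 2n_i - 4 \geq n_i$, with equality only when $n_i = 4$; so we may assume every part lies in $\{2, 3\}$, and if some part equals $4$ we may freely replace it by two $2$'s without changing the product. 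Finally I would limit the number of $2$'s: if there are at least three $2$'s, replacing three parts $2+2+2 = 6$ by two parts $3+3 = 6$ changes the product factor from $2^3 = 8$ to $3^2 = 9$, a strict increase. Therefore some optimal decomposition uses at most two $2$'s and otherwise only $3$'s.

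With that structural reduction in hand, the three cases follow by counting. If $n \equiv 0 \bmod 3$, the only decomposition into $2$'s and $3$'s with at most two $2$'s is $n/3$ copies of $3$, giving $S(n) = 3^{n/3}$. If $n \equiv 1 \bmod 3$, we cannot use zero or one $2$'s (the residue forces the number of $2$'s to be $\equiv 2 \bmod 3$ among $\{0,1,2\}$, i.e. exactly two), so the decomposition is $(n-4)/3$ copies of $3$ together with two $2$'s, giving $S(n) = 3^{(n-4)/3} \cdot 2^2$. (Here one should note $n \geq 4$ in this case, since $n = 1$ is excluded, so $(n-4)/3 \geq 0$ is a valid exponent.) If $n \equiv 2 \bmod 3$, exactly one $2$ is forced, giving $(n-2)/3$ copies of $3$ and a single $2$, so $S(n) = 3^{(n-2)/3} \cdot 2$.

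The main thing requiring care — though it is not deep — is justifying that the exchange moves genuinely certify optimality rather than merely producing \emph{a} good decomposition: each move is a strict improvement except in the explicitly noted boundary cases ($n_i = 4$ split into two $2$'s is product-preserving), so any decomposition not of the prescribed form can be strictly improved, hence is not optimal; and conversely the prescribed form is unique in each residue class (up to the harmless $4 \leftrightarrow 2+2$ ambiguity), so it must be the maximizer. I would also handle the small edge cases $n = 2, 3, 4, 5$ by inspection to confirm the formulas (e.g. $S(4) = 4 = 3^0 \cdot 2^2$, $S(5) = 6 = 3^1 \cdot 2$), since the counting argument's claim about the forced number of $2$'s is cleanest to state once $n$ is large enough that a $3$ is actually present, but in fact the formulas remain correct at the boundary.
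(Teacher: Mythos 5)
Your proof is correct and follows essentially the same route as the paper's: split any part $\geq 4$ into $2$ and $n_i-2$, use $3+3=2+2+2$ with $3^2>2^3$ to bound the number of $2$'s by two, and finish by residue-class inspection. The only difference is that you explicitly dispose of parts equal to $1$ by merging, a small step the paper's proof leaves implicit.
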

\begin{proof}
If $n_i \geq 4$, then $n_i$ can be replaced by $n_i - 2$ and $2$, which have product $2n_i - 4 \geq n_i$. Thus, we may assume that $n_i \in \{2,3\}$ for each $i$. But $3 + 3 = 2 + 2 + 2$ yet $3^2 > 2^3$, so there will never be three $2$s among the $n_i$. The result then follows in each case by inspection.
\end{proof}

Using this optimization result, we can compute the Pak-Stanley labels which appear the most often.

\begin{proposition}\label{prop:maximally-occuring-pak-stanley-labels}
Suppose that $n > 2$. Then, the following holds.
\begin{enumerate}
    \item If $n \equiv 1 \bmod 3$, then the unique Pak-Stanley label for $P_n$ which appears most often is $0110110 \cdots 110$, and it is repeated $3^{(n-1)/3}$ times.
    \item If $n \equiv 2 \bmod 3$, then any of the ${(n+1)/3 \choose 2}$ Pak-Stanley labels for $P_n$ starting with a $0$ and then containing $(n-5)/3$ copies of $110$ and $2$ copies of $10$ (for example, $01010110 \cdots 110$) in any order appear most often, and they are repeated $2^2 \cdot 3^{(n-5)/3}$ times.
    \item If $n \equiv 0 \bmod 3$, then any of the $\frac{n}{3}$ Pak-Stanley labels for $P_n$ starting with a $0$ and then containing $(n-3)/3$ copies of $110$ and $1$ copy of $10$ (for example, $010110 \cdots 110$) in any order appear most often, and they are repeated $2 \cdot 3^{(n-3)/3}$ times.
\end{enumerate}
\end{proposition}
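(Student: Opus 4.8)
The plan is to bound $\mu({\bf p})$ using the Path Multiplicity Theorem (Theorem~\ref{thm:path-repetition-theorem}), then extract the extremal labels and count them via Lemma~\ref{lem:technical-lemma-optimization}. Throughout, write $S = S(n-1)$ for the quantity in Lemma~\ref{lem:technical-lemma-optimization}; note that $S$ is strictly increasing on $\{2,3,\dots\}$, since incrementing one part of an optimal decomposition strictly increases its product.

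\textbf{Step 1 (the upper bound).} Recall a run of ${\bf p}$ is a maximal subword of the form $(0,1,\dots,1,0)$; runs of length $0$, if counted, contribute a factor $1$ to the product and may be ignored. If a coordinate $c$ is strictly interior to a run it carries a $1$, so it cannot be the boundary $0$ of another run; hence two distinct runs overlap in at most one coordinate, which must then be a shared boundary $0$, and moreover two distinct runs cannot share their left endpoint. Consequently the runs organize into maximal chains $C_1,\dots,C_s$ of runs joined through shared boundary zeros, and a chain whose runs have lengths $\ell_1,\dots,\ell_t$ occupies exactly $t+1+\sum_i \ell_i$ consecutive coordinates of ${\bf p}$, so $\sum_i(\ell_i+1)$ is one less than that. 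Since the chains are disjoint subwords of the length-$n$ word ${\bf p}$, summing over all runs gives $\sum_{\bf r}(\ell({\bf r})+1)\le n-s$. If $s=0$ then $\mu({\bf p})=1<S$; otherwise Theorem~\ref{thm:path-repetition-theorem} and monotonicity of $S$ give $\mu({\bf p})=\prod_{\bf r}(\ell({\bf r})+1)\le S\!\left(\sum_{\bf r}(\ell({\bf r})+1)\right)\le S(n-s)\le S$.

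\textbf{Step 2 (the extremal labels).} Equality $\mu({\bf p})=S$ forces $\sum_{\bf r}(\ell({\bf r})+1)=n-1$ (otherwise strict monotonicity of $S$ makes the bound strict), hence $s=1$ and the single chain fills all $n$ coordinates: ${\bf p}=(0,1^{\ell_1},0,1^{\ell_2},0,\dots,0,1^{\ell_m},0)$ with $\sum_i(\ell_i+1)=n-1$ and no entry equal to $2$. Moreover $\prod_i(\ell_i+1)=S$, so $(\ell_1+1,\dots,\ell_m+1)$ realizes the supremum in Lemma~\ref{lem:technical-lemma-optimization} for $n-1$. Conversely, any such word ${\bf p}$ has a single $2$-free block containing a $0$, so it is superstable, hence a $(P_n)_\bullet$-parking function by Lemma~\ref{lem:Carls-criterion} and Theorem~\ref{thm:superstable-configurations-are-G-parking-functions}, and by Theorem~\ref{thm:path-repetition-theorem} its multiplicity is $\prod_i(\ell_i+1)$. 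Thus the maximum multiplicity is $S(n-1)$, attained precisely by these words with $(\ell_i+1)$ an optimal decomposition of $n-1$.

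\textbf{Step 3 (counting) and the main obstacle.} Writing $n-1=3a+2b$ with $0\le b\le 2$, the proof of Lemma~\ref{lem:technical-lemma-optimization} shows an optimal decomposition uses only parts $2$ and $3$ with at most two $2$'s, so $b$ is determined by $n\bmod 3$: $b=0$ if $n\equiv1$, $b=2$ if $n\equiv2$, $b=1$ if $n\equiv0$. In the label a part $3$ becomes a block $110$ and a part $2$ becomes a block $10$, read after the leading $0$; so the extremal labels are $0$ followed by $a$ copies of $110$ and $b$ copies of $10$ in any order, numbering $\binom{a+b}{b}$. Substituting $a=(n-1)/3,(n-5)/3,(n-3)/3$ and $b=0,2,1$ gives the unique label (case $n\equiv1$), the $\binom{(n+1)/3}{2}$ labels (case $n\equiv2$), and the $n/3$ labels (case $n\equiv0$), with multiplicities $3^{(n-1)/3}$, $2^2\,3^{(n-5)/3}$, and $2\cdot 3^{(n-3)/3}$, as claimed. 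The only delicate point is Step~1 — verifying that runs can overlap only in a shared boundary $0$, so that the chain decomposition is forced and chaining runs end-to-end is the space-efficient configuration; granting this, the statement reduces mechanically to Lemma~\ref{lem:technical-lemma-optimization} and Theorem~\ref{thm:path-repetition-theorem}.
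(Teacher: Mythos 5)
Your Steps 1 and 2 are correct, and they supply rigor that the paper's own proof (a two-sentence sketch asserting that an extremal label ``should be made entirely out of patterns'') simply omits: the observation that two distinct runs can overlap only in a shared boundary zero, the resulting chain decomposition with $\sum_{\mathbf{r}}(\ell(\mathbf{r})+1)\le n-s$, and the strict monotonicity of $S$ together give an actual proof that the maximum multiplicity is $S(n-1)$ and that it is attained exactly by the labels $(0,1^{\ell_1},0,\dots,0,1^{\ell_m},0)$ for which $(\ell_1+1,\dots,\ell_m+1)$ is an optimal decomposition of $n-1$. Up to that point your argument is the same reduction to Lemma~\ref{lem:technical-lemma-optimization} that the paper intends, but with the reduction justified.

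The gap is in Step 3, in the sentence ``the proof of Lemma~\ref{lem:technical-lemma-optimization} shows an optimal decomposition uses only parts $2$ and $3$.'' That proof only shows that \emph{some} optimal decomposition has this form: the replacement $n_i\mapsto(n_i-2,2)$ satisfies $2n_i-4\ge n_i$ with \emph{equality} when $n_i=4$, so a part equal to $4$ may be traded for two $2$'s without changing the product, and both multisets are optimal. A short check (no part $1$, no part $\ge 5$, product not divisible by $8$) shows the optimal multiset is unique in cases (1) and (3) --- where $S(n-1)$ is odd, respectively $\equiv 2 \bmod 4$ --- but \emph{not} in case (2): when $n\equiv 2\bmod 3$ and $n\ge 5$, the multiset $\{3,\dots,3,4\}$ with $(n-5)/3$ threes also attains $S(n-1)=2^2\cdot 3^{(n-5)/3}$, and your own Step 2 then certifies $(n-2)/3$ additional extremal labels, namely those containing one block $1110$. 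Concretely, for $n=5$ both $01010$ and $01110$ have multiplicity $4=S(4)$, and for $n=8$ the label $01110110$ has multiplicity $4\cdot 3=12=S(7)$ alongside the three labels the proposition lists. So case (2) of the statement itself undercounts the maximally occurring labels; your more careful Steps 1--2 actually expose this, and Step 3 then papers over it with the unjustified uniqueness claim about optimal decompositions --- the same oversight as in the paper's proof. Contrary to your closing remark, the delicate point is not Step 1 (which you handle correctly) but this enumeration of optimal multisets.
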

\begin{proof}
This result follows from the fact that repetitions are created by patterns, so to achieve a label which has maximum multiplicity, we should construct a label made entirely out of patterns. Such a label starts with a $0$ which is followed by a number of runs of the form $1 \cdots 10$. The total length of all of these runs (including the $0$s) is $n-1$, and the product of their lengths (including the final $0$ in each one) is the desired quantity. Therefore, this problem reduces to the problem in Lemma \ref{lem:technical-lemma-optimization} of finding $S(n-1)$.
\end{proof}

Notice that not only do we know the number of vertices with the label ${\bf p}$, we also know how to list all vertices with label ${\bf p}$. This gives us the ability to easily iterate over all such vertices if we so desire. We discuss this further in the next section.

\subsection{Iterating Over Regions with a Fixed Label}
In this section, we discuss two related problems: first, the problem of listing all regions with a fixed label, and second, the problem of finding where the regions with a fixed label are located.

\begin{question}
Given a Pak-Stanley label ${\bf p}$ for $P_n$, how can we list the regions of the $P_n$-Shi arrangement with label ${\bf p}$?
\end{question}
\begin{example}\label{ex:pak-to-hitory}
Given the Pak-Stanley label of a certain region, we may reconstruct the history of the Three Rows Game to determine which choices are fixed. For example, given the Pak-Stanley label $(0,1,1,0,2,0)$, we may determine that the outcome was $\{1,2,4,4\}$. Since $4$ only appears twice in the Three Rows Game, and appears twice in the outcome, the following two columns are fixed:
\begin{table}[H]
    \centering
    \begin{tabular}{|c|c|c|c|c|}
    \hline
    0 & 1 & 2 & 3 & \cellcolor[HTML]{C0C0C0}4 \\ \hline
     &  &  &  &  \\ \hline
    1 & 2 & 3 & \cellcolor[HTML]{C0C0C0}4 & 5 \\ \hline
    \end{tabular}
    \label{tab:fixing-columns-in-the-three-rows-game}
\end{table}

Now, notice that $\{1,2\}$ is a run in the sense of Corollary \ref{cor:outcomes-and-run-repetitions}, and therefore corresponds to a pattern of length $3$. This yields the following three options for the first three rows: 

\begin{figure}[H]
    \centering
    \begin{tabular}{|c|c|c|c|c|}
        \hline
        0 & \cellcolor[HTML]{C0C0C0}1 & \cellcolor[HTML]{C0C0C0}2 & 3 & \cellcolor[HTML]{C0C0C0}4 \\ \hline
        \cellcolor[HTML]{C0C0C0} &  &  &  &  \\ \hline
        1 & 2 & 3 & \cellcolor[HTML]{C0C0C0}4 & 5 \\ \hline
        \end{tabular}\qquad
        \begin{tabular}{|
        >{\columncolor[HTML]{FFFFFF}}c |c|c|c|c|}
        \hline
        0 & \cellcolor[HTML]{FFFFFF}1 & \cellcolor[HTML]{C0C0C0}2 & 3 & \cellcolor[HTML]{C0C0C0}4 \\ \hline
         & \cellcolor[HTML]{C0C0C0} &  &  &  \\ \hline
        \cellcolor[HTML]{C0C0C0}1 & 2 & 3 & \cellcolor[HTML]{C0C0C0}4 & 5 \\ \hline
        \end{tabular}\qquad
        \begin{tabular}{|
        >{\columncolor[HTML]{FFFFFF}}c |
        >{\columncolor[HTML]{FFFFFF}}c |c|c|c|}
        \hline
        0 & 1 & \cellcolor[HTML]{FFFFFF}2 & 3 & \cellcolor[HTML]{C0C0C0}4 \\ \hline
         &  & \cellcolor[HTML]{C0C0C0}{\color[HTML]{C0C0C0} } &  &  \\ \hline
        \cellcolor[HTML]{C0C0C0}1 & \cellcolor[HTML]{C0C0C0}2 & 3 & \cellcolor[HTML]{C0C0C0}4 & 5 \\ \hline
    \end{tabular}
    \caption{Enumerating possible histories for the outcome $\{1,2,4,4\}$.}
    \label{fig:all-possibilities}
\end{figure}
\end{example}

This example illustrates the general strategy. First, convert the Pak-Stanley label to an outcome of the Three Rows Game. Then, consider all the prepatterns which are not patterns; as discussed in the proof of Theorem \ref{thm:pattern-repetition-theorem}, these have only one option, so we fix this option. Then, for each of the remaining patterns ${\bf p}$, there are $\ell({\bf p})$ options of the following form: 
\begin{table}[H]
    \centering
    \begin{tabular}{|l|l|l|l|}
    \hline
    \rowcolor[HTML]{C0C0C0} 
    \cellcolor[HTML]{FFFFFF} & $\;\,$ & $\cdots$ & $\;\,$ \\ \hline
    \cellcolor[HTML]{C0C0C0}$\;\,$ &  &  &  \\ \hline
    \cellcolor[HTML]{FFFFFF} &  &  &  \\ \hline
    \end{tabular}
    $\cdots$
    \begin{tabular}{|l|l|l|
    >{\columncolor[HTML]{FFFFFF}}l |l|l|l|}
    \hline
     &  &  &  & \cellcolor[HTML]{C0C0C0}$\;\,$ & \cellcolor[HTML]{C0C0C0}$\cdots$ & \cellcolor[HTML]{C0C0C0}$\;\,$ \\ \hline
     &  &  & \cellcolor[HTML]{C0C0C0}$\;\,$ &  &  &  \\ \hline
    \cellcolor[HTML]{C0C0C0}$\;\,$ & \cellcolor[HTML]{C0C0C0}$\cdots$ & \cellcolor[HTML]{C0C0C0}$\;\,$ &  &  &  &  \\ \hline
    \end{tabular}
    $\cdots$
    \begin{tabular}{|l|l|l|l|}
    \hline
     &  &  & \cellcolor[HTML]{FFFFFF} \\ \hline
     &  &  & \cellcolor[HTML]{C0C0C0}$\;\,$ \\ \hline
    \rowcolor[HTML]{C0C0C0} 
    $\;\,$ & $\cdots$ & $\;\,$ & \cellcolor[HTML]{FFFFFF} \\ \hline
    \end{tabular}
\end{table}
By listing every possible combination of choices for the patterns, we are able to list all the possible histories; by converting the cells back into inequalities, this gives us the desired list of regions with a fixed label. Next, instead of finding a single region, we would like to determine the following.

\begin{question}
Given a Pak-Stanley label ${\bf p}$ for $P_n$, how can we find the minimal convex union of regions containing all the regions with label ${\bf p}$?
\end{question}

The method for solving this problem is similar to the first problem. First, we consider columns in which there is exactly one entry we can can choose corresponding to satisfying exactly one inequality of the form $x_i - x_{i+1} < 0$ (top), $0 < x_i - x_{i+1} < 1$ (middle), or $x_i - x_{i+1} > 1$ (bottom). Then, there are some columns in which we can either choose the bottom or middle entry and yield inequalities of the form $x_i - x_{i+1} > 0$. Similarly, there are some columns in which we can either choose the middle or top entry and yield inequalities of the form $x_i - x_{i+1} < 1$. Lastly, in columns in which we can choose any entry we obtain no inequalities. 

\begin{example}
Continuing Example \ref{ex:pak-to-hitory}, take the Pak-Stanley label $(0,1,1,0,2,0)$, which corresponds to the outcome $\{1,2,4,4\}$. By ``overlapping" the possible histories in Figure \ref{fig:all-possibilities}, we obtain the following board of possibilities for each column: 
\begin{figure}[H]
    \centering
    \begin{tabular}{lllll}
    \hline
    \multicolumn{1}{|l|}{0} & \multicolumn{1}{l|}{\cellcolor[HTML]{C0C0C0}1} & \multicolumn{1}{l|}{\cellcolor[HTML]{C0C0C0}2} & \multicolumn{1}{l|}{3} & \multicolumn{1}{l|}{\cellcolor[HTML]{C0C0C0}4} \\ \hline
    \multicolumn{1}{|l|}{\cellcolor[HTML]{C0C0C0}} & \multicolumn{1}{l|}{\cellcolor[HTML]{C0C0C0}} & \multicolumn{1}{l|}{\cellcolor[HTML]{C0C0C0}} & \multicolumn{1}{l|}{} & \multicolumn{1}{l|}{} \\ \hline
    \multicolumn{1}{|l|}{\cellcolor[HTML]{C0C0C0}1} & \multicolumn{1}{l|}{\cellcolor[HTML]{C0C0C0}2} & \multicolumn{1}{l|}{3} & \multicolumn{1}{l|}{\cellcolor[HTML]{C0C0C0}4} & \multicolumn{1}{l|}{5} \\ \hline
    \end{tabular}
    \caption{Minimal convex union of regions.}
    \label{fig:minimal-convex-union-of-regions}
\end{figure}
This yields the following list of inequalities, which gives the desired minimal convex union of regions: $x_0 - x_1 > 0$, $x_2 - x_3 < 1$, $x_3 - x_4 > 1$, and $x_4 - x_5 < 0$. 
\end{example}

\section{Trees and the $T$-Three Rows Game}\label{sec:five}
 
\subsection{Defining the $T$-Three Rows Game} 
In this section, we introduce the $T$-Three Rows Game to generalize our results from path graphs to tree graphs.
\begin{definition}[$T$-Three Rows Game] Let $T = (V,E)$ be a tree with $n$ vertices. The \textit{$T$-Three Rows Game} is played on a board with $n-1$ columns and 3 rows. Each column corresponds to an edge $\{i,j\}$: the top square has an $i$, the middle square is blank, and the bottom square has a $j$.

\begin{figure}[H]
\begin{tikzpicture}
\draw 
    (0,0) to (1.5,0) to (3,1) to (4.5,1)
    (3,1) to (1.5,2)
    ;
\fill 
    (0,0) circle (3pt) 
    (1.5,0) circle  (3pt) 
    (1.5,2) circle  (3pt) 
    (3,1) circle (3pt) 
    (4.5,1) circle (3pt); 
    \node at (0,-0.5) {$0$};
    \node at (1.5,-0.5) {$1$};
    \node at (1.5,1.5) {$3$};
    \node at (3,0.5) {$2$};
    \node at (4.5,0.5) {$4$};
    
    \node at (5.5,1) {$\Rightarrow$};
    
    \node at (7.5,1) {\begin{tabular}{|l|l|l|l|}
    \hline
    0 & 1 & 2 & 2 \\ \hline
     &  &  &  \\ \hline
    1 & 2 & 3 & 4 \\ \hline
    \end{tabular}};
\end{tikzpicture}
\end{figure}
As in the original Three Rows Game, exactly one square is chosen from each column of the board. Histories and outcomes are defined analogously.
\end{definition}


The same analogy as for path graphs holds between vertices (resp. labels) in the Shi adjacency digraph $\Gamma \mathscr{S}(T)$ and histories (resp. outcomes) of the $T$-Three Rows Game. Again, each column of the board corresponds to a pair of parallel hyperplanes, and our choice determines the region's relation to these hyperplanes. Because $T$ is acyclic, fixing any of these inequalities does not fix any of the other inequalities, just as in the case of the path graph. This yields a few basic properties:
\begin{proposition} Let $T$ be a tree with $n$ vertices. There  are $3^{n-1}$ vertices and $2^{n-1}$ sinks (which are all labelled with maximal $T$-parking functions) in the Shi adjacency digraph of $T$.
\end{proposition}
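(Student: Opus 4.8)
The plan is to push through the bijection between vertices of $\Gamma\mathscr{S}(T)$ and histories of the $T$-Three Rows Game, exactly as was done for path graphs. First I would recall that on any region of $\mathscr{S}(T)$ each difference $x_i-x_j$ (for an edge $\{i,j\}$, $i<j$) has constant position relative to $0$ and $1$, so a region determines, and is contained in, one of the three slab sets $x_i-x_j<0$, $0<x_i-x_j<1$, $x_i-x_j>1$ for each edge; conversely each such slab set is an intersection of open half-spaces and open slabs, hence open, convex, and disjoint from every hyperplane of $\mathscr{S}(T)$, so it is either empty or a single region. To see that all $3^{n-1}$ slab-assignments are realized, root $T$ at a vertex $r$, set $x_r=0$, and assign values to the remaining vertices in an order extending the rooted-tree partial order: each non-root vertex $v$ is joined to its parent by a unique edge, exactly one of whose endpoints is already assigned, so the single strict linear constraint the chosen slab imposes on that edge can be met by choosing the value of the remaining endpoint anywhere in a nonempty open interval; no later choice alters an earlier difference, so the resulting point lies in the interior of the prescribed region. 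Hence there are exactly $3^{n-1}$ regions, and so $3^{n-1}$ vertices in $\Gamma\mathscr{S}(T)$.

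Next I would count the sinks. As explained in Section \ref{sec:SAD-of-trees}, acyclicity of $T$ realizes $\Gamma\mathscr{S}(T)$ as the subdivided cube $[-1,1]^{n-1}$ on vertex set $\{-1,0,1\}^{n-1}$ with every edge directed toward a corner; concretely, if the history of a region has a middle square in the column of an edge $\{i,j\}$, then that region satisfies $0<x_i-x_j<1$ and is the tail of the two edges crossing $x_i-x_j=0$ and $x_i-x_j=1$, so it has outdegree at least $2$. Therefore a vertex is a sink if and only if its history uses no middle square, i.e., it is a corner of the cube, and there are $2^{n-1}$ of those.

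For the last clause I would mimic the proof of Theorem \ref{thm:maximal-sink-history-without-middle}. If $v$ is a sink, its history selects the top value $i$ or the bottom value $j$ in each of the $n-1$ columns and no middle square, so its outcome is a multiset of size $n-1$ and its Pak-Stanley label ${\bf p}$ has coordinate sum $n-1$. A tree on $n$ vertices has $m=n-1$ edges, so by Proposition \ref{prop:maximal-superstable-conf} every superstable configuration on $T_\bullet$ with $n-1$ chips is maximal; since ${\bf p}$ is a $T_\bullet$-parking function (Theorem \ref{thm:every-G-parking-functions-occurs-as-label}), hence a superstable configuration on $T_\bullet$ by Theorem \ref{thm:superstable-configurations-are-G-parking-functions}, and has $n-1$ chips, it is maximal.

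The only genuine content is the first paragraph: the identification of regions with nonempty slab-assignments, and the claim that all $3^{n-1}$ of them are nonempty, are exactly the places where acyclicity of $T$ is essential (around a cycle the relevant hyperplanes are affinely dependent, some slab-assignments are empty, and the count drops below $3^{n-1}$). Everything after that is bookkeeping: the sink characterization is read straight off the cube structure of Section \ref{sec:SAD-of-trees}, and the maximality assertion is Proposition \ref{prop:maximal-superstable-conf} applied verbatim. So I expect no serious obstacle, only care in writing the rooted-tree propagation argument cleanly.
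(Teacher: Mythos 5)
Your argument is correct and follows the same route the paper takes: it leaves this proposition without a formal proof, relying on the observation in Sections \ref{sec:SAD-of-trees} and \ref{sec:five} that acyclicity makes the three slab choices per edge independent, that the resulting digraph is the subdivided cube with edges directed toward the corners, and that a sink's label has coordinate sum $n-1=m$ and is therefore maximal by Proposition \ref{prop:maximal-superstable-conf}. Your rooted-tree propagation argument in the first paragraph is a welcome rigorous justification of the independence claim that the paper only asserts, but it is the same underlying idea rather than a different approach.
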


\begin{proposition}
Let $T$ be a tree. The multiplicities of the Pak-Stanley labels in the Shi adjacency digraph of $T$ are independent of the vertex labelling up to permutation. 
\end{proposition}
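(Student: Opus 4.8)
The plan is to prove the statement directly inside the $T$-Three Rows Game, by exhibiting for any two labellings of the same tree an explicit correspondence between Pak-Stanley labels that matches up multiplicities. Fix a tree $T=(V,E)$ on $n$ vertices, and for each bijection $L\colon V\to\{0,1,\dots,n-1\}$ let $\mu_L$ denote the multiplicity function on the Pak-Stanley labels of $\mathscr{S}(T)$ computed with the vertices labelled by $L$. By the correspondence between histories of the $T$-Three Rows Game and regions (and between outcomes and Pak-Stanley labels), $\mu_L({\bf p})$ is the number of histories whose outcome is the multiset containing $p_i$ copies of $i$ for each $i$.

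The first step is to record a history in a way that does not reference $L$. A history chooses, for each column --- equivalently, for each edge $e=\{a,b\}\in E$ --- the top, middle, or bottom square; the middle square contributes nothing to the outcome, while the other two contribute the two endpoint labels of $e$. Which endpoint is ``top'' and which is ``bottom'' depends on $L$, but the unordered pair of contributable labels, $\{L(a),L(b)\}$, does not. So I would encode a history as a function $f\colon E\to V\cup\{\bot\}$ with $f(e)\in e\cup\{\bot\}$, where $f(e)=\bot$ means the middle square and $f(e)=a$ (resp.\ $b$) means the square bearing the label $L(a)$ (resp.\ $L(b)$). For every $L$ this is a bijection from the $3^{n-1}$ histories onto the $3^{n-1}$ such functions, and the outcome of $f$ has $|f^{-1}(L^{-1}(i))|$ copies of $i$, so
\[
\mu_L({\bf p}) \;=\; \#\bigl\{\, f\colon E\to V\cup\{\bot\}\ \big|\ f(e)\in e\cup\{\bot\},\ |f^{-1}(v)| = p_{L(v)}\ \text{for all } v\in V \,\bigr\}.
\]
(Equivalently, $\mu_L({\bf p})$ counts the ways to choose a subforest $E'\subseteq E$ together with an orientation of $E'$ so that each $v$ has in-degree $p_{L(v)}$.)

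Then I would take two labellings $L,L'$ and set $\tau=L'\circ L^{-1}$, a permutation of $\{0,\dots,n-1\}$ acting on label vectors by $(\tau\cdot{\bf p})_j=p_{\tau^{-1}(j)}$. Since $p_{L(v)}=(\tau\cdot{\bf p})_{L'(v)}$ for every $v$, the displayed formula applied to $L'$ and $\tau\cdot{\bf p}$ counts exactly the same set of functions $f$ as the formula applied to $L$ and ${\bf p}$; hence $\mu_{L'}(\tau\cdot{\bf p})=\mu_L({\bf p})$. Thus the list of Pak-Stanley labels-with-multiplicities for $L'$ is obtained from that for $L$ by permuting coordinates by $\tau$, which is precisely the assertion. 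The only step with genuine content is the labelling-free re-encoding in the second paragraph --- one must check that interchanging the top and bottom squares of a single column (the operation a relabelling performs on an edge whose endpoints change relative order) is a multiplicity-preserving bijection on histories, so that it is invisible to the count. Everything after that is bookkeeping with $\tau$, and in particular the argument recovers, as in the path case, the fact that permuting or individually flipping the columns of the board never changes the multiset of outcome multiplicities.
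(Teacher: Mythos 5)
Your proof is correct and follows the same route the paper takes (and largely leaves implicit): it passes through the correspondence between histories of the $T$-Three Rows Game and regions, observes that a relabelling only permutes the numbers on the board and possibly swaps the top and bottom squares within a column, and concludes that multiplicities are preserved up to the induced permutation of coordinates. Your labelling-free encoding of histories as functions $f\colon E\to V\cup\{\bot\}$ makes rigorous what the paper merely asserts after its discussion of the history--region analogy, so it is a careful elaboration of the same argument rather than a genuinely different one.
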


\subsection{Uniqueness of Maximal Outcomes} In this section, we generalize one of our main results of the previous sections, uniqueness of maximal outcomes for path graphs, to all trees.
\begin{theorem}[Uniqueness of Maximal Outcomes] Any outcome ${\bf o}$ of the $T$-Three Rows Game on $n$ columns with $|{\bf o}|=n$ has multiplicity $\mu({\bf o})=1$.
\end{theorem}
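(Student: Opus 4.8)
The plan is to induct on $n$, the number of columns of the board (equivalently, on the number of edges of $T$, noting that $T$ has one more vertex than it has edges). The first step is a simple but crucial observation: each column of the $T$-Three Rows Game contributes exactly one element to the outcome unless its middle square is selected, in which case it contributes nothing. Since there are $n$ columns, the hypothesis $|{\bf o}| = n$ forces every column to have selected a non-middle square. Consequently, a history realizing ${\bf o}$ is precisely a choice, for each edge of $T$, of one of its two endpoints, and the theorem amounts to showing that the outcome multiset determines this choice edge-by-edge.

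The base case $n = 1$ is immediate: $T$ is a single edge $\{i,j\}$, the outcome is either $\{i\}$ or $\{j\}$, and in both cases the unique column's choice is forced, so $\mu({\bf o}) = 1$. For the inductive step, suppose the game has $n \geq 2$ columns, and pick a leaf $\ell$ of $T$ with its unique incident edge $e = \{\ell, p\}$. The column of $e$ is the only column whose squares are labelled with $\ell$, so: if $\ell$ appears in ${\bf o}$, the column of $e$ must have chosen the square labelled $\ell$; and if $\ell$ does not appear in ${\bf o}$, then by the observation above the column of $e$ must have chosen the square labelled $p$. Either way the choice in column $e$ is forced; call the resulting contribution $a \in \{\ell, p\}$.

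Next I would delete $\ell$ and $e$ from $T$ to obtain a tree $T'$ with one fewer vertex and one fewer edge; its board is that of $T$ with the column of $e$ removed. Any history of the $T$-game realizing ${\bf o}$ restricts to a history of the $T'$-game realizing ${\bf o}' := {\bf o} \setminus \{a\}$, and $|{\bf o}'| = n - 1$, so the inductive hypothesis applies to $T'$ and shows this restricted history is unique. Together with the forced choice in column $e$, the entire history of the $T$-game is uniquely determined, giving $\mu({\bf o}) = 1$.

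The step I expect to require the most care is not any single computation but the bookkeeping around leaf deletion: one must check that removing a leaf of $T$ corresponds exactly to deleting one column of the board, that the forced value $a$ is removed from ${\bf o}$ with the correct multiplicity, and that the reduced outcome still meets the cardinality bound so the inductive hypothesis is available. This leaf induction is what replaces the argument used for path graphs — there, one could simply observe that two weakly increasing sequences of squares with the same outcome must coincide, but for an arbitrary tree the column labels need not be monotone, so that shortcut is unavailable and the recursion on leaves does the work instead.
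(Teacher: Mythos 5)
Your proof is correct, and it takes a genuinely different route from the paper's. The paper first relabels the vertices of $T$ via a depth-first search so that each column after the first contains exactly one previously-seen vertex and one new vertex, then defines an $L^1$ ``distance'' between outcomes and argues that any nonempty sequence of top/bottom swaps transforming one middle-free history into another must strictly increase this distance (the first swap displaces the new vertex of its column by $2$, and later swaps cannot undo this), so distinct histories yield distinct outcomes. You instead peel off a leaf $\ell$ with incident edge $e=\{\ell,p\}$: since $\ell$ labels only the column of $e$ and the cardinality hypothesis rules out middle squares, the choice in that column is forced by whether $\ell$ appears in ${\bf o}$, and deleting $\ell$ reduces to the $(n-1)$-column game on $T-\ell$ with outcome ${\bf o}\setminus\{a\}$, to which induction applies. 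Your bookkeeping checks out: the board of $T-\ell$ is exactly the board of $T$ with one column removed, the reduced outcome has cardinality $n-1$, and the restriction of any history realizing ${\bf o}$ realizes the reduced outcome. The two arguments exploit the same structural fact about trees (a vertex ordering in which each edge introduces one fresh vertex is essentially a reversed leaf-peeling order), but your induction is more local and arguably more airtight, since the paper's claim that subsequent swaps ``do not decrease the distance'' is left somewhat informal; on the other hand, the paper's swap-based viewpoint is the one that foreshadows its later uniqueness proof for maximal outcomes of the $G$-Three Rows Game on arbitrary graphs, where leaves need not exist and the obstruction becomes a cycle of unsatisfiable inequalities.
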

\begin{proof}
Notice that we can label the vertices of $T$ (using a depth-first search, for example) in such a way that each column in the $T$-Three Rows Game after the first contains exactly one vertex that has already appeared on the board, and exactly one vertex that has not appeared on the board.

Given an outcome ${\bf o}$, let $m_{{\bf o}}(i)$ denote the number of occurrences of $i$ in ${\bf o}$. Then, define \textit{distance} between outcomes as follows:
\begin{equation*}
    d({\bf o},{\bf o}') = \sum_{i = 0}^{n-1} |m_{\bf o}(i) - m_{{\bf o}'}(i)|.
\end{equation*}
Now, suppose that ${\bf h}$ and ${\bf h'}$ are two distinct histories of the $T$-Three Rows Game such that $|{\bf o}({\bf h})| = |{\bf o}({\bf h}')| = n$. Then, the history ${\bf h'}$ can be reached from ${\bf h}$ through a nonempty series of ``swaps" (that is, choosing the bottom square instead of the top, or the top square instead of the bottom) at some indices $i_1<\dots < i_k$. 

Yet the first swap $i_1$ increases the distance between ${\bf o}({\bf h})$ and ${\bf o}({\bf h'})$ by 2, and every subsequent swap $i_j$ does not decrease the distance between ${\bf o}({\bf h})$ and ${\bf o}({\bf h'})$. To see why, notice that each column of the board contains at least one new vertex $v'$ and one possibly old vertex $v$. Since this is the first time $v'$ appears, the swap ``displaces" $v'$, and even if it ``corrects" $v$, the distance is preserved. That is, after each swap, the distance $d({\bf o}({\bf h}),{\bf o}({\bf h}'))$ is increased either by $0$ or $2$. Thus, if ${\bf h} \neq {\bf h'}$, then $d({\bf o}({\bf h}),{\bf o}({\bf h'}))\geq 2$, and in particular ${\bf o}({\bf h}) \neq {\bf o}({\bf h'})$.
\end{proof}

\subsection{An Example: Star Graphs} In this section, we use the $T$-Three Rows Game to characterize the multiplicity of Pak-Stanley labels in the Shi arrangement for star graphs.

The star graph $S_n$ is a tree on $n+1$ vertices with a central vertex $0$ and $n$ ``spokes" $1,\dots,n$, all connected to the central vertex. Thus, the $S_n$-Three Rows Game is played on the following board: 
\begin{table}[H]
\begin{tabular}{|c|c|c|c|c|c|}
\hline
0 & 0 & 0 & $\cdots$ & 0 & 0 \\ \hline
 &  &  &  &  &  \\ \hline
1 & 2 & 3 & $\cdots$ & $n-1$ & $n$ \\ \hline
\end{tabular}
\end{table}
\begin{proposition}
Let ${\bf p}$ be a Pak-Stanley label in the $S_n$-Shi arrangement, and let $c_0$ (resp. $c_{\square}$) count the number of $0$s (resp. number of  middle $\square$s) in any history inducing the label. Then, 
\begin{equation*}
    \mu({\bf p}) = \binom{c_0+c_{\square}}{c_0}.
\end{equation*}
\end{proposition}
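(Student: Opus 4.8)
The plan is to analyze which columns of the $S_n$-Three Rows Game board are ``forced'' by the outcome ${\bf o}$ and which are free, and then count the histories by a direct combinatorial argument. First I would observe that the board has a special structure: column $j$ (for $j = 1,\dots,n$) contains a $0$ on top, a blank in the middle, and the value $j$ on the bottom. Since the value $j \geq 1$ appears in \emph{only one} column of the board, the choice in column $j$ is completely determined by whether $j \in {\bf o}$: if $j \in {\bf o}$ we must pick the bottom square, and if $j \notin {\bf o}$ we must pick either the top square (contributing a $0$) or the middle square (contributing nothing). Thus the columns split into two types: the $c_{\mathrm{bottom}}$ columns where the bottom square is forced (these are exactly the columns indexed by the distinct positive values appearing in ${\bf o}$, and since each appears exactly once in the board it appears at most once in ${\bf o}$, hence $c_{\mathrm{bottom}}$ equals the number of nonzero entries of ${\bf p}$), and the remaining $n - c_{\mathrm{bottom}}$ columns, in each of which we independently choose ``top'' or ``middle.''

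Next I would identify $c_0$ and $c_\square$ in terms of these free columns. In any history inducing ${\bf p}$, the number of $0$s in the outcome is exactly the number of free columns in which we chose ``top,'' so $c_0$ is the multiplicity of $0$ in ${\bf p}$ (equivalently $p_0$, the central coordinate), and this is the same for every history inducing ${\bf p}$ — this is why $c_0$ is well-defined from ${\bf p}$ alone, as the statement implicitly requires. Likewise $c_\square$, the number of middle squares chosen, equals the number of free columns in which we chose ``middle,'' so $c_0 + c_\square = n - c_{\mathrm{bottom}}$ is the total number of free columns, again independent of the history. Therefore choosing a history inducing ${\bf o}$ amounts to choosing, among the $c_0 + c_\square$ free columns, which $c_0$ of them get a ``top'' (the rest getting ``middle''), and there are exactly $\binom{c_0 + c_\square}{c_0}$ such choices.

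The main obstacle — really the only subtle point — is making precise and justifying the claim that $c_0$ and $c_\square$ are invariants of the label ${\bf p}$ rather than of a particular history, since the proposition's statement presupposes this by saying ``in any history inducing the label.'' This follows from the observations above: $c_0$ is forced to equal $p_0$ and $c_\square$ is forced to equal $n - (\text{number of nonzero coordinates among } p_1,\dots,p_n) - p_0$, both manifestly depending only on ${\bf p}$. Once this is pinned down, the count is immediate. I would also remark, to connect with the earlier framework, that this is the $T = S_n$ instance of the general principle behind the Pattern Multiplicity Theorem: here the entire ``tail'' of repeated $0$s and blanks behaves like one big pattern-like block, except that unlike the path case the free columns are not required to be consecutive or weakly ordered, which is precisely why a binomial coefficient replaces the simple linear factor $\ell({\bf r}) + 1$.
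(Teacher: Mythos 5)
Your proposal is correct and follows essentially the same argument as the paper: the columns indexed by nonzero values appearing in the outcome are forced, and a history is determined by choosing which $c_0$ of the remaining $c_0 + c_\square$ free columns take the top square. Your additional remark that $c_0 = p_0$ and $c_\square$ are invariants of the label (so the statement is well-posed) is a worthwhile clarification the paper leaves implicit, but the counting is identical.
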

\begin{proof}
For each nonzero element $i$ in the outcome, there is exactly one column in the $S_n$-Three Rows Games where that element appears. The row choice is fixed for those columns. Since 0 and $\square$ appear in all columns, determining a history amounts to choosing $c_0$ columns out of the remaining $c_0 + c_\square$ unfixed columns to be 0 (leaving the remaining columns to be $\square$). Hence $\mu({\bf p}) = \binom{c_0+c_{\square}}{c_0}$.
\end{proof}
\begin{proposition}
The number of distinct Pak-Stanley labels of $S_n$ is \[\sum_{S\subseteq\{1,\hdots,n\}}n+1-|S|=(n+2)\cdot2^{n-1}.\]
\end{proposition}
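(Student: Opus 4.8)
The plan is to reduce the statement to a count of distinct \emph{outcomes} of the $S_n$-Three Rows Game. By the correspondence between Pak-Stanley labels and outcomes (an outcome ${\bf o}$ determines the label whose $i$-th coordinate is the multiplicity of $i$ in ${\bf o}$, and this map is a bijection onto the set of labels), the number of distinct Pak-Stanley labels of $S_n$ equals the number of distinct outcomes. Recall that the $S_n$-board has $n$ columns; column $i$ has top square $0$, a blank middle square, and bottom square $i$. Thus a history picks, in each column, either the $0$ (top), nothing (middle), or the spoke label $i$ (bottom), and the outcome is the resulting multiset.

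The first real step is to build a bijection between outcomes and pairs $(S,j)$, where $S\subseteq\{1,\dots,n\}$ and $j\in\{0,1,\dots,n-|S|\}$. The crucial structural fact is that each spoke label $i$ appears on the board in exactly one column, namely column $i$ (this is where the star structure — every non-central vertex has degree $1$ — enters). Consequently, in any history the label $i$ is chosen at most once, so the nonzero part of any outcome is an honest \emph{set} $S\subseteq\{1,\dots,n\}$: it records precisely the columns whose bottom square was chosen. The remaining $n-|S|$ columns each contribute either a $0$ or nothing, so the number $j$ of $0$s in the outcome ranges over $\{0,1,\dots,n-|S|\}$, and every such pair $(S,j)$ is realized. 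Since $S$ is recovered as the nonzero part of ${\bf o}$ and $j$ as the number of zeros, the assignment $(S,j)\mapsto{\bf o}$ is a bijection.

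Hence the number of distinct labels is $\#\{(S,j)\}=\sum_{S\subseteq\{1,\dots,n\}}(n+1-|S|)$, which matches the middle expression in the statement. To finish, I would evaluate the sum by grouping subsets according to their size: \[\sum_{k=0}^{n}\binom{n}{k}(n+1-k)=(n+1)\sum_{k=0}^{n}\binom{n}{k}-\sum_{k=0}^{n}k\binom{n}{k}=(n+1)2^{n}-n2^{n-1}=(n+2)2^{n-1},\] using the standard identities $\sum_k\binom{n}{k}=2^n$ and $\sum_k k\binom{n}{k}=n2^{n-1}$.

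The only subtle point — and the one I would spell out carefully — is the claim that distinct pairs $(S,j)$ give genuinely distinct outcomes, i.e.\ that the nonzero part of an outcome is a set rather than a multiset; this is exactly the place where the degree-one structure of the spokes is used. Everything after that, including the evaluation of the binomial sum, is routine.
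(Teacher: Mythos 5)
Your argument is correct and follows essentially the same route as the paper: both identify an outcome with its nonzero set $S$ (using that each spoke label appears in exactly one column) together with the number of chosen $0$s, yielding $n+1-|S|$ outcomes per $S$, and then evaluate the sum via the standard binomial identities. No substantive differences.
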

\begin{proof}
Let $S_{\bf o}$ denote the set of non-zero elements of a given outcome ${\bf o}$. As in the previous proof, for any nonzero value appearing in the outcome, the corresponding column is fixed in the $S_n$-Three Rows Game. The remaining $n - |S_{\bf o}|$ columns are unfixed and can either contain $0$ or $\square$; this implies that for any ``nonzero set" $S \subseteq \{1,\dots,n\}$, there are $n+1-|S|$ distinct outcomes with said nonzero set. Therefore, the total number of distinct outcomes (i.e. distinct Pak-Stanley labels) is
\begin{align*}
    \sum_{S\subseteq\{1,\hdots,n\}}n+1-|S| &=
    (n+1)\cdot2^{n} - n\cdot2^{n-1}
    = (n+2)\cdot 2^{n-1}.
\end{align*}
\end{proof}

\section{Extending the Three Rows Game}\label{sec:six}
\subsection{Superstable Configurations on Cycle Graphs}
First, we compute the number of superstable configurations on a cycle graph. We begin with two easier determinant computations: 

\begin{proposition}
Let $M_n$ be the $n \times n$ matrix
\begin{equation*}
    \begin{bmatrix}
        3 & -1 & 0 & \cdots & 0 & 0 & 0 \\
        -1 & 3 & -1 & \cdots & 0 & 0 & 0 \\
        0 & -1 & 3 & \cdots & 0 & 0 & 0 \\
        \vdots & \vdots & \vdots & \ddots & \vdots & \vdots & \vdots \\
        0 & 0 & 0 & \cdots & 3 & -1 & 0 \\
        0 & 0 & 0 & \cdots & -1 & 3 & -1 \\
        0 & 0 & 0 & \cdots & 0 & -1 & 3
    \end{bmatrix}.
\end{equation*}
Then $\det(M_n) = F_{2n+2}$ for any positive integer $n$.
\end{proposition}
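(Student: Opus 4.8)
The plan is to prove this by induction on $n$, mirroring the determinant computations in Lemmas \ref{lem:lemma-2} and \ref{lem:lemma-3}. The first step is to extract the defining recurrence. Expanding $\det(M_n)$ by cofactors along the top row, the only nonzero entries are the $3$ in position $(1,1)$ and the $-1$ in position $(1,2)$. Deleting row $1$ and column $1$ leaves exactly $M_{n-1}$, while deleting row $1$ and column $2$ leaves an $(n-1)\times(n-1)$ matrix whose first column is $(-1,0,\dots,0)^{T}$; expanding that minor along its first column collapses it to $-\det(M_{n-2})$. Hence
\[
    \det(M_n) = 3\det(M_{n-1}) - \det(M_{n-2}).
\]

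Next I would record the base cases: $\det(M_1) = 3 = F_4$ and $\det(M_2) = 3\cdot 3 - 1 = 8 = F_6$. Assuming $\det(M_k) = F_{2k+2}$ for all $k < n$, the recurrence gives $\det(M_n) = 3F_{2n} - F_{2n-2}$. Applying the second identity of Lemma \ref{lem:identities-of-fibonacci-numbers} with $n$ replaced by $2n+2$ (so that the subscript $n-2$ becomes $2n$ and $n-4$ becomes $2n-2$) yields $F_{2n+2} = 3F_{2n} - F_{2n-2}$, and therefore $\det(M_n) = F_{2n+2}$, completing the induction.

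The proof has no genuine obstacle; the only thing to watch is the index bookkeeping, namely checking that the recurrence $\det(M_n) = 3\det(M_{n-1}) - \det(M_{n-2})$ lines up with the Fibonacci identity under the substitution sending the subscript $n$ of $F_n$ to $2n+2$, and that the two base cases agree with the claimed closed form. In fact, this recurrence is identical to the one appearing in the proofs of Lemmas \ref{lem:lemma-2} and \ref{lem:lemma-3}, so the argument is essentially ``the same induction with different initial conditions,'' and one could alternatively phrase it that way to avoid repeating the cofactor expansion.
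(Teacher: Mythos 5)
Your proposal is correct and follows essentially the same route as the paper: cofactor expansion along the top row to obtain the recurrence $\det(M_n) = 3\det(M_{n-1}) - \det(M_{n-2})$, the base cases $\det(M_1)=3=F_4$ and $\det(M_2)=8=F_6$, and the identity $F_n = 3F_{n-2}-F_{n-4}$ from Lemma \ref{lem:identities-of-fibonacci-numbers}. The only cosmetic difference is that you reduce the $(1,2)$ minor by expanding along its first column while the paper expands along its top row again; the result is identical.
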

\begin{proof}
First, notice that $\det(M_1) = 3 = F_4$ and $\det(M_2) = 8 = F_6$. Therefore, it suffices to establish that $\det(M_n) = 3\det(M_{n-1}) - \det(M_{n-2})$ by Lemma \ref{lem:identities-of-fibonacci-numbers}. By cofactor expansion along the top row twice,
\begin{align*}
    \det(M_n)&=3\det(M_{n-1}) + \begin{vmatrix}
        -1 & -1 & \cdots & 0 & 0 \\
        0 & 3 & \cdots & 0 & 0 \\
        \vdots & \vdots & \ddots & \vdots & \vdots \\
        0 & 0 & \cdots & 3 & -1 \\
        0 & 0 & \cdots & -1 & 3 \\
    \end{vmatrix} = 3\det(M_{n-1}) - \det(M_{n-2}) + \begin{vmatrix}
        0 & \cdots & 0 & 0 \\
        \vdots & \ddots & \vdots & \vdots \\
        0 & \cdots & 3 & -1 \\
        0 & \cdots & -1 & 3 \\
        \end{vmatrix}.
\end{align*}
It follows that $\det(M_n) = 3\det(M_{n-1}) - \det(M_{n-2})$ as desired.
\end{proof}

\begin{proposition}
Let $M'_n$ be the $n \times n$ matrix   
\begin{equation*}
    \begin{bmatrix}
        0 & -1 & 0 & \cdots & 0 & 0 & 0 \\
        0 & 3 & -1 & \cdots & 0 & 0 & 0 \\
        0 & -1 & 3 & \cdots & 0 & 0 & 0 \\
        \vdots & \vdots & \vdots & \ddots & \vdots & \vdots & \vdots \\
        0 & 0 & 0 & \cdots & 3 & -1 & 0 \\
        0 & 0 & 0 & \cdots & -1 & 3 & -1 \\
        -1 & 0 & 0 & \cdots & 0 & -1 & 3
    \end{bmatrix}.
\end{equation*}
That is, $M'_n$ has a -1 in the $nth$ row, first column, 3's along the diagonal in rows $2,\cdots, n$. It has $-1$'s along the superdiagonal, $-1$'s along the subdiagonal in rows $3,\cdots, n$, and 0's elsewhere. Then $\det(M'_n) = -1$ for any positive integer $n$.
\end{proposition}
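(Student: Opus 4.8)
The plan is to evaluate $\det(M'_n)$ with a single cofactor expansion along the first column. The point is that the first column of $M'_n$ is zero except for the bottom-left entry $M'_{n,1} = -1$, so the expansion collapses to one term:
\[
\det(M'_n) = (-1)\cdot(-1)^{n+1}\cdot\det(N),
\]
where $N$ is the $(n-1)\times(n-1)$ minor obtained by deleting the last row and the first column of $M'_n$.

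Next I would identify $N$ explicitly. Deleting column $1$ reindexes the remaining columns $2,\dots,n$ as columns $1,\dots,n-1$, and deleting row $n$ keeps rows $1,\dots,n-1$. Tracking entries: row $1$ of $N$ is $(-1,0,\dots,0)$, contributed by the lone superdiagonal $-1$ in row $1$ of $M'_n$; for $2 \le k \le n-1$, row $k$ of $N$ has a $3$ in position $k-1$ (the old diagonal $3$), a $-1$ in position $k$ (the old superdiagonal), and, when $k \ge 3$, a $-1$ in position $k-2$ (the old subdiagonal). Thus $N_{k,j} = 0$ whenever $j > k$, so $N$ is lower triangular; a clean way to see this without index chasing is that columns $\le k$ of $M'_n$ already vanish in rows $> k+1$, and deleting column $1$ turns this into lower-triangularity of $N$. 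The diagonal of $N$ consists precisely of the superdiagonal entries of $M'_n$, all equal to $-1$, so $\det(N) = (-1)^{n-1}$.

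Combining the two computations,
\[
\det(M'_n) = (-1)\cdot(-1)^{n+1}\cdot(-1)^{n-1} = -(-1)^{2n} = -1,
\]
as claimed. The small cases $n = 1, 2$ (where the subdiagonal band, or the minor itself, degenerates) can be verified directly and are consistent.

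I do not anticipate a genuine obstacle here: the only thing requiring care is the index bookkeeping for $N$ together with the verification that the banded structure of $M'_n$ collapses to a triangular matrix after the deletion, and the ``columns $\le k$ vanish below row $k+1$'' observation handles that cleanly.
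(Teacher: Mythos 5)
Your proof is correct. It differs mildly but genuinely from the paper's: the paper expands along the \emph{top} row (whose only nonzero entry is the $-1$ in position $(1,2)$), observes that the resulting minor is exactly $M'_{n-1}$, and concludes $\det(M'_n)=\det(M'_{n-1})=\cdots=\det(M'_1)=-1$ by induction. You instead expand along the \emph{first} column (whose only nonzero entry is the $-1$ in position $(n,1)$) and show the resulting minor $N$ is lower triangular with all diagonal entries $-1$, so everything is computed in one shot with no recursion. Your index bookkeeping checks out: after deleting column $1$, the nonzero band of row $k$ sits in new positions $k-2,k-1,k$, so $N$ is lower triangular with diagonal given by the superdiagonal of $M'_n$, and the sign computation $(-1)\cdot(-1)^{n+1}\cdot(-1)^{n-1}=-1$ is right (including the degenerate cases $n=1,2$). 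What each approach buys: the paper's recursion $\det(M'_n)=\det(M'_{n-1})$ is the shortest to state and fits the style of the neighboring lemmas (which all proceed by cofactor recurrences on Fibonacci-type determinants), while yours is self-contained, avoids induction, and makes the value $-1$ visibly independent of $n$ via the triangular structure. Either is acceptable.
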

\begin{proof}
Notice $\det(M'_1) =-1$. Furthermore, by cofactor expansion along the top row,
\begin{equation*}
    \det(M'_n) =  \begin{vmatrix}
        0 & -1 & \cdots & 0 & 0 \\
        0 & 3 & \cdots & 0 & 0\\
        \vdots & \vdots & \ddots & \vdots & \vdots \\
        0 & 0 & \cdots & 3 & -1 \\
        -1 & 0 & \cdots & -1 & 3 \\
    \end{vmatrix}= \det(M'_{n-1}).
\end{equation*}
\end{proof}

Next, we have the following final determinant calculation: 

\begin{proposition}
Let $M''_n$ be the $n \times n$ matrix 
\begin{equation*}
    \begin{bmatrix}
        3 & -1 & 0 & \cdots & 0 & 0 & -1 \\
        -1 & 3 & -1 & \cdots & 0 & 0 & 0 \\
        0 & -1 & 3 & \cdots & 0 & 0 & 0 \\
        \vdots & \vdots & \vdots & \ddots & \vdots & \vdots & \vdots \\
        0 & 0 & 0 & \cdots & 3 & -1 & 0 \\
        0 & 0 & 0 & \cdots & -1 & 3 & -1 \\
        -1 & 0 & 0 & \cdots & 0 & -1 & 3
    \end{bmatrix}.
\end{equation*}
Then $\det(M''_n) = 3F_{2n} - 2F_{2n-2} - 2$.
\end{proposition}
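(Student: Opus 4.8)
The plan is to evaluate $\det(M''_n)$ by cofactor expansion along the first row, exploiting the fact that $M''_n$ is obtained from the ordinary tridiagonal matrix $M_n$ (the first matrix of this subsection) by inserting the two ``corner'' entries $-1$ at positions $(1,n)$ and $(n,1)$. Throughout one may assume $n \ge 3$, the only case relevant to a cycle graph, which also guarantees that the minors below genuinely have the advertised shapes; the cases of small $n$ can be checked by hand.

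The first row of $M''_n$ has nonzero entries $3$, $-1$, $-1$ in positions $1$, $2$, $n$, so the expansion yields
\[
    \det(M''_n) = 3\det(A) + \det(B) + (-1)^{n}\det(C),
\]
where $A$, $B$, $C$ denote the $(n-1)\times(n-1)$ minors obtained by deleting row $1$ together with column $1$, column $2$, and column $n$ respectively, with the cofactor signs absorbed. Deleting the first row and column removes both corner entries, so $A$ is exactly the ordinary $(n-1)\times(n-1)$ tridiagonal matrix with $3$'s on the diagonal and $-1$'s off it; the proposition above computing $\det(M_n) = F_{2n+2}$ then gives $\det(A)=F_{2n}$. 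The minor $B$ coincides with $M'_{n-1}$ in every entry except its $(1,1)$ entry, which is $-1$ instead of $0$; expanding $\det(B)$ linearly in its first row writes it as $\det(M'_{n-1})$ plus the determinant of a matrix whose first row is $(-1,0,\dots,0)$, and the latter equals $-F_{2n-2}$ after one more expansion. Hence the proposition above computing $\det(M'_n) = -1$ yields $\det(B) = -1 - F_{2n-2}$.

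It remains to handle $C$. Expanding $C$ along its last row, whose only nonzero entries are the two $-1$'s in its first and last columns, reduces to two determinants: deleting the last row and last column leaves an upper-triangular matrix with $-1$'s on the diagonal (determinant $(-1)^{n-2}$), while deleting the last row and first column leaves an ordinary $(n-2)\times(n-2)$ tridiagonal matrix (determinant $F_{2n-2}$). Tracking the cofactor signs gives $\det(C) = (-1)^{n-1}(F_{2n-2}+1)$. Substituting the three values and using $(-1)^{n}(-1)^{n-1} = -1$ gives
\[
    \det(M''_n) = 3F_{2n} + (-1 - F_{2n-2}) - (F_{2n-2}+1) = 3F_{2n} - 2F_{2n-2} - 2,
\]
as claimed. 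The one delicate point in carrying this out is the sign bookkeeping across the nested cofactor expansions, together with verifying that each transformed minor really is the tridiagonal or triangular matrix asserted; once that is settled the two corner contributions collapse precisely into the $-2F_{2n-2}-2$ correction to the ``tridiagonal value'' $3F_{2n}$.
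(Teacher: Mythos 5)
Your proof is correct and takes essentially the same route as the paper: cofactor expansion along the first row, identifying the three minors with the previously computed tridiagonal determinant $F_{2n}$, a perturbation of $M'_{n-1}$ worth $-1-F_{2n-2}$, and a corner minor worth $(-1)^{n-1}(F_{2n-2}+1)$. The only differences are cosmetic (you use multilinearity on the first row of $B$ where the paper re-expands, and you expand $C$ along its last row where the paper uses its first column), and all intermediate values agree with the paper's.
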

\begin{proof}
First, by cofactor expansion along the top row, 
\begin{equation*}
    \det(M''_n)=3\det(M_{n-1}) + \begin{vmatrix}
        -1 & -1 & 0 & \cdots & 0 & 0 \\
        0 & 3 & -1 & \cdots & 0 & 0 \\
        0 & -1 & 3 & \cdots & 0 & 0 \\
        \vdots & \vdots & \vdots & \ddots & \vdots & \vdots \\
        0 & 0 & 0 & \cdots & 3 & -1 \\
        -1 & 0 & 0 & \cdots & -1 & 3 \\
    \end{vmatrix} + (-1)^n\begin{vmatrix}
        -1 & 3 & -1 & \cdots & 0 & 0 \\
        0 & -1 & 3 & \cdots & 0 & 0 \\
        0 & 0 & -1 & \cdots & 0 & 0 \\
        \vdots & \vdots & \vdots & \ddots & \vdots & \vdots \\
        0 & 0 & 0 & \cdots & -1 & 3 \\
        -1 & 0 & 0 & \cdots & 0 & -1 \\
    \end{vmatrix}.
\end{equation*}
For the second $(n - 1) \times (n - 1)$ determinant, cofactor expansion along the top row yields
\begin{equation*}
    -\det(M_{n-2}) + \det(M'_{n-2}) = -F_{2n-2} - 1.
\end{equation*}
For the third $(n - 1) \times (n - 1)$ determinant, cofactor expansion along the first column yields
\begin{align*}
    -\begin{vmatrix}
        -1 & 3 & \cdots & 0 & 0 & 0 \\
        0 & -1 & \cdots & 0 & 0 & 0 \\
        \vdots & \vdots & \ddots & \vdots & \vdots & \vdots \\
        0 & 0 & \cdots & -1 & 3 & -1 \\
        0 & 0 & \cdots & 0 & -1 & 3 \\
        0 & 0 & \cdots & 0 & 0 & -1
    \end{vmatrix} + (-1)^{n-1}\det(M_{n-2}).
\end{align*}
Now, the remaining $(n-2) \times (n-2)$ determinant is an upper triangular matrix, so its determinant can be computed as $(-1)^{n-2}$. Combining the previous together, we get that the third $(n-1) \times (n-1)$ determinant is $(-1)^{n-1} + (-1)^{n-1}\det(M_{n-2}) = (-1)^{n-1}F_{2n-2} + (-1)^{n-1}$. Therefore, 
\begin{equation*}
    \det(M''_n) = 3\det(M_{n-1})  - F_{2n-2} - 1 + (-1)^n((-1)^{n-1}F_{2n-2} + (-1)^{n-1}) = 3F_{2n} - 2F_{2n-2} - 2.
\end{equation*}
\end{proof}

\begin{theorem}
The number of superstable configurations of $(C_n)_\bullet$ is $3F_{2n} - 2F_{2n-2} - 2 = F_{2n+2} - F_{2n-2} - 2$.
\end{theorem}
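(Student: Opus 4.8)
The plan is to reduce the count to a determinant and then invoke the determinant computation already carried out in the preceding proposition. By Theorem 4.2.2 of \cite{Klivans}, the number of superstable configurations of $(C_n)_\bullet$ equals $\det\!\left(\Delta_q\big((C_n)_\bullet\big)\right)$, where $q$ is the sink of $(C_n)_\bullet$. So the first step is to identify this reduced Laplacian explicitly. In $(C_n)_\bullet$, every original vertex of $C_n$ has degree $3$ (its two cycle-neighbors plus the sink) while the sink $q$ has degree $n$. Deleting the row and column corresponding to $q$ leaves an $n \times n$ matrix with $3$'s on the diagonal and a $-1$ in position $(i,j)$ precisely when $i$ and $j$ are adjacent in $C_n$. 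Labelling the cycle so that $i$ is adjacent to $i\pm 1$ with indices taken modulo $n$, this is exactly the matrix $M''_n$ of the preceding proposition: $-1$'s along the sub- and superdiagonal, together with the two corner entries $-1$ in positions $(1,n)$ and $(n,1)$ that record the wrap-around edge. Any other labelling of $C_n$ produces a matrix obtained from $M''_n$ by a simultaneous row-and-column permutation, hence with the same determinant.

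The second step is then immediate: the preceding proposition gives $\det(M''_n) = 3F_{2n} - 2F_{2n-2} - 2$, which is the first expression in the statement.

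For the claimed second expression, it remains to check the identity $3F_{2n} - 2F_{2n-2} - 2 = F_{2n+2} - F_{2n-2} - 2$, i.e.\ $3F_{2n} - F_{2n-2} = F_{2n+2}$. This is precisely the relation $F_m = 3F_{m-2} - F_{m-4}$ of Lemma \ref{lem:identities-of-fibonacci-numbers}, applied with $m = 2n+2$, and completes the argument.

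I do not expect a serious obstacle: the heavy lifting, namely the cofactor-expansion evaluation of $\det(M''_n)$, is already contained in the previous proposition, so what remains is bookkeeping. The two points deserving a word of care are (i) confirming that the reduced Laplacian of $(C_n)_\bullet$ really is $M''_n$ — in particular tracking the corner entries that come from the cycle's closing edge — and (ii) noting that, although the $G$-Shi arrangement and even the reduced Laplacian can in general depend on the vertex labelling, all cyclic labellings of $C_n$ yield permutation-conjugate Laplacians, so the superstable count (unlike the Pak-Stanley multiplicities) is genuinely labelling-independent.
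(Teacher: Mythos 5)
Your proposal is correct and follows the same route as the paper: both reduce the count to $\det\bigl(\Delta_q((C_n)_\bullet)\bigr)$ via Theorem 4.2.2 of Klivans, identify this reduced Laplacian with $M''_n$, invoke the preceding determinant computation, and obtain the second expression from the identity $F_{2n+2} = 3F_{2n} - F_{2n-2}$ of Lemma \ref{lem:identities-of-fibonacci-numbers}. Your write-up is in fact more explicit than the paper's about why the reduced Laplacian is $M''_n$ and about labelling-independence, but the substance is identical.
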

\begin{proof}
In the same vein as Theorem \ref{thm:superstable-configurations-of-P_n}, this follows from the above determinant calculation, and Theorem 4.2.2 of Klivans' book in light of the definition of the reduced Laplacian \cite{Klivans}. The equality follows from the second identity in Lemma \ref{lem:identities-of-fibonacci-numbers}.
\end{proof}

\begin{lemma}[Superstability Criterion for Cycle Graphs]\label{lem:Carls-criterion-for-cycle-graphs}
Let ${\bf a}\in \{0,1,2\}^n$. 
Consider the entries of ${\bf a}$ arranged on a circle so that entries $a_1$ and $a_n$ are adjacent. Suppose ${\bf a}$ has 2-free blocks $\pi_1,\pi_2,\cdots,\pi_m$ so that $\pi_1=\pi_m$. Then ${\bf a}$ is a superstable configuration on $(C_n)_{\bullet}$ if and only if there is a $0$ in each $2$-free block ${\bf \pi}_i$.
\end{lemma}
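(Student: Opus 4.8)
The plan is to adapt the proof of the Superstability Criterion for path graphs (Lemma \ref{lem:Carls-criterion}) to the cyclic setting. As before, the argument splits into necessity and sufficiency. For necessity, suppose some $2$-free block $\pi_i$ contains only $1$s. Viewing the entries of ${\bf a}$ arranged on the circle $C_n$, the block $\pi_i$ is a contiguous arc bordered on each side by an entry equal to $2$ (here we use the hypothesis $\pi_1 = \pi_m$, i.e.\ the first and last blocks wrap around and are identified, so every block genuinely has two bordering $2$s unless ${\bf a}$ has at most one $2$ in total; this degenerate case must be handled separately, and is where the ``circular'' structure first matters). Then firing at the set $S$ consisting of $\pi_i$ together with its two bordering $2$-vertices is a legal cluster-fire: each vertex of $\pi_i$ has outdegree $0$ into the complement of $S$ (its $C_n$-neighbors are all in $S$) and loses no chips, each bordering $2$-vertex has exactly one neighbor outside $S$ so loses one chip and drops to $1 \geq 0$, hence ${\bf a}$ is not superstable.

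For sufficiency, I would again run Dhar's Burning Algorithm (Theorem \ref{def:dhars-burning-algorithm-detects-superstability}) on $(C_n)_\bullet$, starting the fire at the sink $q$, which is adjacent to every vertex of $C_n$. The sink immediately sends fire along one edge into each vertex $v$; a vertex with $0$ chips cannot stop this single incident burning edge and so catches fire right away. Thus every vertex holding a $0$ burns in the first round. Now I claim the fire propagates through every $2$-free block: since each such block $\pi_i$ contains a $0$ by hypothesis, that $0$-vertex burns, and the fire then spreads along the arc. A vertex with $1$ chip has two cycle-neighbors plus the sink; once one cycle-neighbor and the sink are both burning it faces two burning incident edges but has only one firefighter, so it ignites — this lets the fire sweep outward along $\pi_i$ in both directions until it reaches the bordering $2$-vertices. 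Finally, once an entire $2$-free block is ablaze, each bordering $2$-vertex sees fire coming from the sink and from its $2$-free-block neighbor (and possibly from the other side), giving it at least $2$ (in fact $3$, counting the sink) burning incident edges against only $2$ firefighters, so it too ignites. Iterating, all vertices burn, so by Dhar's criterion ${\bf a}$ is superstable.

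The main obstacle, and the only real point of departure from the path-graph proof, is the bookkeeping around the wrap-around identification $\pi_1 = \pi_m$ and the low-$2$-count edge cases. If ${\bf a}$ contains no $2$ at all, then there is a single $2$-free block comprising the entire cycle, and the criterion reduces to: ${\bf a}$ is superstable iff it contains a $0$; I should check this directly (if ${\bf a}$ is all $1$s on $C_n$, firing the whole vertex set is legal since each vertex only sends a chip to the sink, confirming non-superstability; if some entry is $0$, Dhar's fire from the sink ignites it and then sweeps the cycle as above). If ${\bf a}$ contains exactly one $2$, there is again a single $2$-free block (an arc, not the whole circle), and the bordering ``$2$-vertex'' is the same vertex on both ends, so the necessity cluster-fire at $\pi_i$ plus that one $2$-vertex must be rechecked: that vertex has outdegree $0$ into the complement once $\pi_i$ and itself are removed, so it actually loses nothing, and the cluster-fire is still legal. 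Once these boundary cases are dispatched, the generic case where there are at least two $2$s runs exactly parallel to Lemma \ref{lem:Carls-criterion}, with ``to the left or right of a $2$'' reinterpreted cyclically. I would also remark that the alternative formula $F_{2n+2} - F_{2n-2} - 2$ from the preceding theorem serves as a numerical sanity check: one can count the ${\bf a} \in \{0,1,2\}^n$ satisfying the cyclic criterion and verify it matches.
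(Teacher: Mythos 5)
Your proposal is correct in substance and takes exactly the route the paper intends: the paper's entire proof of this lemma is the single sentence that it ``follows just as in Lemma~\ref{lem:Carls-criterion}'' (cluster-fire for necessity, Dhar's Burning Algorithm for sufficiency), and your write-up, including the explicit treatment of the zero-$2$ and one-$2$ degenerate cases, is a faithful elaboration of that. Two bookkeeping slips should be repaired, both coming from forgetting that in $(C_n)_\bullet$ every cycle vertex is also adjacent to the sink $q$, which never lies in the fired set $S$. In the necessity argument, each vertex of $\pi_i$ has $\outdeg_S$ equal to $1$ (the edge to $q$), not $0$, so it loses one chip and lands exactly at $0$; each bordering $2$-vertex has $\outdeg_S$ equal to $2$ in general, not $1$, so it loses two chips and lands at $0$ (and in your one-$2$ case the lone $2$ loses one chip to the sink, not nothing). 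The cluster-fires are still legal, but only with equality, so these counts matter. In the sufficiency argument, a $2$-vertex facing two burning incident edges with two firefighters does \emph{not} ignite under the paper's statement of Dhar's algorithm; it needs all three incident edges burning. This does hold, but the reason deserves a sentence: under the hypothesis every $2$-free block contains a $0$ and hence is nonempty, so no two $2$s are adjacent, both cycle-neighbors of every $2$-vertex lie in blocks that fully burn, and the third edge (to $q$) burns from the start.
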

\begin{proof}
The proof follows just as in Lemma \ref{lem:Carls-criterion}.
\end{proof}

\subsection{The Cyclical Three Rows Game}
Next, we define the Cyclical Three Rows Game to analyze multiplicity in the Pak-Stanley labels for the cycle graph $C_n$. The Cyclical Three Rows is similar to the $T$-Three Rows Game with only a minor modification: the concept of \textit{legal} and \textit{illegal} histories.

\begin{definition}[Cyclical Three Rows Game]\label{def:cyclical-three-rows-game}
The \textit{Cyclical Three Rows Game} is played on the following board: 
\begin{center}
    \begin{tabular}{|l|l|l|l|l|l|l|}
    \hline
    0 & 1 & 2 & $\cdots$ & $n-3$ & $n-2$ & 0 \\ \hline
     &  &  &  &  &  &  \\ \hline
    1 & 2 & 3 & $\cdots$ & $n-2$ & $n-1$ & $n-1$ \\ \hline
    \end{tabular}
\end{center}
As in the original Three Rows Game, exactly one square is chosen from each column of the board, with outcomes and histories being defined analogously. Furthermore, every choice except the final one is called a \textit{path choice}, and the final choice is called the \textit{cycle choice}.
\end{definition}

To understand the idea of legal and illegal histories we will consider an example.

\begin{example}
Consider the following history of the Cyclical Three Rows Game on $3$ columns: 
\begin{table}[H]
    \centering
    \begin{tabular}{|l|l|l|}
    \hline
    \rowcolor[HTML]{C0C0C0} 
    0 & 1 & \cellcolor[HTML]{FFFFFF}0 \\ \hline
    \rowcolor[HTML]{FFFFFF} 
     &  &  \\ \hline
    \rowcolor[HTML]{FFFFFF} 
    1 & 2 & \cellcolor[HTML]{C0C0C0}2 \\ \hline
    \end{tabular}
    \label{tab:example-history-of-the-cyclical-three-rows-game}
\end{table}
If we convert this history into the sequence of corresponding inequalities, we get the following: 
\begin{equation*}
    x_0 - x_1 < 0 \qquad x_1 - x_2 < 0 \qquad x_0 - x_2 > 1.
\end{equation*}
However, the first two inequalities yield $x_0 - x_2 < 0$, a contradiction to the last inequality. Therefore, this history determines a region which does not exist. This is the core idea of illegal histories: because the analogy between histories and regions breaks down, we define a particular class of histories, ``legal histories", which \textit{do} preserve this analogy.
\end{example}

\begin{definition}[Legal History]\label{def:legal-history}
A \textit{legal history (of the Cyclical Three Rows Game)} is a history such that, when the history is converted to a list of inequalities defining a region using the following rules, the region is nonempty (that is, the system of inequalities can be solved): 
\begin{enumerate}
    \item A choice of the form \begin{tabular}{|l|}
    \hline
    \rowcolor[HTML]{C0C0C0} 
    $i$ \\ \hline
    \rowcolor[HTML]{FFFFFF} 
     \\ \hline
    \rowcolor[HTML]{FFFFFF} 
    $j$ \\ \hline
    \end{tabular} is converted to the inequality $x_i - x_j < 0$.
    \item A choice of the form \begin{tabular}{|l|}
    \hline
    \rowcolor[HTML]{FFFFFF} 
    $i$ \\ \hline
    \rowcolor[HTML]{C0C0C0} 
     \\ \hline
    \rowcolor[HTML]{FFFFFF} 
    $j$ \\ \hline
    \end{tabular} is converted to the inequality $0 < x_i - x_j < 1$.
    \item A choice of the form \begin{tabular}{|l|}
    \hline
    \rowcolor[HTML]{FFFFFF} 
    $i$ \\ \hline
    \rowcolor[HTML]{FFFFFF} 
     \\ \hline
    \rowcolor[HTML]{C0C0C0} 
    $j$ \\ \hline
    \end{tabular} is converted to the inequality $x_i - x_j > 1$.
\end{enumerate}
An \textit{illegal history} is a history of the Cyclical Three Rows Game which is not legal.
\end{definition}

Next, let us provide an alternate classification of legal histories.

\begin{proposition}[Classification of Legal Histories]\label{prop:classification-of-legal-histories}
Let ``$T$" denote the top choice, ``$M$" denote the middle choice, and ``$B$" denote the bottom choice, so that a history of the Cyclical Three Rows Game on $n$ columns is a tuple ${\bf h} = \{T,M,B\}^n$. Then, a history ${\bf h} \in \{T,M,B\}^n$ of the Cyclical Three Rows Game is a legal history if and only if it follows these three rules: 
\begin{enumerate}
    \item If all of the path choices are $T$, the cycle choice must be $T$.
    \item If all of the path choices are $T$ except for one $M$, the cycle choice cannot be $B$.
    \item If all of the path choices are $M$ or $B$, the cycle choice must be $B$ unless \textit{every} path choice is $M$ (in which case the cycle choice can also be $M$).
\end{enumerate}
\end{proposition}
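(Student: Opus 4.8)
The plan is to turn each history of the Cyclical Three Rows Game into the system of inequalities it names and to decide exactly when that system has a solution. For each path column $i$ (the edge $\{i-1,i\}$) introduce the difference variable $d_i = x_{i-1}-x_i$; a top, middle, or bottom choice in that column forces $d_i$ into $(-\infty,0)$, $(0,1)$, or $(1,\infty)$, respectively. The cycle column (the edge $\{0,n-1\}$) instead forces $D := x_0 - x_{n-1} = \sum_{i=1}^{n-1} d_i$ into one of those same three intervals. Since the coordinates $x_k$ can be reconstructed from the differences $d_i$, the history is legal if and only if one can pick each $d_i$ inside its prescribed interval so that the telescoping sum $D$ lands inside the interval prescribed by the cycle choice.

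The key step is to identify the set of attainable values of $D$. As the $d_i$ range independently over their open intervals, $D$ ranges over the Minkowski sum of those intervals, which is again an open interval $(L,U)$ whose endpoints are the sums of the corresponding endpoints (an endpoint being $\pm\infty$ as soon as one summand's endpoint is). Writing $t$, $m$, $b$ for the numbers of top, middle, and bottom path choices, with $t+m+b = n-1$, this gives $L = -\infty$ when $t \geq 1$ and $L = b$ when $t = 0$, and $U = +\infty$ when $b \geq 1$ and $U = m$ when $b = 0$. Legality then reduces to the elementary question of whether $(L,U)$ meets the cycle choice's interval, one of $(-\infty,0)$, $(0,1)$, $(1,\infty)$.

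The proof concludes with a four-way case split on $(t,b)$. If $t,b \geq 1$ then $(L,U) = \R$ and every cycle choice is legal, consistent with Rules 1--3 since none of them applies (there is a top path choice and a bottom path choice). If $t \geq 1$, $b = 0$ then $(L,U) = (-\infty, m)$, so the top cycle choice is always legal, the middle one is legal iff $m \geq 1$, and the bottom one iff $m \geq 2$; the sub-cases $m = 0$ and $m = 1$ are precisely Rules 1 and 2, and for $m \geq 2$ no rule restricts the cycle choice. If $t = 0$, $b \geq 1$ then $(L,U) = (b,\infty)$ with $b \geq 1$, so only the bottom cycle choice is legal, which is the main clause of Rule 3. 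Finally if $t = b = 0$ (all path choices middle) then $(L,U) = (0, n-1)$, which meets $(0,1)$ and, because $n \geq 3$, also $(1,\infty)$, but misses $(-\infty,0)$; so the cycle choice may be middle or bottom but not top, which is exactly the exceptional clause of Rule 3. Assembling the four cases shows the legal histories are exactly those obeying Rules 1--3.

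I expect the only delicate point to be the bookkeeping of open-versus-half-open intersections and the $\pm\infty$ conventions in the Minkowski-sum step, together with noting that the hypothesis $n \geq 3$ (automatic, since $C_n$ is simple only then) is exactly what lets $(0,n-1)$ extend past $1$ in the all-middle case; this is what forces the parenthetical ``can also be $M$'' in Rule 3 rather than ``must be $M$''. The remaining verifications are routine.
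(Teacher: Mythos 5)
Your proof is correct, and it rests on the same underlying idea as the paper's: the path choices constrain $x_0-x_{n-1}$ and nothing else, so legality is just a question of whether the cycle choice's interval is reachable. Where you differ is in execution. The paper argues by casework on which of $T,M,B$ occur among the path choices, justifying each case with informal observations (``by summing the series of inequalities \dots'', ``notice that if there are at least two $M$s, then the cycle choice can be $T$, $M$, or $B$''), and in particular simply asserts that once both $T$ and $B$ appear among the path choices no restriction on the cycle choice survives. You instead compute the exact attainable set of $D=\sum_i d_i$ as the Minkowski sum of the prescribed open intervals, getting the single closed-form interval $(L,U)$ with $L=-\infty$ or $b$ and $U=+\infty$ or $m$, and then read off every case by intersecting with $(-\infty,0)$, $(0,1)$, or $(1,\infty)$. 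This buys you a uniform proof of both directions at once: the paper's unproved ``notice that'' claims (e.g.\ that two middle path choices already make a bottom cycle choice feasible, or that a single $B$ among otherwise-$M$ path choices forces the cycle choice to be $B$) become immediate consequences of the interval arithmetic, and the role of $n\geq 3$ in the all-middle exceptional clause of Rule 3 is made explicit, which the paper glosses over. The one point worth stating carefully in a final write-up is the Minkowski-sum identity itself -- that the sum of finitely many nonempty open intervals is the open interval whose endpoints are the sums of the endpoints (with the obvious $\pm\infty$ conventions) -- together with the observation that any prescribed tuple $(d_1,\dots,d_{n-1})$ is realized by actual coordinates $x_0,\dots,x_{n-1}$; both are easy, and you have flagged them.
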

\begin{proof}
There are a few insights which make our casework simpler.
Firstly, notice that the path choices are independent, so it suffices to determine when particular cycle choices are impossible. Secondly, if $T$ and $B$ both occur as path choices, then the remaining path choices cannot be used to create an inequality that restricts the cycle choice. This leaves us with the following cases: 
\begin{enumerate}
    \item All of the path choices are $T$.
    \item All of the path choices are $T$ or $M$.
    \item All of the path choices are $M$.
    \item All of the path choices are $M$ or $B$.
    \item All of the path choices are $B$.
\end{enumerate}
In the first case, notice that by summing the series of inequalities given by all of our path choices being $T$, namely $x_0 - x_1 < 0, x_1 - x_2 < 0, \dots, x_{n-2} - x_{n-1} < 0$, we achieve the inequality $x_0 - x_{n-1} < 0$. This forces the cycle choice to be $T$. In the second case, notice that if there are at least two $M$s, then the cycle choice can be $T$, $M$, or $B$. Therefore, the only interesting case is that of a single $M$; in this case, the cycle choice can be $T$ or $M$, but not $B$. In the third case, notice that the cycle choice can be $M$ or $B$ but not $T$. In the fourth and fifth cases, notice that if all the path choices are $M$ or $B$, and there is at least one $B$, then the cycle choice \textit{must} be $B$. Reorganizing gives us the list of three rules in the statement of the theorem, as desired.
\end{proof}

This classification allows us to deduce the following facts: 

\begin{corollary}
There are $2^n - 2$ sinks in the Shi adjacency graph of $C_n$.
\end{corollary}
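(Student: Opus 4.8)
The plan is to reduce the count to the combinatorics of the Cyclical Three Rows Game: I claim that the sinks of $\Gamma\mathscr{S}(C_n)$ are exactly the legal histories that use no middle square, and that there are $2^n-2$ of these. Granting the identification, the count is immediate from the Classification of Legal Histories (Proposition \ref{prop:classification-of-legal-histories}): a legal history with no middle square is a legal element of $\{\mathrm{top},\mathrm{bottom}\}^n$, and among the $2^n$ such histories the only two that violate the three rules are the one with every path choice $\mathrm{top}$ and cycle choice $\mathrm{bottom}$ (rule (1)) and the one with every path choice $\mathrm{bottom}$ and cycle choice $\mathrm{top}$ (rule (3)); rule (2) cannot apply since it involves a middle square. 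So there are $2^n-2$ legal no-middle histories, hence $2^n-2$ sinks.

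It remains to justify the identification, which runs parallel to the tree case in Section \ref{sec:five}. For the easy direction, note that $C_n$ has $n$ vertices and $n$ edges, so by Proposition \ref{prop:maximal-superstable-conf} together with Theorem \ref{thm:superstable-configurations-are-G-parking-functions} the maximal $(C_n)_\bullet$-parking functions are precisely the Pak-Stanley labels of coordinate sum $n$. Now the coordinate sum of the label of a region equals the number of hyperplanes separating it from $R_0$, and for a history of the Cyclical Three Rows Game this is exactly the number of non-middle columns, since a top or bottom choice in the column of edge $\{i,j\}$ separates the region from $R_0$ by exactly one of $x_i-x_j=0$, $x_i-x_j=1$, whereas a middle choice separates it by neither (the base region lies strictly between the two parallel hyperplanes of every edge). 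Hence a legal history with no middle square has label of coordinate sum $n$, so its label is maximal, and the region is a sink, because any child would have label of coordinate sum $n+1$.

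For the other direction, suppose a legal history ${\bf h}$ has a middle square in the column of edge $\{i,j\}$, giving a region $R$ with $0<x_i-x_j<1$. Flipping that square to top or to bottom yields two candidate histories; a short case check against Proposition \ref{prop:classification-of-legal-histories} shows at least one of them is again legal. The region named by the legal flip shares a facet with $R$ lying on $x_i-x_j=0$ or $x_i-x_j=1$, and since $R_0$ lies on $R$'s side of that hyperplane (both being between the parallel pair), this facet gives $R$ an outgoing edge, so $R$ is not a sink.

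The only step I expect to require genuine care is the claim in the last paragraph that the legal flip really names a region adjacent to $R$ --- equivalently, that a region lying strictly between the two parallel hyperplanes of an edge actually has a facet on one of them. Its combinatorial ingredient (that a flip stays legal) is the one-line case analysis indicated above, and is the only genuinely new input compared with trees, where every flip is automatically legal. The geometric ingredient can be dispatched by deleting the relevant hyperplane from $\mathscr{S}(C_n)$: this merges $R$ with exactly one neighbor into a convex region through whose interior the deleted hyperplane passes, so that hyperplane cuts out an honest $(n-1)$-dimensional face shared by $R$ and that neighbor.
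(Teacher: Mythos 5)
Your proof is correct and follows essentially the same route as the paper: identify the sinks with the legal histories of the Cyclical Three Rows Game that avoid the middle row (by showing any middle square in a legal history can be flipped to a legal top or bottom choice, so the region has a child), then count $2^n-2$ such histories by ruling out $T\cdots TB$ and $B\cdots BT$ via the Classification of Legal Histories. You supply more detail than the paper does for the identification step --- in particular the maximality argument for the easy direction and the geometric check that the middle region really has a facet on one of the parallel hyperplanes --- but the underlying argument is the same.
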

\begin{proof}
The Classification of Legal Histories demonstrates that whenever there is an $M$ in a legal history, it can be transformed into a $T$ or a $B$ (and sometimes both) to result in another legal history. In other words, the sinks correspond to the legal histories without $M$s. Now, there are $2^n$ such histories, and the Classification of Legal Histories demonstrates that there are two illegal ones: $T \cdots TB$ and $B \cdots BT$. This leaves $2^n - 2$ such legal histories.
\end{proof}

\begin{corollary}
There are $3^n - 2^n - n$ legal histories of the Cyclical Three Rows Game on $n$ columns; in particular, there are $3^n - 2^n - n$ regions in the $C_n$-Shi arrangement.
\end{corollary}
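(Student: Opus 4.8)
The plan is to count legal histories of the Cyclical Three Rows Game on $n$ columns directly, using the Classification of Legal Histories (Proposition \ref{prop:classification-of-legal-histories}), and then invoke the analogy between legal histories and regions of the $C_n$-Shi arrangement. Since there are $3^n$ histories total, it suffices to count the illegal ones and show there are exactly $2^n + n$ of them; the statement $3^n - 2^n - n$ will then follow immediately, and the claim about regions follows from the fact that legal histories are in bijection with nonempty regions of the $C_n$-Shi arrangement.

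First I would recall that the path choices are independent, so an illegal history is determined by a choice of the first $n-1$ entries together with a cycle choice that is forbidden by the three rules. Reading off Proposition \ref{prop:classification-of-legal-histories}, the forbidden configurations are: (1) all path choices $T$ and cycle choice $M$ or $B$ (that is $2$ illegal histories, $T\cdots TM$ and $T\cdots TB$); (2) exactly one path choice equal to $M$ and the rest $T$, with cycle choice $B$ (there are $n-1$ positions for the $M$, so $n-1$ illegal histories); (3) all path choices in $\{M,B\}$ with at least one $B$, and cycle choice equal to $T$ or $M$ (the path part gives $2^{n-1}-1$ options, and the cycle choice gives $2$ options, for $2(2^{n-1}-1) = 2^n - 2$ illegal histories); and additionally the case where all path choices are $M$ and the cycle choice is $T$, which is $1$ more illegal history. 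I would double-check that these four families are pairwise disjoint — they are, since their path-choice patterns are mutually exclusive except that ``all $M$'s'' could overlap with family (3)'s ``all in $\{M,B\}$'', which is why I separate out the pure-$M$ path pattern and only count cycle choice $T$ there (cycle choice $M$ being legal by rule 3).

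Summing these counts gives $2 + (n-1) + (2^n - 2) + 1 = 2^n + n$ illegal histories, hence $3^n - (2^n + n) = 3^n - 2^n - n$ legal histories, as desired. The identification of legal histories with regions of $\mathscr{S}(C_n)$ is exactly the content of the definition of legal history (Definition \ref{def:legal-history}) combined with the usual analogy between region-choices and inequalities: each region is cut out by choosing, for each edge $\{i,j\}$ with $i<j$, one of $x_i - x_j < 0$, $0 < x_i - x_j < 1$, or $x_i - x_j > 1$, and such a system is realizable precisely when it is a legal history.

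The main obstacle is purely bookkeeping: making sure the partition of illegal histories into cases is genuinely a partition, with no double-counting at the boundaries between rules (especially the interaction between rule 2's ``one $M$, rest $T$'' and rule 1's ``all $T$'', and between rule 3's ``all in $\{M,B\}$ with a $B$'' and the pure-$M$ path pattern). Once the casework is organized so that the path-choice patterns are disjoint, the arithmetic is routine. A minor secondary point worth a sentence is confirming that the analogy ``legal history $\leftrightarrow$ nonempty region'' is a bijection and not merely a surjection — but this is immediate, since distinct sign-vectors of the defining inequalities give distinct regions.
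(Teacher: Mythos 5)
Your proposal is correct and follows essentially the same route as the paper: both count the illegal histories via the Classification of Legal Histories and subtract from $3^n$, arriving at $2 + (n-1) + (2^n-1) = 2^n + n$ illegal histories. Your only deviation is cosmetic — you split the third rule's count $2^{n-1}\cdot 2 - 1$ into the sub-cases ``at least one $B$'' and ``all $M$ with cycle choice $T$'' and explicitly verify the families are disjoint, which the paper leaves implicit.
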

\begin{proof}
This follows from simple counting. There are $3^n$ histories of the Cyclical Three Rows Game. The first rule yields $2$ illegal histories, the second rule yields $n-1$ illegal histories, and the third rule yields $2^{n-1} \cdot 2 - 1$ illegal histories for a total of $2 + (n - 1) + (2^n - 1) = 2^n + n$ illegal histories.
\end{proof}

Finally, our classification of the illegal histories in the Cyclical Three Rows Game, in conjunction with Theorem \ref{thm:pattern-repetition-theorem}, can be used to give a method for computing the set of regions $\mathscr{R}$ which give rise to a given Pak-Stanley label for $C_n$, as follows: 
\begin{enumerate}
    \item Given a Pak-Stanley label ${\bf p}$ for $C_n$, convert it to an outcome ${\bf o}$ of the Cyclical Three Rows Game on $n$ columns.
    \item Using the Pattern Multiplicity Theorem, enumerate the histories of the Three Rows Game on $n-1$ columns inducing ${\bf o}$. Check which of these path choices are legal when a cycle choice $M$ is appended. Convert the ones which pass back into regions and add them to $\mathscr{R}$.
    \item If ${\bf o}$ has a $0$, create a new outcome ${\bf o}'$ which is identical to ${\bf o}$ with one less $0$. Then, using the Pattern Multiplicity Theorem, enumerate the histories of the Three Rows Game on $n-1$ columns inducing ${\bf o}'$. Check which of these path choices are legal when a cycle choice $T$ is appended. Convert the ones which pass back into regions and add them to $\mathscr{R}$.
    \item If ${\bf o}$ has an $n-1$, create a new outcome ${\bf o}''$ which is identical to ${\bf o}$ with one less $n-1$. Then, using the Pattern Multiplicity Theorem, enumerate the histories of the Three Rows Game on $n-1$ columns inducing ${\bf o}''$. Check which of these path choices are legal when a cycle choice $B$ is appended. Convert the ones which pass back into regions and add them to $\mathscr{R}$.
\end{enumerate}
The final list $\mathscr{R}$ will contain precisely those regions of $C_n$ with the Pak-Stanley label ${\bf p}$. This procedure is not quite as simple as it is for the Three Rows Game, but it can be done with only a slight slowdown, and is certainly much better than a naive search.

\subsection{Defining the $G$-Three Rows Game}
The $G$-Three Rows Game combines the lessons from studying the $T$-Three Rows Game and the Cyclical Three Rows Game into a single concept which subsumes both of them. 

\begin{definition}[$G$-Three Rows Game]
Suppose that $G$ is a graph with $n$ vertices and $m$ edges. Then, the $G$-Three Rows Game is played on $m$ columns and $3$ rows. Each column corresponds to an edge $\{i,j\}$; the top square has an $i$, the middle square is blank, and the bottom square has a $j$. Subsequent definitions, including \textit{(legal and illegal) histories}, \textit{outcomes}, \textit{multiplicity}, and so on are defined analogously.
\end{definition}


Immediately, there are a few open problems which we have yet to answer (either with an efficient algorithm or a formula): 

\begin{problem}
Given a graph $G$, how many legal histories of the $G$-Three Rows Game that do not use the middle square are there?
\end{problem}

\begin{problem}
Given a graph $G$, how many legal histories of the $G$-Three Rows Game are there?
\end{problem}

\begin{problem}
Given a legal history ${\bf h}$ of the $G$-Three Rows Game, what is $\mu({\bf h})$?
\end{problem}

The last two questions correspond to the problems of (1) determining the number of regions in the $G$-Shi arrangement and (2) determining the number of times a given $G_\bullet$-parking functions appears as a Pak-Stanley label of said regions, respectively. However, despite the fact that these fundamental questions (among others) are still open, we are able to prove an interesting fact about the $G$-Three Rows Game: every maximal outcome appears uniquely. 

\subsection{Uniqueness of Maximal Outcomes}
In this section, we give a proof of uniqueness for maximal outcomes in the $G$-Three Rows Game, analogous to the proofs of uniqueness of maximal outcomes in the $T$-Three Rows Game and ordinary Three Rows Game given earlier. This theorem generalizes both of those facts.

\begin{proposition}
Suppose that $G$ is a graph with $m$ edges and ${\bf h}$ is a history of the $G$-Three Rows Game. Then ${\bf o}$ is a maximal outcome (i.e. corresponds to a maximal $G_\bullet$-parking function ${\bf p}$) if and only if ${\bf h}$ does not ever choose the middle square.
\end{proposition}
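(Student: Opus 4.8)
The plan is to reduce the statement to a chip count: the only invariant distinguishing a maximal $G_\bullet$-parking function from a non-maximal one is its number of chips, and that number is directly controlled by how many middle squares the history uses.

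First I would record the translation between histories and labels for the $G$-Three Rows Game, exactly as in the path case. A column corresponds to an edge $\{i,j\}$ with $i<j$; choosing the top square means crossing the hyperplane $x_i-x_j=0$, hence incrementing coordinate $i$ of the label; choosing the bottom square means crossing $x_i-x_j=1$, hence incrementing coordinate $j$; choosing the middle square increments no coordinate. Summing over all $m$ columns, the coordinate sum of the label ${\bf p}$ attached to ${\bf o}={\bf o}({\bf h})$ equals $m-k$, where $k$ is the number of columns in which ${\bf h}$ picks the middle square. In particular $\sum_i p_i \le m$, with equality precisely when ${\bf h}$ never picks a middle square.

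Next I would invoke the chip-count characterization of maximal superstable configurations. By Theorem \ref{thm:superstable-configurations-are-G-parking-functions} the $G_\bullet$-parking functions are exactly the superstable configurations of $G_\bullet$, and the coordinate sum of such a vector is its number of chips. By Proposition \ref{prop:maximal-superstable-conf}, applied to $G$ (which has $m$ edges), a superstable configuration on $G_\bullet$ is maximal if and only if it has exactly $m$ chips. Hence a $G_\bullet$-parking function ${\bf p}$ is maximal if and only if $\sum_i p_i = m$. Combining this with the previous paragraph, ${\bf o}$ is a maximal outcome if and only if $\sum_i p_i=m$ if and only if ${\bf h}$ uses no middle square.

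The $\Rightarrow$ direction is unconditional: if ${\bf o}$ corresponds to a maximal $G_\bullet$-parking function then $\sum_i p_i=m$, forcing $k=0$. For $\Leftarrow$ one must first know that the outcome is genuinely a $G_\bullet$-parking function before concluding maximality; this is automatic for \emph{legal} histories (by Definition \ref{def:pak-stanley-algorithm} and Theorem \ref{thm:every-G-parking-functions-occurs-as-label}, legal histories correspond to regions of the $G$-Shi arrangement and hence carry $G_\bullet$-parking function labels), which is the only relevant case, but can fail for illegal ones — e.g. the all-$T$-then-$B$ history on $C_n$ uses no middle square yet its outcome is the all-ones label, which is not a parking function. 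So the argument should be run for legal ${\bf h}$; modulo this bookkeeping there is no real obstacle, since the entire proof consists of the coordinate-sum identity together with Proposition \ref{prop:maximal-superstable-conf}.
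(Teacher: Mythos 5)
Your argument is essentially the paper's: both reduce maximality to a chip count via Proposition \ref{prop:maximal-superstable-conf} and Theorem \ref{thm:superstable-configurations-are-G-parking-functions}, observing that the coordinate sum of the label (equivalently $|{\bf o}|$) equals $m$ minus the number of middle squares chosen, so maximality holds exactly when no middle square is used. Your added caveat that the $\Leftarrow$ direction should be run for legal histories (since an illegal middle-free history need not yield a $G_\bullet$-parking function at all) is a legitimate point of care that the paper's one-line proof passes over silently.
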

\begin{proof}
By Proposition \ref{prop:maximal-superstable-conf} and Theorem \ref{thm:superstable-configurations-are-G-parking-functions}, ${\bf p}$ is maximal if and only if $|{\bf o}| = m$ if and only if we choose a square with a number (as opposed to a middle square, which has no number) in each column of the $G$-Three Rows Game.
\end{proof}

\begin{theorem}
Let $G$ be a graph and ${\bf o}$ a maximal outcome of the $G$-Three Rows Game. Then $\mu({\bf o}) = 1$.
\end{theorem}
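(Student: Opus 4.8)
The plan is to translate legal histories of the $G$-Three Rows Game that avoid the middle row into acyclic orientations of $G$, and then to observe that an acyclic orientation is determined by its out-degree sequence.

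By the proposition preceding the statement, every history inducing a maximal outcome ${\bf o}$ avoids the middle square in every column, so it is enough to show that at most one \emph{legal} such history induces ${\bf o}$. A middle-free history assigns to the column of each edge $\{i,j\}$ (with $i<j$) one of the inequalities $x_i - x_j < 0$ or $x_i - x_j > 1$, and in either case forces a strict inequality between $x_i$ and $x_j$. Orient each edge from its smaller-valued endpoint to its larger-valued endpoint; for a legal history this orientation is well defined on every edge and is automatically acyclic, since it is induced by a real-valued function on the vertices. Conversely, any acyclic orientation $D$ arises this way: fix a topological order of $D$, set $x_v = C\cdot\operatorname{rank}(v)$ with $C>1$, and read off top or bottom in each column according to whether the source endpoint of that edge is the smaller- or larger-indexed vertex. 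This yields a bijection between legal middle-free histories and acyclic orientations of $G$, and under it the number chosen in a column equals the source (tail) of the corresponding oriented edge. Hence the outcome of such a history, as a multiset, records each vertex $v$ with multiplicity $\operatorname{outdeg}_D(v)$ --- equivalently, the associated maximal $G_\bullet$-parking function has $v$-coordinate $\operatorname{outdeg}_D(v)$. So the legal histories inducing ${\bf o}$ are in bijection with the acyclic orientations of $G$ whose out-degree sequence is ${\bf o}$.

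It remains to prove the key lemma: if $D_1$ and $D_2$ are acyclic orientations of a graph with the same out-degree sequence, then $D_1 = D_2$. Let $F$ be the set of edges oriented oppositely by $D_1$ and $D_2$, and suppose $F \neq \varnothing$. Edges outside $F$ contribute equally to out-degrees in $D_1$ and $D_2$, so at every vertex $v$ the $D_1$-out-degree of $v$ within $F$ equals the $D_2$-out-degree of $v$ within $F$; since the edges of $F$ are reversed in $D_2$, this latter quantity is the $D_1$-in-degree of $v$ within $F$. Thus $D_1$ restricted to $F$ is balanced (in-degree equals out-degree at each vertex), hence contains a directed cycle, contradicting the acyclicity of $D_1$. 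So $F = \varnothing$ and $D_1 = D_2$. Combining with the previous paragraph, at most one legal history induces ${\bf o}$; since ${\bf o}$ is an outcome it is induced by at least one, giving $\mu({\bf o}) = 1$.

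I expect the main obstacle to be identifying the right invariant rather than the computation: the distance/swap argument used for trees does not transfer directly, since once $G$ has a cycle a column of the board can involve two previously seen vertices and a single swap need not move the outcome in a controlled direction. The acyclic-orientation viewpoint, together with the balanced-subdigraph argument above, is exactly what accommodates cycles, and it simultaneously recovers the earlier path- and tree-graph versions of the statement.
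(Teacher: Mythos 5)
Your proof is correct, but it is packaged differently from the paper's. The paper argues directly on histories: starting from one column where $\mathbf{h}$ and $\mathbf{h}'$ disagree, it chases a chain of forced ``swaps'' $a_1 \to a_2 \to a_3 \to \cdots$ (each swap overcounts some vertex in the outcome, forcing the next swap) until some vertex repeats, producing an unsolvable cycle of inequalities $x_{a_i} > \cdots > x_{a_i}$ and hence showing $\mathbf{h}'$ is illegal. You instead set up the bijection between legal middle-free histories and acyclic orientations of $G$, observe that the outcome is exactly the out-degree sequence, and reduce to the standard lemma that an acyclic orientation is determined by its out-degrees, which you prove by showing the set $F$ of disagreeing edges is balanced and therefore, if nonempty, contains a directed cycle. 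The two arguments find the same contradiction --- a directed cycle among the reversed edges --- but yours locates it by a global degree count while the paper's traces it out edge by edge; your local walk is implicit in the balancedness argument. What your framing buys is a cleaner structural statement (maximal outcomes $\leftrightarrow$ acyclic orientations via out-degree sequences) that connects to known results on maximal $G$-parking functions and makes the uniqueness conceptually transparent; what the paper's version buys is brevity and the fact that it needs no auxiliary notion beyond the swaps already used in the tree case. One small remark: your converse direction (that every acyclic orientation is realized by a legal history) is not needed for uniqueness --- injectivity of the history-to-orientation map plus your key lemma already suffices --- though it is true and worth knowing.
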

\begin{proof}
Suppose ${\bf h}$ is a legal history of the $G$-Three Rows Game with maximal outcome ${\bf o}$, and ${\bf h}'$ is another history with ${\bf h}' \sim {\bf h}$. Since ${\bf o}$ is maximal, both ${\bf h}$ and ${\bf h}'$ only use the top and bottom rows, so ${\bf h}$ can be transformed into ${\bf h}'$ by a sequence of ``swaps" from a top choice to a bottom choice and vice versa. Since ${\bf h} \neq {\bf h}'$, there is at least one swap, say $a_1 \to a_2$. Then the region $R$ corresponding to ${\bf h}'$ satisfies $x_{a_1} > x_{a_2}$. Yet now $a_2$ appears too many times in ${\bf o}({\bf h}')$, so there must be another swap $a_2 \to a_3$ (so $R$ satisfies $x_{a_2} > x_{a_3}$). Now $a_3$ appears too many times in ${\bf o}({\bf h}')$, so there must be another swap $a_3 \to a_4$, and so on. Since there are only finitely many possible swaps, $a_i = a_j$ for some $i < j$. But then $R$ satisfies the unsolvable sequence of inequalities $x_{a_i} > x_{a_{i+1}} > \cdots > x_{a_j} = x_{a_i}$, so $R$ is empty and ${\bf h}'$ is illegal, as desired.
\end{proof}

\begin{corollary}
Let $G$ be a graph and ${\bf p}$ a maximal $G_\bullet$-parking function. Then $\mu({\bf p}) = 1$.
\end{corollary}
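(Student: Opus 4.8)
The plan is to obtain this statement as an immediate translation of the preceding Theorem (uniqueness of maximal outcomes in the $G$-Three Rows Game) into the language of Pak-Stanley labels, via the dictionary between the $G$-Three Rows Game and the $G$-Shi arrangement. First I would recall that this dictionary is multiplicity-preserving: legal histories of the $G$-Three Rows Game correspond to regions of $\mathscr{S}(G)$, outcomes correspond to Pak-Stanley labels (equivalently, to $G_\bullet$-parking functions), and under these correspondences the collection of regions carrying a fixed label ${\bf p}$ matches the collection of legal histories inducing the associated outcome ${\bf o}$. Consequently $\mu({\bf p}) = \mu({\bf o})$.

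Next I would invoke the preceding Proposition, which characterizes maximality: a $G_\bullet$-parking function ${\bf p}$ is maximal precisely when its associated outcome ${\bf o}$ is maximal, i.e. when $|{\bf o}| = m$, equivalently when no history inducing ${\bf o}$ ever selects a middle square. Thus, starting from a maximal ${\bf p}$, the associated outcome ${\bf o}$ is maximal. Applying the preceding Theorem then gives $\mu({\bf o}) = 1$, and combining the two facts yields $\mu({\bf p}) = \mu({\bf o}) = 1$, as claimed.

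There is no real obstacle here; the one point to state carefully (rather than prove anew) is that the region-history correspondence is a genuine bijection, so that distinct regions with the same Pak-Stanley label cannot collapse onto the same legal history and conversely. This is built into the construction of the $G$-Three Rows Game, generalizing the analogous bijections already established for the Three Rows Game on path graphs and the $T$-Three Rows Game on trees, so the corollary requires no additional argument beyond citing the Theorem and Proposition that precede it.
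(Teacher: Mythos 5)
Your proposal is correct and matches the paper's intent exactly: the paper states this corollary with no written proof, treating it as an immediate consequence of the preceding theorem on maximal outcomes together with the proposition identifying maximal $G_\bullet$-parking functions with middle-square-free (hence maximal-outcome) histories, via the multiplicity-preserving dictionary between legal histories and regions. Your write-up simply makes that implicit chain of citations explicit.
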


However, this raises a question which is particular to our new, generalized, situation. For it is still clear that if $v$ is labelled with a maximal $G_\bullet$-parking function, then $v$ is a sink of the Shi adjacency digraph, but the converse is no longer clear. The existence and prevalence of illegal histories means that it is not clear that we can always ``eliminate any occurrence of blank middle squares"; as a result, it is not clear that every sink of the Shi adjacency digraph of $G$ is labelled with a maximal $G_\bullet$-parking function. This is our final open problem: 

\begin{problem}
If $G$ is an arbitrary graph, is every sink of the Shi adjacency digraph of $G$ labelled with a maximal $G_\bullet$-parking function?
\end{problem}

\section{Conclusion}
There is still much to discover about the nature of multiplicities in the Pak-Stanley labels of $G$-Shi arrangements. In particular, a better understanding of the ways that cycles interact could lead to solutions to the open problems discussed in Section 6.3. Even in the acyclic case, there is much room to study different classes of trees or find a fast algorithms for playing the $T$-Three Rows Game for all trees. Nonetheless, our work provides the first step towards understanding the complicated relationship between $G_\bullet$-parking functions, superstable configurations, and regions in the $G$-Shi arrangement provides by the Pak-Stanley algorithm, and we hope that it encourages further interest in the topic.

\section*{Acknowledgements}
This material is based upon work supported by the National Science Foundation under Grant No. DMS-1929284 while the authors were in residence at the Institute for Computational and Experimental Research in Mathematics (ICERM) in Providence, RI, during the Summer@ICERM program.
The authors would like to thank the Summer@ICERM program for bringing us together to work on this project and ICERM for hosting us as we completed our research. We also thank Pamela E. Harris and Susanna Fishel for their mentorship and guidance.

\printbibliography

@book{Klivans,
author = {Caroline J. Klivans},
title = {{The Mathematics of Chip-Firing}},
year = {2018},
publisher = {Chapman and Hall/CRC}
}

@article{Hopkins-Perkinson,
author = {Hopkins, Sam, and Perkinson, David},
title = {{Bigraphical Arrangements
}},
year = {2018},
publisher = {American Mathematical Society},
url = {https://www.ams.org/journals/tran/2016-368-01/S0002-9947-2015-06341-7/S0002-9947-2015-06341-7.pdf},
journal = {Transactions of the AMS}
}

@article {Stanley,
	Title = {Hyperplane arrangements, interval orders, and trees},
	Author = {Stanley, R.P.},
	DOI = {10.1073/pnas.93.6.2620},
	Number = {6},
	Volume = {93},
	Month = {March},
	Year = {1996},
	Journal = {Proceedings of the NAS},
	ISSN = {0027-8424},
	Pages = {2620—2625},
	Abstract = {A hyperplane arrangement is a finite set of hyperplanes in a real affine space. An especially important arrangement is the braid arrangement, which is the set of all hyperplanes xi - xj = 1, 1 &lt;/= i &lt; j &lt;/= n, in Rn. Some combinatorial properties of certain deformations of the braid arrangement are surveyed. In particular, there are unexpected connections with the theory of interval orders and with the enumeration of trees. For instance, the number of labeled interval orders that can be obtained from n intervals I1,..., In of generic lengths is counted. There is also discussed an arrangement due to N. Linial whose number of regions is the number of alternating (or intransitive) trees, as defined by Gelfand, Graev, and Postnikov [Gelfand, I. M., Graev, M. I., and Postnikov, A. (1995), preprint]. Finally, a refinement is given, related to counting labeled trees by number of inversions, of a result of Shi [Shi, J.-Y. (1986), Lecture Notes in Mathematics, no. 1179, Springer-Verlag] that a certain deformation of the braid arrangement has (n + 1)n-1 regions.},
	URL = {https://europepmc.org/articles/PMC39847},
}

@article{ATHANASIADIS,
title = {A simple bijection for the regions of the Shi arrangement of hyperplanes},
journal = {Discrete Mathematics},
volume = {204},
number = {1},
pages = {27-39},
year = {1999},
note = {Selected papers in honor of Henry W. Gould},
issn = {0012-365X},
doi = {https://doi.org/10.1016/S0012-365X(98)00365-3},
url = {https://www.sciencedirect.com/science/article/pii/S0012365X98003653},
author = {Christos A. Athanasiadis and Svante Linusson},
keywords = {Hyperplane arrangement, Region Shi arrangement, Parking functions},
abstract = {The Shi arrangement Ln is the arrangement of affine hyperplanes in Rn of the form xi − xj = 0 or 1, for 1 ⩽ i < j ⩽ n. It dissects Rn into (n + 1)n−1 regions, as was first proved by Shi. We give a simple bijective proof of this result. Our bijection generalizes easily to any subarrangement of Ln containing the hyperplanes xi − xj = 0 and to the extended Shi arrangements. It also implies the fact that the number of regions of Ln which are relatively bounded is (n − 1)n−1.}
}

@unpublished{GShiDefinition,
  author = {Art Duval and Caroline Klivans and Jeremy Martin},
  title  = {The G-Shi arrangement, and its relation to
G-parking functions},
  month  = {January},
  year   = {2011},
  url    = {http://www.math.utep.edu/Faculty/duval/papers/nola.pdf}
}

@incollection {ShiSurvey_Fishel,
    AUTHOR = {Fishel, Susanna},
     TITLE = {A survey of the {S}hi arrangement},
 BOOKTITLE = {Recent trends in algebraic combinatorics},
    SERIES = {Assoc. Women Math. Ser.},
    VOLUME = {16},
     PAGES = {75--113},
 PUBLISHER = {Springer, Cham},
      YEAR = {2019},
   MRCLASS = {05E15 (05A17 52C35)},
  MRNUMBER = {3969572},
MRREVIEWER = {Ricardo Mamede},
       DOI = {10.1007/978-3-030-05141-9\_3},
       URL = {https://doi.org/10.1007/978-3-030-05141-9_3},
}
\appendix
\section{Computational Tools for Analyzing Shi Adjacency Graphs}\label{appendix:g_shi_code}

\subsection{Method Descriptions}

The \lstinline{G_Shi} class takes in a Graph object and uses it to generate the Shi adjacency digraph and Pak-Stanley labels. In this section, we will describe the functionality of some of the primary methods.

\begin{itemize}
    \item \lstinline{make_adjacency_graph()}

    Returns the undirected Shi adjacency graph. 
    
    This function creates a list of the visited and unvisited regions in the $G$-Shi arrangement. It then iterates through the unvisited regions list, and checks which visited regions the unvisited region is adjacent to. Each pair of adjacent regions is stored in a list. A graph is created with the list of all regions as vertices, and the list of adjacent regions as edges. 

    Example:
\begin{lstlisting}
G = graphs.CompleteGraph(3)
Q = G_Shi(G)
adjacency_graph = Q.make_adjacency_graph()
adjacency_graph.show(spring=True, vertex_labels=False)
\end{lstlisting}
    Output:
    \begin{figure}[H]
        \centering
        \includegraphics[width=0.3\textwidth]{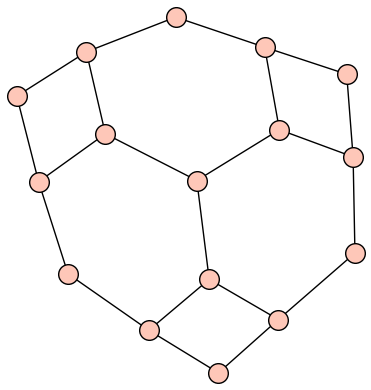}
    \end{figure}
    
    \item \lstinline{make_adjacency_digraph()}

    Returns the Shi adjacency digraph.

    This function iterates through the edges of the undirected Shi adjacency graph and creates a list of directed edges. For each undirected edge, it directs the new edge depending on the relative position of the base region. It creates and returns a digraph using the original vertices and a new list of directed edges. 
    
    Example:
\begin{lstlisting}
G = graphs.CompleteGraph(3)
Q = G_Shi(G)
adjacency_digraph = Q.make_adjacency_digraph()
adjacency_digraph.show(spring=True, vertex_labels=False)
\end{lstlisting}
    Output:
    \begin{figure}[H]
        \centering
        \includegraphics[width=0.3\textwidth]{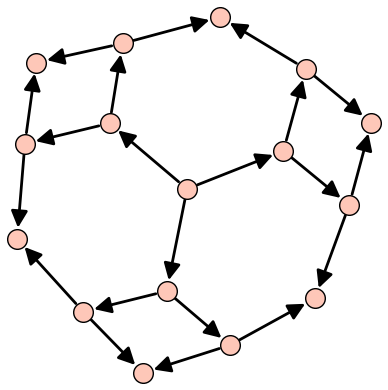}
    \end{figure}
    
    \item \lstinline{assign_edge_labels()}
    
    Returns a dictionary with the edges and edge labels.
    
    This helper method iterates through the edges of the Shi adjacency digraph and assigns a label to each edge. If the edge corresponds to the hyperplane $x_i-x_j=0$, the edge is labelled $i$. If the edge corresponds to the hyperplane $x_i-x_j=1$, the edge is labelled $j$. 
    
    \item \lstinline{make_pak_stanley_labels()}
    
    Returns a dictionary with the regions and Pak-Stanley labels.
    
    This helper method iterates through the vertices of the Shi adjacency digraph and assigns a label to each vertex using the Pak-Stanley algorithm.

    \item \lstinline{make_pak_stanley_graph_of_regions()}
    
    Returns the Shi adjacency digraph with Pak-Stanley labels on the vertices. 
    
    This function creates the Shi adjacency digraph and assigns a label to each edge in the graph. To avoid issues with repeated labels, it uses the inner class \lstinline{AdvancedVertex} to replace the existing vertices. 

    Example:
\begin{lstlisting}
G = graphs.CompleteGraph(3)
Q = G_Shi(G)
pak_stanley_graph = Q.make_pak_stanley_graph_of_regions()
pak_stanley_graph.show(spring=True)
\end{lstlisting}
    Output:
    \begin{figure}[H]
        \centering
        \includegraphics[width=0.3\textwidth]{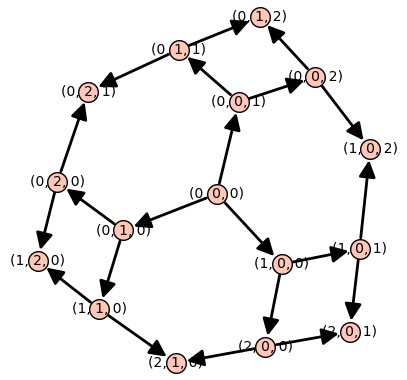}
    \end{figure}
    
    \item \lstinline{count_labels()}
    
    Returns a dictionary with the number of occurrences of each Pak-Stanley label.
    
    This function iterates through the list of Pak-Stanley labels and adds one to the count for each occurrence of a given label. 
    
    \textbf{Example:}
\begin{lstlisting}
G = graphs.CompleteGraph(3)
Q = G_Shi(G)
print(Q.count_labels())
\end{lstlisting}
    \textbf{Output:}
    
    \lstinline!{(0, 0, 0): 1, (0, 0, 1): 1, (0, 1, 0): 1, (1, 0, 0): 1, (1, 1, 0): 1, (0, 2, 0): 1, (2, 1, 0): 1, (1, 2, 0): 1, (0, 0, 2): 1, (0, 1, 1): 1, (0, 2, 1): 1, (0, 1, 2): 1, (1, 0, 2): 1, (2, 0, 0): 1, (1, 0, 1): 1, (2, 0, 1): 1}!

\end{itemize}

\subsection{Code}
Displayed below is our code for the \lstinline{G_Shi} class. 

\begin{lstlisting}
class G_Shi:

    def init(self, graph):
        """Initializes an instance of the class for a given graph."""
        self.arrangement = hyperplane_arrangements.G_Shi(graph)
        self.regions = self.arrangement.regions()
        self.n = len(self.regions[0].vertices()[0])
        self.base_region = self.find_base_region()
        self.graph_of_regions = self.make_adjacency_graph()
        self.digraph_of_regions = self.make_adjacency_digraph()
        self.pak_stanley_edge_labels = self.assign_edge_labels()
        self.pak_stanley_labels = self.make_pak_stanley_labels()
        self.pak_stanley_graph_of_regions = self.make_pak_stanley_graph_of_regions()
        self.label_count = self.count_labels()

    def is_base_region(self, region):
        """Checks if this region is the base region."""
        return region.contains(((self.n - i) / self.n for i in range(self.n)))

    def find_base_region(self):
        """Uses is_base_region to find and return the base region."""
        for region in self.regions:
            if self.is_base_region(region):
                return region
        return False

    def share_face(self, region1, region2):
        """Checks if two regions share a face."""
        if self.arrangement.distance_between_regions(region1, region2) == 1:
            return True
        return False

    def make_adjacency_graph(self):
        """Creates the undirected Shi adjacency graph."""
        unvisited_regions = list(self.regions)
        unvisited_regions.remove(self.base_region)
        vertices = [self.base_region]
        edges = []
        # Iterate through all of the regions in the arrangement
        while unvisited_regions:
            for region in unvisited_regions:
                for vertex in vertices:
                    # Check if two regions are adjacent
                    if self.share_face(region, vertex):
                        if region not in vertices:
                            unvisited_regions.remove(region)
                            vertices.append(region)
                        edges.append((vertex, region))
        return Graph([vertices, edges])

    def boundary_inequality(self, region1, region2):
        """Returns the inequality which divides two adjacent regions, or False if no such inequality exists."""
        for ineq1 in region1.inequalities():
            for ineq2 in region2.inequalities():
                if ineq1.A() == -1 * ineq2.A() and ineq1.b() == -1 * ineq2.b():
                    return ineq1
        return False

    def make_adjacency_digraph(self):
        """Creates the Shi adjacency digraph."""
        # Initialize the graph
        initial_graph = self.graph_of_regions
        vertices = initial_graph.vertices()
        undirected_edges = initial_graph.edges()
        directed_edges = []
        # Define the base point to be used to find the location of the base region
        base_point = vector((self.n - i) / self.n for i in range(self.n))
        # Direct the edges
        for edge in undirected_edges:
            inequality = self.boundary_inequality(edge[0], edge[1])
            if inequality.eval(base_point) >= 0:
                directed_edges.append((edge[0], edge[1]))
            else:
                directed_edges.append((edge[1], edge[0]))
        return DiGraph([vertices, directed_edges])

    def assign_edge_labels(self):
        """Finds the index that is incremented along each edge in the Shi adjacency digraph."""
        digraph = self.digraph_of_regions
        edges = digraph.edges()
        edge_labels = {}
        # Label each edge depending on the corresponding inequality
        for edge in edges:
            inequality = self.boundary_inequality(edge[0], edge[1])
            if inequality.b() == 0:
                for i in range(len(inequality.A())):
                    if inequality.A()[i] != 0:
                        edge_labels[edge] = i
                        break
            else:
                for i in range(len(inequality.A())):
                    if inequality.A()[len(inequality.A()) - 1 - i] != 0:
                        edge_labels[edge] = len(inequality.A()) - 1 - i
                        break
        return edge_labels

    def make_pak_stanley_labels(self):
        """Assigns Pak-Stanley labels to vertices of the Shi adjacency digraph."""
        digraph = self.digraph_of_regions
        unvisited_vertices = digraph.vertices()
        unvisited_vertices.remove(self.base_region)
        vertex_labels = {self.base_region: [0] * self.n}
        new_vertices = [self.base_region]
        # Assign the labels iterating outwards using a breadth-first traversal of the graph
        while unvisited_vertices:
            for vertex in new_vertices:
                new_vertices.remove(vertex)
                for edge in digraph.outgoing_edges(vertex):
                    if edge[1] in unvisited_vertices:
                        vertex_labels[edge[1]] = vertex_labels[edge[0]].copy()
                        vertex_labels[edge[1]][self.pak_stanley_edge_labels[edge]] += 1
                        new_vertices.append(edge[1])
                        unvisited_vertices.remove(edge[1])
        for vertex in vertex_labels:
            vertex_labels[vertex] = tuple(vertex_labels[vertex])
        return vertex_labels

    def make_pak_stanley_graph_of_regions(self):
        """Creates the Shi adjacency digraph with Pak-Stanley vertex labels."""

        class AdvancedVertex(tuple):
            def __init__(self, v):
                self.vertex = v

            def str(self):
                return str(self.vertex[-1])

        digraph = self.digraph_of_regions
        # Create basic graph with non-injective vertex labels
        vertices = [AdvancedVertex((vertex, self.pak_stanley_labels[vertex])) for vertex in digraph.vertices()]
        edges = [(AdvancedVertex((edge[0], self.pak_stanley_labels[edge[0]])), AdvancedVertex((edge[1], self.pak_stanley_labels[edge[1]]))) for edge in digraph.edges()]
        graph = DiGraph([vertices, edges])
        # Add edge labels
        for edge in digraph.edges():
            graph.set_edge_label(AdvancedVertex((edge[0], self.pak_stanley_labels[edge[0]])), AdvancedVertex((edge[1], self.pak_stanley_labels[edge[1]])), self.pak_stanley_edge_labels[edge])
        return graph

    def count_labels(self):
        """Counts the number of occurrences of each Pak-Stanley label."""
        labels = [tuple(label) for label in self.pak_stanley_labels]
        counter = {}
        for label in labels:
            if label in counter:
                counter[label] += 1
            else:
                counter[label] = 1
        return counter
\end{lstlisting}

\section{Computing Superstable Configurations for $P_n$}\label{subsec:app-b-the-path-graph}
In this section, we detail another proof of Theorem \ref{thm:superstable-configurations-of-P_n}. First, we establish a basic identity of Fibonacci numbers to be used later.

\begin{lemma}[Even Fibonacci Numbers]\label{lem:an-identity-for-even-fibonacci-numbers}
Let $F_n$ denote the $n$th Fibonacci number. Then, 
\begin{equation*}
    F_{2n} = 2F_{2n-2} + F_{2n-4} + F_{2n-6} + \cdots + F_4 + 2F_2.
\end{equation*}
for any positive integer $n$.
\end{lemma}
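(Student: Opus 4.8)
The plan is to establish the identity by induction on $n$, with the inductive step collapsing onto the recurrence $F_{2n} = 3F_{2n-2} - F_{2n-4}$ already proved in Lemma~\ref{lem:identities-of-fibonacci-numbers} (apply its second identity with $2n$ in place of $n$). Write
\[
R_n \;=\; 2F_{2n-2} + F_{2n-4} + F_{2n-6} + \cdots + F_4 + 2F_2
\]
for the right-hand side; the goal is to show $R_n = F_{2n}$. The first case in which the displayed sum has the stated shape is $n = 3$, so I would take $R_3 = 2F_4 + 2F_2$ as the base case and verify $F_6 = 2F_4 + 2F_2$ directly.

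For the inductive step, fix $n \ge 4$ and assume $R_{n-1} = F_{2n-2}$. Comparing $R_n$ and $R_{n-1}$ term by term, every summand of $R_{n-1}$ after its leading term $2F_{2n-4}$ also occurs in $R_n$, so
\[
R_n - R_{n-1} \;=\; \bigl(2F_{2n-2} + F_{2n-4}\bigr) - 2F_{2n-4} \;=\; 2F_{2n-2} - F_{2n-4}.
\]
Hence $R_n = R_{n-1} + 2F_{2n-2} - F_{2n-4} = F_{2n-2} + 2F_{2n-2} - F_{2n-4} = 3F_{2n-2} - F_{2n-4}$, and Lemma~\ref{lem:identities-of-fibonacci-numbers} turns the last expression into $F_{2n}$, closing the induction.

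I do not expect a genuine obstacle here; the only subtlety is the bookkeeping at the bottom of the induction, namely that the recursion $R_n = R_{n-1} + 2F_{2n-2} - F_{2n-4}$ is valid only for $n \ge 4$ (the boundary term $2F_2$ behaves slightly differently from a generic interior term $F_{2k}$), so $n = 3$ genuinely has to serve as the base case. If one preferred to sidestep Lemma~\ref{lem:identities-of-fibonacci-numbers} entirely, an alternative is to telescope $F_{2k} - F_{2k-2} = F_{2k-1}$ over $2 \le k \le n$ and combine the result with the standard evaluation $F_2 + F_4 + \cdots + F_{2m} = F_{2m+1} - 1$ of a sum of even-indexed Fibonacci numbers; but the inductive route above is the more economical choice given what is already in hand.
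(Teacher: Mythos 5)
Your proof is correct, but it takes a genuinely different route from the paper's. The paper expands directly: $F_{2n} = F_{2n-1} + F_{2n-2} = 2F_{2n-2} + F_{2n-3}$, then repeatedly splits the trailing odd-indexed term via $F_k = F_{k-1}+F_{k-2}$ until it reaches $2F_{2n-2} + F_{2n-4} + \cdots + F_4 + F_3$, and finishes with $F_3 = 2F_2$. You instead induct on $n$, compute the clean difference $R_n - R_{n-1} = 2F_{2n-2} - F_{2n-4}$, and close with the identity $F_{2n} = 3F_{2n-2} - F_{2n-4}$ from Lemma~\ref{lem:identities-of-fibonacci-numbers}. Your version buys reuse of machinery already in the paper and tidier bookkeeping (the awkward boundary term $2F_2$ is quarantined in the base case rather than appearing as the last step of a telescoping chain); the paper's version is self-contained and two lines shorter. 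One wrinkle you should be aware of, though it is a defect of the statement rather than of your argument: under the convention $F_0 = F_1 = 1$ declared in Section~4, one has $F_2 = 2$, $F_4 = 5$, $F_6 = 13$, so your base case $F_6 = 2F_4 + 2F_2$ reads $13 = 14$ and fails --- but the paper's own final step $F_3 = 2F_2$ fails for exactly the same reason ($3 \neq 4$). Both proofs, and the lemma itself, are correct under the standard indexing $F_1 = F_2 = 1$, which is what the appendix implicitly uses (e.g.\ ``$s_1 = 1 = F_2$''); so your ``verify directly'' step should be carried out in that convention.
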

\begin{proof}
This result follows by repeated application of the identity $F_n = F_{n-1} + F_{n-2}$; indeed, 
\begin{align*}
    F_{2n} &= F_{2n-1} + F_{2n-2} = 2F_{2n-2} + F_{2n-3} = 2F_{2n-2} + F_{2n-4} + F_{2n-5} \\ &= 2F_{2n-2} + F_{2n-4} + \cdots + F_4 + F_3 = 2F_{2n-2} + F_{2n-4} + \cdots + F_4 + 2F_2
\end{align*}
where the penultimate equality comes from repeated application of the recurrence relation to the smallest Fibonacci number remaining, and the final equality comes from $F_3 = 2 = 2F_2$.
\end{proof}

The essential idea of this proof is that if a sequence $\{a_n\}$ satisfies the initial condition $a_1 = F_2$ and the recurrence $a_n = 2a_{n-1} + a_{n-2} + \cdots + a_2 + 2a_1$, then necessarily we have $a_n = F_{2n}$. This fact follows from Lemma \ref{lem:an-identity-for-even-fibonacci-numbers}. 

\begin{theorem}
The number of superstable configurations of $(P_n)_\bullet$ is $F_{2n}$ for any positive integer $n$.
\end{theorem}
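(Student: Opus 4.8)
The plan is to count superstable configurations of $(P_n)_\bullet$ directly, using the Superstability Criterion (Lemma \ref{lem:Carls-criterion}), and then recognize the resulting recurrence as the one characterizing even-indexed Fibonacci numbers. Write $a_n$ for the number of superstable configurations of $(P_n)_\bullet$. By Lemma \ref{lem:Carls-criterion}, these are exactly the sequences ${\bf a}\in\{0,1,2\}^n$ in which every $2$-free block contains a $0$; equivalently, since empty blocks and all-$1$ blocks have no $0$, they are exactly the sequences of the form $B_1\,2\,B_2\,2\,\cdots\,2\,B_m$ where each $B_i$ is a nonempty $\{0,1\}$-string containing at least one $0$. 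The number of admissible blocks of length $j$ is $c_j := 2^j-1$ for $j\ge 1$.

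Next I would set up a recursion by conditioning on the first $2$-free block $B_1$. Either ${\bf a}$ contains no $2$ (so ${\bf a}=B_1$ has length $n$, contributing $c_n$ sequences), or $B_1$ has some length $j$ with $1\le j\le n-2$, is followed by a $2$, and then by an arbitrary superstable configuration of length $n-1-j$. This gives
\begin{equation*}
    a_n = c_n + \sum_{j=1}^{n-2} c_j\,a_{n-1-j}, \qquad c_j = 2^j-1.
\end{equation*}
Using $c_1=1$ and $c_k-2c_{k-1}=1$ for $k\ge 2$, forming $a_n-2a_{n-1}$ telescopes this convolution into the linear recurrence
\begin{equation*}
    a_n = 2a_{n-1} + a_{n-2} + a_{n-3} + \cdots + a_2 + 2a_1 \qquad (n\ge 3),
\end{equation*}
with base cases $a_1 = 1 = F_2$ and $a_2 = 3 = F_4$. (One can instead reach the same recurrence more combinatorially by conditioning on whether the first entry of ${\bf a}$ is $0$ or $1$ — it can never be $2$ — after introducing the auxiliary count of length-$m$ sequences in which every $2$-free block \emph{except the first one} contains a $0$; this auxiliary count turns out to equal $a_1+a_2+\cdots+a_m+a_1$.)

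Finally, I would conclude by strong induction on $n$: assuming $a_k = F_{2k}$ for all $k<n$, the linear recurrence together with Lemma \ref{lem:an-identity-for-even-fibonacci-numbers} gives
\begin{equation*}
    a_n = 2F_{2n-2} + F_{2n-4} + F_{2n-6} + \cdots + F_4 + 2F_2 = F_{2n},
\end{equation*}
which completes the proof. I expect the main obstacle to be the combinatorial bookkeeping required to produce the recurrence in exactly the shape matching the even-Fibonacci identity: one must carefully track empty $2$-free blocks, use the fact that a superstable configuration can neither begin nor end with a $2$, and (in the direct argument) handle the first block, which is "absorbed" and need not itself contain a $0$. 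Once the recurrence is pinned down, matching it to $F_{2n}$ is immediate from Lemma \ref{lem:an-identity-for-even-fibonacci-numbers}.
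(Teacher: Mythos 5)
Your proof is correct, but it takes a genuinely different route from the proof the paper gives in the main text, which is linear-algebraic: there the count is obtained as $\det(\Delta_q((P_n)_\bullet))$ via Theorem 4.2.2 of Klivans, and the tridiagonal determinant is evaluated by cofactor expansion (Lemmas \ref{lem:lemma-2} and \ref{lem:lemma-3}) together with the Fibonacci identities of Lemma \ref{lem:identities-of-fibonacci-numbers}. The paper does also include a direct combinatorial proof in Appendix \ref{subsec:app-b-the-path-graph}, and yours is closest in spirit to that one — both rest on the Superstability Criterion and both terminate in the recurrence $a_n = 2a_{n-1} + a_{n-2} + \cdots + a_2 + 2a_1$ resolved by Lemma \ref{lem:an-identity-for-even-fibonacci-numbers} — but the decompositions differ: the appendix partitions configurations by the longest superstable prefix (its ``buckets''), which yields the linear recurrence immediately, whereas you condition on the first $2$-free block to get the convolution $a_n = c_n + \sum_{j} c_j a_{n-1-j}$ with $c_j = 2^j - 1$ and then telescope via $c_k - 2c_{k-1} = 1$. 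Your convolution step is clean and arguably more mechanical (the block structure $B_1\,2\,B_2\,2\cdots 2\,B_m$ is transparent, and the generating-function content is visible), at the cost of the extra telescoping manipulation; the determinant proof buys generality (the same machinery handles $C_n$ later in the paper), while the bucket argument avoids the convolution entirely. One small caution you share with the paper: the identity of Lemma \ref{lem:an-identity-for-even-fibonacci-numbers} and the values $a_1 = 1$, $a_2 = 3$ force the convention $F_1 = F_2 = 1$, $F_4 = 3$, which clashes with the paper's stated normalization $F_0 = F_1 = 1$; your base cases are right under the convention the paper actually uses in its computations, but it is worth flagging the index shift explicitly.
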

\begin{proof}
Let $s_n$ denote the number of superstable configurations of $(P_n)_\bullet$. Then $s_1 = 1 = F_2$; the only superstable configuration on $(P_1)_\bullet$ is $(0)$. Therefore, it suffices to establish the recurrence $s_n = 2s_{n-1} + s_{n-2} + \cdots + s_2 + 2s_1.$\\ \\
For this, we will partition the collection of superstable configurations. Say that a superstable configuration ${\bf a} = (a_1,\dots,a_n)$ \textit{belongs} to bucket $i$ if $i$ is the largest non-negative integer strictly less than $n$ such that $(a_1,\dots,a_i)$ is a superstable configuration on $(P_i)_\bullet$. Then, we will show that (1) there are $2s_{n-1}$ configurations in bucket $n-1$, (2) there are $s_i$ configurations in bucket $i$ for $i \in \{1,\dots,n-2\}$, and (3) there are $s_1 = 1$ configurations in bucket $0$. This suffices to establish the recurrence relation, so we will prove these facts now:\\ \\
\textbf{(1): }Suppose that ${\bf a} = (a_1,\dots,a_n)$ is a superstable configuration belonging to bucket $n-1$. Then there are $s_{n-1}$ options for $(a_1,\dots,a_{n-1})$. By the Superstability Criterion, we cannot have $a_n = 2$ but choosing $a_n = 0$ or $a_n = 1$ results in a superstable configuration regardless of which superstable configuration is chosen for $(a_1,\dots,a_{n-1})$). Therefore, there are $2s_{n-1}$ configurations in this bucket.\\ \\
\textbf{(2): }Suppose that ${\bf a} = (a_1,\dots,a_n)$ is a superstable configuration belonging to bucket $i$ for some $i \in \{1,\dots,n-2\}$. In particular, $(a_1,\dots,a_i)$ is superstable but $(a_1,\dots,a_{i+1})$ is not. Now, by the Superstability Criterion, we must have $a_{i+1} = 2$. Then, there cannot exist any $j > i+1$ with $a_j = 2$, since then by the Superstability Criterion there would exist a $0$ between $a_{i+1}$ and $a_j$ and then $(a_1,\dots,a_{j-1})$ is a superstable configuration. Yet since $j-1 > i$, this contradicts the fact that ${\bf a}$ belongs to bucket $i$. Similarly, there cannot exist any $i+1 < j < n$ with $a_j = 0$, since then by the Superstability Criterion $(a_1,\dots,a_j)$ is a superstable configuration, a contradiction for the same reason. Thus, $a_{i+2},\dots,a_{n-1} = 1$, and the Superstability Criterion forces $a_n = 0$. In summary, superstable configurations in bucket $i$ are uniquely determined by the superstable configuration consisting of their first $i$ entries. Therefore, there are $s_i$ configurations in this bucket.\\ \\
\textbf{(3): }Suppose that ${\bf a} = (a_1,\dots,a_n)$ is a superstable configuration belonging to bucket $0$. Now, $a_1$ cannot be $2$ (since then ${\bf a}$ is not superstable), but in fact there cannot exist any $i > 1$ with $a_i = 2$. To see why, notice that by the Superstability Criterion there would exist a $0$ before $a_i$ and then $(a_1,\dots,a_{i-1})$ would be superstable, a contradiction with the fact that ${\bf a}$ belongs to bucket $0$. Thus ${\bf a}$ consists of only $0$s and $1s$. Yet then there cannot exist any $i < n$ with $a_i = 0$, as then by the Superstability Criterion $(a_1,\dots,a_i)$ is superstable, a contradiction for the same reason. On the other hand, the Superstability Criterion requires that there is at least one zero. This forces ${\bf a} = (1,\dots,1,0)$, so there is a unique such configuration as desired.
\end{proof}
\end{document}